\numberwithin{equation}{section}
\title{ \bf Stabilité des sous-algèbres biparaboliques des algèbres de Lie simples.}
\date{}
\author{Kais Ammari}
\newtheorem{theo}[subsection]{Th\'eor\`eme}
\newtheorem{pr}[subsection]{Proposition}
\newtheorem{lem}[subsection]{Lemme}
\newtheorem{defi}[subsection]{D\'efinition}
\newtheorem{co}[subsection]{Corollaire}
\newtheorem{rema}[subsection]{Remarque}
\newtheorem{conj}[subsection]{Conjecture reformulée}
\def\ind{\rm ind\,}
\def\rang{\rm rang\,}
\def\text{ \rm }
\begin{document}.
\begin{center} { $\textbf{ Stabilité des sous-algèbres biparaboliques des algèbres de Lie simples. }$ } \end{center}
\
\begin{center} { Kais Ammari } \end{center}
\
\

\begin{abstract}
Soit $\mathbb{K}$ un corps algébriquement clos de caractéristique nulle. Dans cet article, nous montrons qu'une sous-algèbre biparabolique d'une algèbre de Lie simple  est stable si et seulement si elle est quasi-réductive. Par conséquent et compte tenu des résultats de \cite{Ammari} et \cite{Ammari2}, on donne une réponse positive à l'assertion $\text{(ii)}$ de la conjecture $5.6$ de \cite{Panyushev2005}.\\
\\\textbf{Abstract}. Let $\mathbb{K}$ be an algebraically closed field of characteristic 0. In this paper, we prove the equivalence between stability and quasi-reductivity for biparabolic subalgebras of simple Lie algebras. Therefore,
we give a positive answer to the assertion $\text(ii)$ of the conjecture $(5.6)$ in \cite{Panyushev2005}.\\
\end{abstract}
\section{Introduction}
Dans toute la suite, $\mathbb{K}$ désigne un corps algébriquement clos de caractéristique nulle. Les alg\`{e}bres de Lie consid\'{e}r\'{e}es sont d\'{e}finies et de dimension finie sur $\mathbb{K}$. Soit $\mathfrak{g}$ une alg\`{e}bre de Lie alg\'{e}brique et $G$ un groupe de Lie alg\'{e}brique affine connexe d'alg\`{e}bre de Lie $\mathfrak{g}$. On munit $\mathfrak{g}^{\ast}$, l'espace dual de $\mathfrak{g}$, des actions coadjointes de $\mathfrak{g}$ et de $G$. \'{E}tant donn\'{e}e une forme lin\'{e}aire $g\in \mathfrak{g}^{\ast}$, on note $\mathfrak{g}(g)$ son stabilisateur dans $\mathfrak{g}$. On identifie $\mathfrak{g}(g)/\mathfrak{z}$, o\`{u} $\mathfrak{z}$ d\'{e}signe le centre de $\mathfrak{g}$, avec son image dans $\mathfrak{g}\mathfrak{l}(\mathfrak{g})$.
\begin{defi} Une forme lin\'{e}aire $g \in \mathfrak{g}^{\ast}$ est dite de type r\'{e}ductif si son stabilisateur dans $\mathfrak{g}$ pour la repr\'{e}sentation coadjointe, modulo $\mathfrak{z}$, est une alg\`{e}bre de Lie r\'{e}ductive dont le  centre est form\'{e} d'\'{e}l\'{e}ments semi-simples dans $\mathfrak{g}\mathfrak{l}(\mathfrak{g})$.
\end{defi}
De mani\`{e}re \'{e}quivalente, cela revient \`{a} demander que le groupe $G(g)/Z_{\mathfrak{g}}$ soit r\'{e}ductif, o\`{u} $G(g)$ d\'{e}signe  le stabilisateur de $g$  dans $G$ et $Z_{\mathfrak{g}}$  le centralisateur de $\mathfrak{g}$ dans $G$.
\begin{defi} Une alg\`{e}bre de Lie $\mathfrak{g}$ est dite quasi-r\'{e}ductive si elle poss\`{e}de  une forme lin\'{e}aire de type r\'{e}ductif $g \in \mathfrak{g}^{\ast}$.
\end{defi}
La notion de quasi-r\'{e}ductivit\'{e} a \'{e}t\'{e} introduite par Duflo dans \cite{duflo-1982} pour son importance dans la th\'{e}orie des repr\'{e}sentations. Si $\mathfrak{g}$ est réductive, $\mathfrak{g}$ est quasi-r\'{e}ductive puisque $0 \in \mathfrak{g}^{\ast}$ est de type r\'{e}ductif. Il est également vrai que les sous-algèbres de Borel d'une algèbre de Lie réductive sont quasi-réductives (Kostant \cite{kostant2011cascade}, non publi\'{e}, voir \cite{joseph1977}).  A l'exception du type $A$ ou $C$, les sous-algèbres paraboliques d'une algèbre de Lie simple ne sont pas toutes quasi-réductives (voir \cite{Panyushev2005} pour le cas classique et \cite{Anne+Karin} pour le cas exceptionnel). Dans \cite{DKT}, Duflo, Khalgui et Torasso ont caractérisé les sous-alg\`{e}bres paraboliques de $\mathfrak{s}\mathfrak{o}(n,\mathbb{K})$ qui sont quasi-r\'{e}ductives en termes de drapeaux de sous-espaces totalement isotropes stabilisés par ces algèbres. Dans \cite{Anne+Karin}, Baur et Moreau ont donné la liste des sous-algèbres paraboliques des algèbres de Lie de type exceptionnel qui sont quasi-réductives.\\

Tauvel et Yu   ont \'{e}tudi\'{e} dans \cite[ch.40]{Tauvel-Yu} une classe d'alg\`{e}bres de Lie reli\'{e}e \`{a} celle des alg\`{e}bres de Lie quasi-r\'{e}ductives:
\begin{defi} Une forme lin\'{e}aire $g \in \mathfrak{g}^{\ast}$ est dite stable s'il existe un voisinage $V$ de $g$ dans $\mathfrak{g}^{\ast}$ tel que, pour toute forme lin\'{e}aire $f\in V$, les stabilisateurs $\mathfrak{g}(g)$ et $\mathfrak{g}(f)$ soient conjugu\'{e}s par le groupe adjoint alg\'{e}brique de $\mathfrak{g}$.
\end{defi}
\begin{defi}
Une alg\`{e}bre de Lie est dite stable si elle admet une forme lin\'{e}aire stable.
\end{defi}
La notion de stabilit\'{e} a \'{e}t\'{e} introduite par Kosmann et Sternberg dans \cite{Kosmann1974}. D'après \cite{DKT}, toute algèbre de Lie quasi-réductive est stable. Par contre, il existe des alg\`{e}bres de Lie stables qui ne sont pas quasi-r\'{e}ductives (voir \cite[Exemple 2.2.9]{Ammari}).\\

Les sous-algèbres biparaboliques forment une classe intéressante (incluant la classe des sous-algèbres paraboliques et de Levi) d'algèbres de Lie non réductives. Elles sont par définition les intersections de deux sous-algèbres paraboliques de $\mathfrak{g}$ dont la somme est $\mathfrak{g}$.\\

Dans \cite{Panyushev2005}, Panyushev a étudié l'existence des formes linéaires stables pour les sous-algèbres biparaboliques d'une algèbre de Lie simple. Il a établi que lorsque $\mathfrak{q}$ est une sous-algèbre biparabolique dans $\mathfrak{s}\mathfrak{l}_{n}(\mathbb{K})$ ou $\mathfrak{s}\mathfrak{p}_{2n}(\mathbb{K})$, alors $\mathfrak{q}$ admet une forme linéaire stable dont la composante neutre de son stabilisateur est un tore. Il formule alors la conjecture suivante:\\
\\
\textbf{Conjecture de Panyushev} Soit $\mathfrak{q}$ une sous-algèbre biparabolique. Si $\mathfrak{q}$ admet une forme linéaire  stable, alors la composante neutre de son stabilisateur est un tore.\\
Une algèbre de Lie quasi-réductive admet une forme linéaire stable dont le stabilisateur est un tore modulo son centre. On peut alors reformuler la conjecture de Panyushev comme suit:\\
\begin{conj}\label{Conj}
Une sous-algèbre biparabolique admet une forme linéaire stable si et seulement si elle est quasi-réductive.
\end{conj}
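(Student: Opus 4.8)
The plan is to prove the two implications separately, the delicate one being that stability implies quasi-reductivity. The reverse implication, that every quasi-reductive biparabolic is stable, is already available: it is the theorem of Duflo--Khalgui--Torasso recalled above (\cite{DKT}), applied to a quasi-reductive $\mathfrak{q}$, so in that direction there is nothing further to prove. The whole argument therefore concentrates on the implication ``stable $\Rightarrow$ quasi-r\'eductive'', and I would aim to show directly that a stable linear form on a biparabolic subalgebra is automatically of reductive type.

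First I would set up the dictionary between the two notions through the generic stabilizer. A stable form $f$ is in particular regular, since the stabilizer dimension is locally constant and minimal at $f$, so it suffices to analyse $\mathfrak{q}(f)$ for regular $f$. Recall that for such $f$ the restriction $\chi = f|_{\mathfrak{q}(f)}$ is a character of $\mathfrak{q}(f)$, because $f$ vanishes on $[\mathfrak{q}(f),\mathfrak{q}]$. Writing the Levi decomposition of the algebraic Lie algebra $\mathfrak{q}(f) = \mathfrak{r} \ltimes \mathfrak{u}$, with $\mathfrak{r}$ reductive and $\mathfrak{u}$ its unipotent radical, the statement that $f$ is of reductive type amounts, modulo the center $\mathfrak{z}$ of $\mathfrak{q}$, to $\mathfrak{u} = 0$ together with the center of $\mathfrak{r}$ acting by semisimple operators in $\mathfrak{g}\mathfrak{l}(\mathfrak{g})$. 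The target is thus: stability of $f$ forces $\mathfrak{u}=0$ and a toral center, so that $f$ is of reductive type and $\mathfrak{q}$ is quasi-r\'eductive; in fact for a biparabolic one expects the sharper conclusion that $\mathfrak{q}(f)$ is a torus, which is exactly Panyushev's prediction.

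The main mechanism I would use exploits the parabolic grading. Writing $\mathfrak{q} = \mathfrak{p}_{1}\cap\mathfrak{p}_{2}$ with the induced grading and the associated Kostant cascade, I would describe the generic stabilizer $\mathfrak{q}(f)$ combinatorially and show that a nonzero unipotent radical $\mathfrak{u}$ cannot survive stability: concretely, one produces inside every neighbourhood of $f$ linear forms whose stabilizers are not conjugate to $\mathfrak{q}(f)$, because a discrete invariant of the pair $(\mathfrak{q}(f),\chi)$ --- typically the dimension of the unipotent radical --- jumps along a suitable deformation. The rigidity of the parabolic grading is what prevents these unipotent directions from being smoothed out, and this is precisely where the biparabolic hypothesis enters, since no soft general principle is available: there do exist stable Lie algebras that are not quasi-r\'eductive (\cite[Exemple 2.2.9]{Ammari}). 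The explicit classification of quasi-r\'eductive biparabolics in \cite{Ammari} (classical types) and \cite{Ammari2} (exceptional types) then serves both as a cross-check for this analysis and as the input needed, once the equivalence is established, to deduce the positive answer to assertion (ii) of Panyushev's conjecture.

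The hard part will be this control of the unipotent radical: turning the mere non-triviality of $\mathfrak{u}$ into an honest failure of local conjugacy of stabilizers. Since stability by itself does not force a reductive stabilizer, the deformation argument must genuinely use the structure of $\mathfrak{q}$ rather than a dimension count, and I expect it to require either a case-by-case verification organized by the classification of the non-quasi-r\'eductive biparabolics, or the isolation of a single structural invariant of the pair $(\mathfrak{p}_{1},\mathfrak{p}_{2})$ whose nonvanishing simultaneously obstructs reductivity of $\mathfrak{q}(f)$ and local constancy of the stabilizers. Pinning down that invariant, and proving that its vanishing is equivalent to the quasi-r\'eductivity condition already isolated in \cite{Ammari, Ammari2}, is the crux of the proof.
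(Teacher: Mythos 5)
There is a genuine gap, and it lies exactly where you locate ``the crux''. The easy direction and the general set-up are fine, but the mechanism you propose for ``stable $\Rightarrow$ quasi-r\'eductive'' cannot work as stated. For a regular form $f$ the stabilizer $\mathfrak{q}(f)$ is commutative (Duflo--Vergne), and on the dense open set $\mathfrak{q}^{\ast}_{treg}$ of strongly regular forms it splits as $\mathfrak{j}_{f}\oplus\mathfrak{u}_{f}$ with $\dim\mathfrak{j}_{f}=\mathrm{rang}(\mathfrak{q})$, hence $\dim\mathfrak{u}_{f}=\mathrm{ind}(\mathfrak{q})-\mathrm{rang}(\mathfrak{q})$ \emph{constant}. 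More generally, any constructible discrete invariant of the pair $(\mathfrak{q}(f),\chi)$ is constant on a dense open set, so no such invariant can ``jump along a suitable deformation'' in every neighbourhood of a generic $f$. Non-stability of a generic form is not a jump of the isomorphism type of the stabilizer; it is the fact that the conjugacy class of the subalgebra $\mathfrak{q}(f)\subset\mathfrak{q}$ moves continuously with $f$. This is precisely why stable but non-quasi-r\'eductive Lie algebras exist; citing that example does not repair your argument, it refutes the candidate invariant you single out.

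The paper detects non-stability by a different and concrete device, none of whose ingredients appear in your plan. First, a reduction to rank zero: if $\mathfrak{j}$ is a Cartan--Duflo subalgebra, then $\mathfrak{q}$ is stable iff $\mathfrak{q}^{\mathfrak{j}}$ is (Lemme \ref{rang nul}), and $\mathfrak{q}^{\mathfrak{j}}$ is, modulo its centre, a product of rank-zero biparabolics; combined with the known cases (parabolics, classical types) this leaves only the exceptional biparabolics satisfying condition $(\ast)$. Second, the algebraic stability criterion of Tauvel--Yu: $f$ is stable iff $[\mathfrak{q},\mathfrak{q}(f)]\cap\mathfrak{q}(f)=\{0\}$ (Lemme \ref{Tauvel Yu}); this intersection is exactly the infinitesimal witness of the moving stabilizer. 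Third, for each remaining algebra of rank zero and positive index, a case-by-case computation with GAP4 produces $h\in\mathfrak{h}$ and a Zariski-open set $A^{\prime}$ of generic forms such that $h.f=-f$ and $[h,x]=-x$ for all $x\in\mathfrak{q}(f)$; then $\mathfrak{q}(f)=[h,\mathfrak{q}(f)]\subset[\mathfrak{q},\mathfrak{q}(f)]$, so no generic $f$ is stable, and the same $h$ forces every element of $\mathfrak{q}(f)$ to be nilpotent, certifying rank zero. Without the rank-zero reduction, the Tauvel--Yu criterion, and a workable witness of non-stability such as this $h$, your outline does not close.
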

Dans \cite{Ammari}, on donne une réponse positive à cette conjecture pour les sous-algèbres paraboliques d'une algèbre de Lie simple orthogonale. Ceci et compte tenu des résultats de Panyushev pour les sous-algèbres paraboliques d'une algèbre de Lie simple de type $A$ ou $C$, donne une réponse positive complète à cette conjecture pour les paraboliques d'une algèbre de Lie  simple classique.\\

Dans  \cite{Ammari2}, nous avons  donné une réponse positive à cette conjecture pour les sous-algèbres paraboliques des algèbres de Lie simples exceptionnelles. Par conséquent, ceci achève la démonstration de cette conjecture pour les paraboliques d'une algèbre de Lie réductive.\\

Le but principal de cet article est d'achever la démonstration de cette conjecture. Compte tenu de ce qui précède et des résultats dans \cite{Panyushev2005} et \cite{Panyushev2017}, il suffit de traiter le cas des biparaboliques qui ne sont pas des paraboliques des algèbres de Lie simples exceptionnelles.  Soit $\mathfrak{q}=\mathfrak{q}_{\pi^{\prime}, \pi^{\prime\prime}}$ une sous-algèbre biparabolique standard d'une algèbre de Lie simple exceptionnelle (voir section \ref{s.alg.bip}). Comme  conséquence du lemme \ref{rang nul}, on ramène l'étude de stabilité des sous-algèbres biparaboliques $\mathfrak{q}$ au cas de rang nul et d'indice strictement positif. Compte tenu du lemme \ref{ty}, il nous suffit d'étudier les sous-algèbres biparaboliques $\mathfrak{q}$ vérifiant la condition $(\ast)$ suivante
$$k_{\pi^{\prime}}+k_{\pi^{\prime\prime}}=\rang(\mathfrak{g})+\dim (E_{\pi^{\prime}}\cap E_{\pi^{\prime\prime}}),\,\, k_{\pi^{\prime}}+k_{\pi^{\prime\prime}}>\rang(\mathfrak{g}) \hbox{ et } \mathcal{K}(\pi^{\prime})\cap \mathcal{K}(\pi^{\prime\prime})=\emptyset.$$
Le théorème \ref{théo pr} montre que  les sous-algèbres biparaboliques   vérifiant la condition $(\ast)$ ne sont pas quasi-réductives si et seulement si elles ne sont pas stables. Sa démonstration utilise des calculs effectués par GAP4 . Notons que GAP4 n'est pas adapté au calcul formel et ne permet pas le calcul du stabilisateur de formes linéaires génériques (voir section \ref{calculs}).

\section{Notations et rappels}
Soient $\mathfrak{g}$ une alg\`{e}bre de Lie, $\mathfrak{g}^{\ast}$ son dual. L'indice de $\mathfrak{g}$, not\'{e} $\mathrm{ind} \mathfrak{g}$, est la dimension minimale des stabilisateurs dans $\mathfrak{g}$ d'un \'{e}l\'{e}ment de $\mathfrak{g}^{\ast}$ pour l'action coadjointe. Il a \'{e}t\'{e} introduit par Dixmier dans \cite{Dixmier} pour son importance dans la th\'{e}orie des repr\'{e}sentations et la th\'{e}orie des orbites.
\begin{defi} Une forme lin\'{e}aire $g\in \mathfrak{g}^{\ast}$ est dite r\'{e}guli\`{e}re si la dimension de son stabilisateur dans $\mathfrak{g}$ pour l'action coadjointe est \'{e}gale \`{a} l'indice de $\mathfrak{g}$.
\end{defi}
\begin{rema}
Il est bien connu que l'ensemble $\mathfrak{g}^{\ast}_{reg}$ des \'{e}l\'{e}ments r\'{e}guliers de $\mathfrak{g}^{\ast}$ est un ouvert de Zariski non vide de $\mathfrak{g}^{\ast}$.
\end{rema}
\begin{defi}
Soit $\mathfrak{g}$ une alg\`{e}bre de Lie alg\'{e}brique. Une forme lin\'{e}aire $g \in \mathfrak{g}^{\ast}$ est dite fortement r\'{e}guli\`{e}re si elle est r\'{e}guli\`{e}re, auquel cas $\mathfrak{g}(g)$
est une alg\`{e}bre de Lie commutative (voir \cite{duflo-vergne-1969}), et si de plus le tore $\mathfrak{j}_{g}$, unique facteur r\'{e}ductif de $\mathfrak{g}(g)$, est de dimension maximale lorsque $g$ parcourt l'ensemble des formes  r\'{e}guli\`{e}res.
\end{defi}
Cette d\'{e}finition est due \`{a} Duflo (voir \cite{duflo-1982}).
\begin{rema}
Il est bien connu que, si $\mathbf{G}$ est un groupe de Lie algébrique d'algèbre de Lie $\mathfrak{g}$,  l'ensemble $\mathfrak{g}^{\ast}_{treg}$ des formes fortement r\'{e}guli\`{e}res  est un ouvert de Zariski $\mathbf{G}$-invariant non vide de $\mathfrak{g}^{\ast}$.\\
Les tores $\mathfrak{j}_{g}$, $g \in \mathfrak{g}^{\ast}$ sont appel\'{e}s les sous-alg\`{e}bres de Cartan-Duflo de $ \mathfrak{g}$. Ils sont deux \`{a} deux conjugu\'{e}s sous l'action du groupe adjoint connexe de $\mathfrak{g}$.
\end{rema}
\begin{defi}
Soit $\mathfrak{g}$ une alg\`{e}bre de Lie alg\'{e}brique sur $\mathbb{K}$. On appelle rang de $\mathfrak{g}$ sur $\mathbb{K}$ et on note $\mathrm{rang} (\mathfrak{g})$ la dimension commune de ses sous-alg\`{e}bres de Cartan-Duflo.
\end{defi}
Dans \cite{Tauvel-Yu2004}, on trouve une caractérisation purement algébrique des formes linéaires stables:
\begin{lem}\label{Tauvel Yu} Soit $f$ un \'{e}l\'{e}ment de $\mathfrak{g}^{\ast}$. Si $\mathfrak{g}$ est une alg\`{e}bre de Lie alg\'{e}brique, on a
$$\hbox{ f est stable si et seulement si } [\mathfrak{g},\mathfrak{g}(f)] \cap \mathfrak{g}(f)=\{0\}.$$
\end{lem}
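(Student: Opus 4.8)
The plan is to reduce the topological notion of stability to a single linear-algebra transversality condition by analysing one auxiliary map. Write $\mathfrak{a}=\mathfrak{g}(f)$ and let $G$ be the connected adjoint algebraic group. Two dual identities drive everything: since $\ad^{*}(x)f$ evaluated at $y$ equals $-f([x,y])$, the tangent space to the coadjoint orbit is $[\mathfrak{g},f]:=\ad^{*}(\mathfrak{g})f=\mathfrak{a}^{\perp}$; and $\phi$ is fixed by $\ad^{*}(\mathfrak{a})$ precisely when $\phi$ annihilates $[\mathfrak{g},\mathfrak{a}]$, i.e. $(\mathfrak{g}^{*})^{\mathfrak{a}}=[\mathfrak{g},\mathfrak{a}]^{\perp}$. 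Applying $(A\cap B)^{\perp}=A^{\perp}+B^{\perp}$ then turns the statement to be proved into the transversality condition
$$[\mathfrak{g},\mathfrak{a}]\cap\mathfrak{a}=\{0\}\iff [\mathfrak{g},f]+(\mathfrak{g}^{*})^{\mathfrak{a}}=\mathfrak{g}^{*}.$$
So it suffices to show that $f$ is stable if and only if this sum is all of $\mathfrak{g}^{*}$.

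Set $F=(\mathfrak{g}^{*})^{\mathfrak{a}}$ and $F_{0}=\{\phi\in F:\mathfrak{g}(\phi)=\mathfrak{a}\}$. For $\phi\in F$ one has $\mathfrak{a}\subseteq\mathfrak{g}(\phi)$ by definition of $F$, hence $\dim\mathfrak{g}(\phi)\ge\dim\mathfrak{a}$; combined with the upper semicontinuity of $\phi\mapsto\dim\mathfrak{g}(\phi)$, this shows $F_{0}$ is a nonempty Zariski-open subset of the affine space $F$ containing $f$. Conjugacy of stabilizers is governed entirely by $F_{0}$: a short computation gives
$$\{\,f'\in\mathfrak{g}^{*}:\mathfrak{g}(f')\ \text{is }G\text{-conjugate to }\mathfrak{a}\,\}=G\cdot F_{0},$$
the image of $\alpha\colon G\times F_{0}\to\mathfrak{g}^{*}$, $(g,\phi)\mapsto\ad^{*}(g)\phi$ (indeed $\mathfrak{g}(f')=\Ad(g)\mathfrak{a}$ forces $\Ad(g^{-1})f'\in F_{0}$, and the converse is clear). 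By the very definition of stability, $f$ is stable exactly when $G\cdot F_{0}$ is a neighbourhood of $f$.

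It remains to decide when $G\cdot F_{0}$ is a neighbourhood of $f$, and here $\alpha$ has constant rank: at $(g,\phi)$ its differential has image $\ad^{*}(g)\big(\mathfrak{g}(\phi)^{\perp}+F\big)=\ad^{*}(g)\big(\mathfrak{a}^{\perp}+F\big)$, of dimension the $g$- and $\phi$-independent number $\dim([\mathfrak{g},f]+F)$, since $\mathfrak{g}(\phi)=\mathfrak{a}$ on $F_{0}$ and $\ad^{*}(g)$ is invertible. Hence $\overline{G\cdot F_{0}}$ has dimension $\dim([\mathfrak{g},f]+F)$. If this equals $\dim\mathfrak{g}^{*}$, then $\alpha$ is submersive at $(e,f)$, hence open near that point, so $G\cdot F_{0}$ contains a neighbourhood of $f$ and $f$ is stable; if it is strictly smaller, then $G\cdot F_{0}$ lies in a proper closed subvariety whose complement is dense and meets every neighbourhood of $f$, so $f$ is not stable. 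Thus $f$ is stable iff $[\mathfrak{g},f]+F=\mathfrak{g}^{*}$, which by the first paragraph is exactly $[\mathfrak{g},\mathfrak{g}(f)]\cap\mathfrak{g}(f)=\{0\}$.

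The main obstacle is the passage, in both directions, between the geometric statement that $G\cdot F_{0}$ is a neighbourhood of $f$ and the infinitesimal one that $d\alpha_{(e,f)}$ is surjective; this is where algebraicity of $\mathfrak{g}$ enters, through the facts that submersive morphisms of varieties are open and that in characteristic zero the generic rank of $\alpha$ computes $\dim\overline{G\cdot F_{0}}$. The point that makes this tractable is that $\alpha$ has \emph{constant} rank, so the whole question is localized at $(e,f)$; this constancy rests on the equivariance of $\alpha$ together with the openness of $F_{0}$, itself a consequence of the semicontinuity of the stabilizer dimension.
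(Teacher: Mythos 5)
The paper does not prove this lemma at all: it is quoted from \cite{Tauvel-Yu2004} as a known characterisation, so there is no in-paper argument to compare against. Your proposal is a correct, self-contained proof, and it follows essentially the standard Tauvel--Yu line of reasoning: translate $[\mathfrak{g},\mathfrak{g}(f)]\cap\mathfrak{g}(f)=\{0\}$ by duality into the transversality condition $\mathfrak{g}(f)^{\perp}+(\mathfrak{g}^{\ast})^{\mathfrak{g}(f)}=\mathfrak{g}^{\ast}$, identify the locus of forms whose stabiliser is conjugate to $\mathfrak{g}(f)$ with the saturation $G\cdot F_{0}$ of the open piece $F_{0}$ of the fixed-point space $F=(\mathfrak{g}^{\ast})^{\mathfrak{g}(f)}$, and decide whether that saturation is a neighbourhood of $f$ by a constant-rank computation for $(g,\phi)\mapsto \mathrm{Ad}^{\ast}(g)\phi$. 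All the individual steps check out: the inclusion $\mathfrak{a}\subseteq\mathfrak{g}(\phi)$ for $\phi\in F$ together with semicontinuity does make $F_{0}$ open and containing $f$; the identity $\{f':\mathfrak{g}(f')\sim_{G}\mathfrak{a}\}=G\cdot F_{0}$ is exactly what reduces stability to "$G\cdot F_{0}$ is a neighbourhood of $f$"; and the dichotomy (submersive at $(e,f)$ versus image contained in a proper closed subvariety, whose complement is dense whether one works with the Zariski or the transcendental topology) correctly settles both implications. Two cosmetic points only: you write $\ad^{\ast}(g)$ for $g\in G$ where $\mathrm{Ad}^{\ast}(g)$ is meant, and it would be worth stating explicitly that the constant rank equals $\dim\bigl(\mathfrak{a}^{\perp}+F\bigr)$ and that in characteristic zero this generic rank computes $\dim\overline{G\cdot F_{0}}$ -- which you do flag in your closing paragraph. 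No gap.
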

\begin{rema}
D'après \cite{DKT}, si $\mathfrak{g}$ est quasi-réductive les formes linéaires régulières de type réductif  sont exactement les formes linéaires fortement régulières. De plus elles sont stables.
\end{rema}
Dans cette partie, on renvoie à \cite{Tauvel-Yu} et \cite{Anne+Karin} pour les concepts généraux utilisés.\\

Soient $\mathfrak{g}$ une algèbre de Lie semi-simple sur $\mathbb{K}$, $G$ son groupe adjoint, $K$ sa forme de Killing, $\mathfrak{h}$ une sous-algèbre de Cartan de $\mathfrak{g}$, et $\Delta$ le système de racines du couple $(\mathfrak{g},\mathfrak{h})$. On fixe une base $\pi$ de $\Delta$ (que l'on notera $\Delta_{\pi}$), et on désigne par $\Delta_{\pi}^{+}$ (resp. $\Delta_{\pi}^{-}$) l'ensemble des racines positives (resp. négatives) associé. Pour toute partie $\pi^{\prime}$ de $\pi$, on note $\mathbb{Z}\pi^{\prime}$ (resp. $\mathbb{N}\pi^{\prime}$) l'ensemble des combinaisons linéaires à coefficients dans $\mathbb{Z}$ (resp. dans $\mathbb{N}$) des éléments de $\pi^{\prime}$. On pose $$\Delta_{\pi^{\prime}}=\Delta_{\pi}\cap \mathbb{Z}\pi^{\prime},\,\,\Delta_{\pi^{\prime}}^{+}=\Delta_{\pi}\cap \mathbb{N}\pi^{\prime}=\Delta_{\pi}^{+}\cap \Delta_{\pi^{\prime}}.$$
Ainsi $\Delta_{\pi^{\prime}}$ est un système de racines dont $\pi^{\prime}$ est une base de racines simples et $\Delta_{\pi^{\prime}}^{+}$ est l'ensemble des racines positives correspondant.\\

Si $\alpha \in \Delta_{\pi} $, $\mathfrak{g}^{\alpha}$ est le sous-espace radiciel associé à $\alpha$. On pose $$\mathfrak{n}^{\pm}=\sum_{\alpha\in \Delta_{\pi}^{\pm}} \mathfrak{g}^{\alpha},\,\, \mathfrak{b}^{\pm}=\mathfrak{h}\oplus\mathfrak{n}^{\pm}.$$
On désigne par $h_{\alpha}$ l'unique élément de $[\mathfrak{g}^{\alpha}, \mathfrak{g}^{-\alpha}]$ tel que $\alpha(h_{\alpha})=2$. Si $\lambda \in \mathfrak{h}^{\ast}$, on écrit $\langle \lambda,\alpha^{\vee}\rangle$ pour $\lambda(h_{\alpha})$.\\

Soit $S$ une partie de $\pi$. Par récurrence sur le cardinal de $S$, on définit un sous-ensemble $\mathcal{K}(S)$ de l'ensemble des parties de $S$ de
la manière suivante (la notion de connexité est relative au diagramme de Dynkin):\\
(i) $\mathcal{K}(\emptyset)=\emptyset$.\\
(ii) Si $S_{1},S_{2}...,S_{r}$ sont les composantes connexes de $S$, on a:
$$\mathcal{K}(S)=\mathcal{K}(S_{1})\cup...\cup\mathcal{K}(S_{r})$$
(iii) Si $S$ est connexe, alors: $$\mathcal{K}(S)=\{S\}\cup \mathcal{K}(\{\alpha \in S; \langle \alpha,\varepsilon_{S}^{\vee}\rangle=0  \}),$$
  $\varepsilon_{S}$ étant la plus grande racine de $\Delta_{S}$.\\

Cette construction est due à  Kostant (voir  \cite{kostant2011cascade} et \cite{kostant2012center}). On a les résultats suivants:
\begin{lem}(voir  \cite[ch.40]{Tauvel-Yu})\\
$\text{i})$ Tout $K \in \mathcal{K}(S)$ est une partie connexe de $\pi$.\\
$\text{ii})$ Si $K, K^{\prime} \in \mathcal{K}(S)$, alors ou $K \subset K^{\prime}$, ou $K^{\prime}\subset K$, ou $K$ et $ K^{\prime}$ sont des parties disjointes de $S$ telles que $\alpha + \beta \notin \Delta_{\pi}$ si $\alpha \in \Delta_{K}$ et $\beta \in \Delta_{K^{\prime}}$.\\
$\text{iii})$  $\{\varepsilon_{K}; K \in \mathcal{K}(S)\}$ est un ensemble de racines deux à deux fortement orthogonales de $\Delta_{\pi}$.
\end{lem}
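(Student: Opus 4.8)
Je d\'{e}montrerais ces trois assertions simultan\'{e}ment par r\'{e}currence sur le cardinal de $S$, le cas $S=\emptyset$ \'{e}tant trivial. La r\'{e}currence repose sur les deux r\`{e}gles de construction: lorsque $S$ est r\'{e}union de ses composantes connexes $S_{1},\dots,S_{r}$ avec $r\geq 2$, on a $\mathcal{K}(S)=\mathcal{K}(S_{1})\cup\dots\cup\mathcal{K}(S_{r})$ et chaque $S_{i}$ est de cardinal strictement inf\'{e}rieur; lorsque $S$ est connexe, on a $\mathcal{K}(S)=\{S\}\cup\mathcal{K}(S_{0})$ avec
$$S_{0}=\{\alpha\in S;\ \langle\alpha,\varepsilon_{S}^{\vee}\rangle=0\}.$$
Ce $S_{0}$ est strictement contenu dans $S$, puisque $\langle\varepsilon_{S},\varepsilon_{S}^{\vee}\rangle=2\neq 0$ force au moins une racine simple du support de $\varepsilon_{S}$ \`{a} sortir de $S_{0}$; la vraie diminution du cardinal a donc lieu dans le cas connexe.

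L'assertion $\text{i})$ est alors imm\'{e}diate: $S$ (connexe) et, par hypoth\`{e}se de r\'{e}currence, chaque \'{e}l\'{e}ment de $\mathcal{K}(S_{0})$ sont des parties connexes de $\pi$. Pour $\text{ii})$, dans le cas connexe, les cas $K=K'=S$ et $K=S,\ K'\in\mathcal{K}(S_{0})$ donnent l'inclusion $K'\subseteq S_{0}\subseteq S$; pour $K,K'\in\mathcal{K}(S_{0})$ dans une m\^{e}me composante connexe de $S_{0}$, on invoque l'hypoth\`{e}se de r\'{e}currence. Le seul cas nouveau est celui de deux parties $K,K'$ situ\'{e}es dans des composantes connexes distinctes de $S_{0}$ (ou, dans le cas d\'{e}connexe, de $S$): elles sont alors disjointes, et pour $\alpha\in\Delta_{K},\ \beta\in\Delta_{K'}$, le support de $\alpha+\beta$ est la r\'{e}union de deux sous-diagrammes sans ar\^{e}te commune, donc non connexe; comme le support de toute racine est connexe, on a bien $\alpha+\beta\notin\Delta_{\pi}$.

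Le point central est $\text{iii})$. Par hypoth\`{e}se de r\'{e}currence (qui est pr\'{e}cis\'{e}ment l'assertion $\text{iii})$ pour $S_{0}$), les $\varepsilon_{K}$ avec $K\in\mathcal{K}(S_{0})$ sont d\'{e}j\`{a} deux \`{a} deux fortement orthogonales; il reste \`{a} voir que $\varepsilon_{S}$ est fortement orthogonale \`{a} chacune d'elles. Comme tout $K\in\mathcal{K}(S_{0})$ v\'{e}rifie $\mathrm{Supp}(\varepsilon_{K})\subseteq K\subseteq S_{0}$, on obtient $\langle\varepsilon_{K},\varepsilon_{S}^{\vee}\rangle=0$ par d\'{e}finition m\^{e}me de $S_{0}$, donc $\varepsilon_{S}$ et $\varepsilon_{K}$ sont orthogonales. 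De plus $\varepsilon_{S}+\varepsilon_{K}$ n'est pas une racine: de support contenu dans $S$ et de hauteur strictement sup\'{e}rieure \`{a} celle de $\varepsilon_{S}$, ce serait une racine positive de $\Delta_{S}$ plus haute que la plus grande, absurde. Enfin, pour deux racines orthogonales la $\varepsilon_{K}$-cha\^{\i}ne \`{a} travers $\varepsilon_{S}$ est sym\'{e}trique, de sorte que $\varepsilon_{S}+\varepsilon_{K}\notin\Delta_{\pi}$ entra\^{\i}ne $\varepsilon_{S}-\varepsilon_{K}\notin\Delta_{\pi}$: d'o\`{u} la forte orthogonalit\'{e} cherch\'{e}e.

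La seule difficult\'{e} r\'{e}side dans cette derni\`{e}re \'{e}tape de $\text{iii})$: tout l'argument repose sur la conjonction de la d\'{e}finition de $S_{0}$ (qui fournit l'orthogonalit\'{e}), de la maximalit\'{e} de $\varepsilon_{S}$ dans $\Delta_{S}$ (qui \'{e}limine $\varepsilon_{S}+\varepsilon_{K}$) et du lemme de sym\'{e}trie des cha\^{\i}nes de racines orthogonales (qui transf\`{e}re cette information \`{a} $\varepsilon_{S}-\varepsilon_{K}$). Le reste n'est que mise en place soigneuse de la r\'{e}currence.
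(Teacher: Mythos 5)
Le papier ne d\'emontre pas ce lemme~: il est simplement cit\'e d'apr\`es \cite[ch.40]{Tauvel-Yu}, de sorte qu'il n'y a pas de preuve interne \`a laquelle comparer la v\^otre. Votre r\'ecurrence sur le cardinal de $S$ est correcte et reproduit pour l'essentiel l'argument classique de la r\'ef\'erence~: la d\'ecroissance stricte $S_{0}\subsetneq S$ d\'eduite de $\langle\varepsilon_{S},\varepsilon_{S}^{\vee}\rangle=2$, l'argument de connexit\'e du support pour ii), l'orthogonalit\'e $\langle\varepsilon_{K},\varepsilon_{S}^{\vee}\rangle=0$ par d\'efinition m\^eme de $S_{0}$, l'exclusion de $\varepsilon_{S}+\varepsilon_{K}$ par maximalit\'e de $\varepsilon_{S}$ dans $\Delta_{S}$, et le transfert \`a $\varepsilon_{S}-\varepsilon_{K}$ via la sym\'etrie de la cha\^{\i}ne de racines ($p-q=\langle\varepsilon_{S},\varepsilon_{K}^{\vee}\rangle=0$) sont tous justes. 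Une seule \'etape m\'eriterait d'\^etre \'ecrite explicitement~: dans iii) vous ne traitez que le cas o\`u $S$ est connexe~; lorsque $K$ et $K^{\prime}$ proviennent de composantes connexes distinctes (de $S$ dans le cas non connexe, ou de $S_{0}$ au sein du pas de r\'ecurrence, ce dernier cas \'etant toutefois couvert par l'hypoth\`ese de r\'ecurrence appliqu\'ee \`a $S_{0}$), la forte orthogonalit\'e de $\varepsilon_{K}$ et $\varepsilon_{K^{\prime}}$ s'obtient imm\'ediatement en appliquant ii) \`a $\alpha=\varepsilon_{K}$ et $\beta=\pm\varepsilon_{K^{\prime}}$, l'orthogonalit\'e provenant de l'absence d'ar\^ete entre les deux composantes. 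C'est une omission b\'enigne~; la preuve est correcte.
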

On note  $k_{\pi^{\prime}}$ le cardinal de $\mathcal{K}(\pi^{\prime})$ pour tout $\pi^{\prime}$ inclus dans $\pi$ et on remarque que $k_{\pi}$ ne dépend que de $\mathfrak{g}$. On donne dans le tableau ci-dessous la valeur de $k_{\pi}$ pour les différents types d'algèbres de Lie simples. Si $r$ est un nombre rationnel, $[r]$ désigne sa partie entière.
\\
\begin{table}[h]
\begin{center}
\begin{tabular}{|c|c|c|c|c|c|c|c|c|c|}
\hline
& & & & & & & & & \\
$k_{\pi}$&$A_{l}$ &$B_{l}$& $C_{l}$&$D_{l}$& $E_{6}$ & $E_{7}$ & $E_{8}$ & $F_{4}$ & $G_{2}$\\
& & & & & & & & &\\
\hline
& & & & & & & & & \\
 &$[\frac{l+1}{2}]$ & $l$ & $l$ & $2[\frac{l}{2}]$ & 4 & 7 & 8 & 4 & 2 \\
& & & & & & & & & \\
\hline
\end{tabular}
\vspace{.5cm}
\caption{\label{Tkg} $k_{\pi}$ pour les différents  types d'algèbres de Lie simples.}
\end{center}
\end{table}
\\
\newpage
Pour $\pi^{\prime}$ une partie de $\pi$, on note  $\mathcal{E}_{\pi^{\prime}}$ l'ensemble des racines $\varepsilon_{K}$, avec $ K \in \mathcal{K}(\pi^{\prime})$. On décrit ci-dessous $\mathcal{E}_{\pi}$ dans le cas des algèbres de Lie simples exceptionnelles (dans les cas de type $F_{4}$, $E_{6}$, $E_{7}$ et $E_{8}$, on utilise les notations de Bourbaki \cite[pages 272, 260, 264 et 268]{Bourbaki} pour l'écriture des racines dans une base $\pi$):
$${\tiny
\begin{array}{l}
 \begin{Dynkin}
   \Dbloc{\Dtext{l}{G_2:}}
   \Dbloc{ \Dcirc \Ddoubleeast \Deast \Dtext{t}{\alpha_{1}}}
   \Dleftarrow
   \Dbloc{\Dwest \Ddoublewest \Dcirc \Dtext{t}{\alpha_{2}}}
  \end{Dynkin}\\
 \mathcal{E}_{\pi}= \{\varepsilon_{1} = 3\alpha_{1}+2\alpha_{2}, \, \varepsilon_{2} = \alpha_{1}\} \\
  \\
  \begin{Dynkin}
   \Dbloc{\Dtext{l}{F_4:}}
   \Dbloc{\Dcirc \Deast \Dtext{t}{\alpha_{1}}}
   \Dbloc{\Dwest \Dcirc \Ddoubleeast  \Dtext{t}{\alpha_{2}}}
   \Drightarrow
   \Dbloc{\Ddoublewest \Dcirc \Deast \Dtext{t}{\alpha_{3}}}
   \Dbloc{\Dwest \Dcirc \Dtext{t}{\alpha_{4}}}
  \end{Dynkin} \\
 \mathcal{E}_{\pi}=\{\varepsilon_{1} = 2342,  \varepsilon_{2} = 0122,  \varepsilon_{3}=0120,  \varepsilon_{4}=0100 \} \\
 \\
\begin{Dynkin}
\Dbloc{\Dtext{l}{E_6:}}
\Dbloc{\Dcirc \Deast\Dtext{t}{\alpha_{1}} }
 \Dbloc{\Dwest\Dcirc \Deast\Dtext{t}{\alpha_{3}} }
 \Dbloc{\Dwest \Dcirc \Dsouth \Deast\Dtext{t}{\alpha_{4}} }
 \Dbloc{\Dwest \Dcirc \Deast\Dtext{t}{\alpha_{5}}}
\Dbloc{ \Dwest \Dcirc\Dtext{t}{\alpha_{6}} }
\Dskip
\Dspace \Dspace \Dspace \Dbloc{\Dnorth \Dcirc\Dtext{l}{\alpha_{2}}}
\end{Dynkin} \\
\mathcal{E}_{\pi}=\{\varepsilon_{1}=\begin{array}{c}\\12321\\2\end{array}, \
\varepsilon_{2}=\begin{array}{c}\\11111\\0\end{array}, \
\varepsilon_{3}=\begin{array}{c}\\01110\\0\end{array}, \
\varepsilon_{4}=\begin{array}{c}\\00100\\0\end{array} \} \\
\\
\,
\begin{Dynkin}
\Dbloc{\Dtext{l}{E_7:}}
\Dbloc{\Dcirc \Deast\Dtext{t}{\alpha_{1}} }
 \Dbloc{\Dwest\Dcirc \Deast\Dtext{t}{\alpha_{3}} }
 \Dbloc{\Dwest \Dcirc \Dsouth \Deast\Dtext{t}{\alpha_{4}} }
 \Dbloc{\Dwest \Dcirc \Deast\Dtext{t}{\alpha_{5}}}
\Dbloc{ \Dwest  \Deast \Dcirc\Dtext{t}{\alpha_{6}} }
\Dbloc{ \Dwest \Dcirc\Dtext{t}{\alpha_{7}} }
\Dskip
\Dspace \Dspace \Dspace \Dbloc{\Dnorth \Dcirc\Dtext{l}{\alpha_{2}}}
\end{Dynkin}\\
\\
\mathcal{E}_{\pi}= \{ \varepsilon_{1}=\begin{array}{c}\\234321\\ \hspace{-0.15cm}2\end{array}, \
\varepsilon_{2}=\begin{array}{c}\\012221\\ \hspace{-0.15cm}1\end{array}, \
\varepsilon_{3}=\begin{array}{c}\\012100\\ \hspace{-0.15cm}1\end{array}, \
\varepsilon_{4}=\begin{array}{c}\\000001\\ \hspace{-0.15cm}0\end{array}, \
\varepsilon_{5}=\begin{array}{c}\\000000\\ \hspace{-0.15cm}1\end{array}, \
\varepsilon_{6}=\begin{array}{c}\\010000\\ \hspace{-0.15cm}0\end{array}, \
\varepsilon_{7}=\begin{array}{c}\\000100\\ \hspace{-0.15cm}0\end{array}\}\\
\\
\begin{Dynkin}
\Dbloc{\Dtext{l}{E_8:}}
\Dbloc{\Dcirc \Deast\Dtext{t}{\alpha_{1}} }
 \Dbloc{\Dwest\Dcirc \Deast\Dtext{t}{\alpha_{3}} }
 \Dbloc{\Dwest \Dcirc \Dsouth \Deast\Dtext{t}{\alpha_{4}} }
 \Dbloc{\Dwest \Dcirc \Deast\Dtext{t}{\alpha_{5}}}
\Dbloc{ \Dwest  \Deast \Dcirc\Dtext{t}{\alpha_{6}} }
\Dbloc{ \Dwest \Deast \Dcirc\Dtext{t}{\alpha_{7}} }
\Dbloc{ \Dwest \Dcirc\Dtext{t}{\alpha_{8}} }
\Dskip
\Dspace \Dspace \Dspace \Dbloc{\Dnorth \Dcirc\Dtext{l}{\alpha_{2}}}
\end{Dynkin} \\
\mathcal{E}_{\pi}=\{ \varepsilon_{1}=\begin{array}{c}\\2465432\\ \hspace{-0.25cm}3\end{array}, \
\varepsilon_{2}=\begin{array}{c}\\2343210\\ \hspace{-0.25cm}2\end{array}, \
\varepsilon_{3}=\begin{array}{c}\\0122210\\ \hspace{-0.25cm}1\end{array}, \
\varepsilon_{4}=\begin{array}{c}\\0121000\\ \hspace{-0.25cm}1\end{array}, \
\varepsilon_5=\begin{array}{c}\\0000000\\ \hspace{-0.25cm}1\end{array}, \
\varepsilon_{6}=\begin{array}{c}\\0000010\\ \hspace{-0.25cm}0\end{array},\\
\hspace{0.83cm} \varepsilon_{7}=\begin{array}{c}\\0100000\\ \hspace{-0.25cm}0\end{array}, \
\varepsilon_{8}=\begin{array}{c}\\0001000\\ \hspace{-0.25cm}0\end{array} \}\\
\end{array}
}
$$
\section{Sous-algèbres biparaboliques}\label{s.alg.bip}
On garde les notations précédentes et on désigne par $\mathfrak{p}_{\pi^{\prime}}^{+}$ la sous-algèbre de  $\mathfrak{g}$ engendrée par $ \mathfrak{b}^{+}$ et les $\mathfrak{g}^{-\alpha}$, $\alpha \in \pi^{\prime}$: c'est une sous-algèbre parabolique de $\mathfrak{g}$ et toutes les sous-algèbres paraboliques de $\mathfrak{g}$ sont à conjugaison près obtenues ainsi. On note $\mathfrak{p}_{\pi^{\prime}}^{-}$ la sous-algèbre parabolique opposée à $\mathfrak{p}_{\pi^{\prime}}^{+}$, i.e, la sous-algèbre engendrée par $\mathfrak{b}^{-}=\mathfrak{h}\oplus\mathfrak{n}^{-}$ et les $\mathfrak{g}^{\alpha}$, $\alpha \in \pi^{\prime}$.\\

Soient $\pi^{\prime}$, $\pi^{\prime\prime}$ deux parties de $\pi$. On note $\mathfrak{q}_{\pi^{\prime}, \pi^{\prime\prime}}$ l'intersection de deux sous-algèbres paraboliques $\mathfrak{p}_{\pi^{\prime}}^{+}$ et $\mathfrak{p}_{\pi^{\prime\prime}}^{-}$. La sous-algèbre $\mathfrak{q}_{\pi^{\prime}, \pi^{\prime\prime}}$ est dite sous-algèbre biparabolique associée à  $\pi^{\prime}$ et $\pi^{\prime\prime}$. Il est bien connu que toute sous-algèbre biparabolique est conjuguée à une des $\mathfrak{q}_{\pi^{\prime}, \pi^{\prime\prime}}$.\\

Si $v \in \mathfrak{g}$ et $\mathfrak{a}$ est une sous-algèbre de $\mathfrak{g}$, on note $\varphi_{v}$ la forme linéaire
sur $\mathfrak{a}$ définie par $\varphi_{v}(x)= K(v,x)$ pour tout $x \in \mathfrak{a}$. Si $\mathfrak{a}=\mathfrak{q}_{\pi^{\prime}, \pi^{\prime\prime}}$, l'application $v\mapsto \varphi_{v}$ induit un isomorphisme d'espaces vectoriels de  $\mathfrak{q}_{\pi^{\prime\prime}, \pi^{\prime}}$ sur $(\mathfrak{q}_{\pi^{\prime}, \pi^{\prime\prime}})^{\ast}$.\\
\section{Réduction au cas de rang nul}
On note $E_{\pi^{\prime},\pi^{\prime\prime}}$ (resp. $E_{\pi^{\prime}}$) le sous-espace de $\mathfrak{h}^{\ast}$ engendré par $\mathcal{E}_{\pi^{\prime}}\cup \mathcal{E}_{\pi^{\prime\prime}}$ (resp. $\mathcal{E}_{\pi^{\prime}}$). Ainsi,
  $\dim E_{\pi^{\prime},\pi^{\prime\prime}}= k_{\pi^{\prime}}+k_{\pi^{\prime\prime}}-\dim (E_{\pi^{\prime}}\cap E_{\pi^{\prime\prime}})$.\\

Soit $f \in \mathfrak{q}_{\pi^{\prime}, \pi^{\prime\prime}}^{\ast}$ définie par $$f=\sum_{K \in \mathcal{K}(\pi^{\prime\prime})} a_{K} X^{\ast}_{\varepsilon_{K}} + \sum_{L \in \mathcal{K}(\pi^{\prime})} b_{L} X^{\ast}_{-\varepsilon_{L}},$$
o\`{u} les $a_{K}$ et les $b_{L}$ sont des scalaires non nuls.\\

Il existe un ouvert de Zariski dans $\mathbb{C}^{\mathcal{K}(\pi^{\prime \prime})} \times \mathbb{C}^{\mathcal{K}(\pi^{\prime})}$ tel que pour $((a_{K})_{K \in \mathcal{K}(\pi^{\prime\prime})},(b_{L})_{L \in \mathcal{K}(\pi^{\prime})})$ dans cet ouvert  $f$ soit régulière. D'après \cite{MR2264140}, on a
\begin{equation}\ind(\mathfrak{q}_{\pi^{\prime}, \pi^{\prime\prime}})=(\rang(\mathfrak{g})- \dim E_{\pi^{\prime},\pi^{\prime\prime}})+(k_{\pi^{\prime}}+k_{\pi^{\prime\prime}}-\dim E_{\pi^{\prime},\pi^{\prime\prime}}).
\end{equation}
\begin{rema}
On a $$\ind(\mathfrak{q}_{\pi^{\prime}, \pi^{\prime\prime}})={0} \hbox{ si et seulement si }\,\,  E_{\pi^{\prime}}\cap E_{\pi^{\prime\prime}}=\{0\} \hbox{ et }\,\, k_{\pi^{\prime}}+k_{\pi^{\prime\prime}}=\rang(\mathfrak{g}).$$
\end{rema}
\begin{lem}\label{ty}(voir \cite{Tauvel-Yu2005})
Soit $f$ comme précédemment. Alors,  $\mathfrak{q}_{\pi^{\prime}, \pi^{\prime\prime}}(f)$ contient une sous-algèbre commutative, formée d'éléments semi-simples, et de dimension: $$\rang(\mathfrak{g})- \dim E_{\pi^{\prime},\pi^{\prime\prime}}+card(\mathcal{K}(\pi^{\prime})\cap \mathcal{K}(\pi^{\prime\prime})).$$
\end{lem}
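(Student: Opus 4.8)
The plan is to construct explicitly, inside $\mathfrak{q}_{\pi',\pi''}(f)$, a commutative subalgebra consisting of semisimple elements and having the announced dimension; it will be the sum of a toral piece sitting in $\mathfrak{h}$ and of one semisimple element for each $K\in\mathcal{K}(\pi')\cap\mathcal{K}(\pi'')$. First I would write $\mathfrak{q}_{\pi',\pi''}=\mathfrak{h}\oplus\bigoplus_{\alpha\in\Delta_{\pi''}^{+}}\mathfrak{g}^{\alpha}\oplus\bigoplus_{\alpha\in\Delta_{\pi'}^{+}}\mathfrak{g}^{-\alpha}$ and, using the isomorphism $v\mapsto\varphi_{v}$ of $\mathfrak{q}_{\pi'',\pi'}$ onto $(\mathfrak{q}_{\pi',\pi''})^{\ast}$, identify $f$ (after an irrelevant rescaling of the nonzero scalars) with $\varphi_{v}$ where $v=\sum_{K\in\mathcal{K}(\pi'')}a_{K}X_{-\varepsilon_{K}}+\sum_{L\in\mathcal{K}(\pi')}b_{L}X_{\varepsilon_{L}}$, each $X_{\pm\varepsilon}$ spanning $\mathfrak{g}^{\pm\varepsilon}$. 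By invariance of the Killing form, $f([x,y])=-K([x,v],y)$ for $x,y\in\mathfrak{q}_{\pi',\pi''}$, so any $x\in\mathfrak{q}_{\pi',\pi''}$ with $[x,v]=0$ automatically belongs to $\mathfrak{q}_{\pi',\pi''}(f)$; this reduces everything to producing commuting semisimple elements annihilating $v$.

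For the toral piece, put $\mathfrak{h}_{0}=\{h\in\mathfrak{h}:\beta(h)=0\ \text{ for every }\beta\in\mathcal{E}_{\pi'}\cup\mathcal{E}_{\pi''}\}$. Since $[h,X_{\pm\varepsilon_{K}}]=\pm\varepsilon_{K}(h)X_{\pm\varepsilon_{K}}$, each $h\in\mathfrak{h}_{0}$ kills $v$, whence $\mathfrak{h}_{0}\subseteq\mathfrak{q}_{\pi',\pi''}(f)$; it is a torus, and as the annihilator in $\mathfrak{h}$ of the span $E_{\pi',\pi''}$ of $\mathcal{E}_{\pi'}\cup\mathcal{E}_{\pi''}$ it has dimension $\dim\mathfrak{h}-\dim E_{\pi',\pi''}=\rang(\mathfrak{g})-\dim E_{\pi',\pi''}$.

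For the remaining directions, to each $K\in\mathcal{K}(\pi')\cap\mathcal{K}(\pi'')$ I would attach $s_{K}=b_{K}X_{\varepsilon_{K}}+a_{K}X_{-\varepsilon_{K}}$, which lies in $\mathfrak{q}_{\pi',\pi''}$ exactly because $K$ belongs to both cascades. The computation of $[s_{K},v]$ is the crux, and it is driven entirely by the strong orthogonality of the family $\{\varepsilon_{K}\}$ recalled above: applied within $\mathcal{K}(\pi'')$ it annihilates every $[X_{\varepsilon_{K}},X_{-\varepsilon_{K'}}]$ with $K'\neq K$, and applied within $\mathcal{K}(\pi')$ it annihilates every $[X_{\pm\varepsilon_{K}},X_{\varepsilon_{L}}]$ with $L\neq K$; the surviving terms collapse to a multiple of $h_{\varepsilon_{K}}$ that vanishes thanks to the symmetric choice of coefficients, giving $[s_{K},v]=0$ and $s_{K}\in\mathfrak{q}_{\pi',\pi''}(f)$. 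Because $a_{K}b_{K}\neq0$, the element $s_{K}$ is conjugate, under the $SL_{2}$ attached to the triple $(X_{\varepsilon_{K}},h_{\varepsilon_{K}},X_{-\varepsilon_{K}})$, to a nonzero multiple of $h_{\varepsilon_{K}}$, hence is semisimple in $\mathfrak{g}$. This is the precise spot where the hypothesis $K\in\mathcal{K}(\pi')\cap\mathcal{K}(\pi'')$ is indispensable: it is what makes both root vectors $X_{\pm\varepsilon_{K}}$ available, a single one producing only a nilpotent.

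It then remains to verify that $\mathfrak{h}_{0}+\sum_{K\in\mathcal{K}(\pi')\cap\mathcal{K}(\pi'')}\mathbb{K}\,s_{K}$ is the sought subalgebra. All pairwise brackets vanish: $[h,s_{K}]=\varepsilon_{K}(h)(b_{K}X_{\varepsilon_{K}}-a_{K}X_{-\varepsilon_{K}})=0$ for $h\in\mathfrak{h}_{0}$ because $\varepsilon_{K}\in\mathcal{E}_{\pi'}\cup\mathcal{E}_{\pi''}$, while $[s_{K},s_{K'}]=0$ for $K\neq K'$ by strong orthogonality again. We thus have a commuting family of semisimple elements, and any linear combination of commuting semisimple elements is semisimple, so the span is a torus made of semisimple elements. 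Finally, reading off the distinct components in the root spaces $\mathfrak{g}^{\varepsilon_{K}}$ shows the $s_{K}$ are independent modulo $\mathfrak{h}$, giving total dimension $\rang(\mathfrak{g})-\dim E_{\pi',\pi''}+\mathrm{card}(\mathcal{K}(\pi')\cap\mathcal{K}(\pi''))$, as claimed. The main obstacle is the bracket computation of the third step: the root vectors and the identification of $f$ must be arranged so that strong orthogonality applies separately inside each cascade, and one must confirm that precisely the unwanted terms cancel.
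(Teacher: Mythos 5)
The paper does not prove this lemma: it is quoted from Tauvel--Yu (\cite{Tauvel-Yu2005}) with no argument given, so there is nothing internal to compare your proof against. On its own merits your construction is correct and is the natural one. The decomposition $\mathfrak{q}_{\pi',\pi''}=\mathfrak{h}\oplus\bigoplus_{\alpha\in\Delta^{+}_{\pi''}}\mathfrak{g}^{\alpha}\oplus\bigoplus_{\alpha\in\Delta^{+}_{\pi'}}\mathfrak{g}^{-\alpha}$ is the right one, the reduction of ``$x$ stabilise $f$'' to ``$[x,v]\perp_{K}\mathfrak{q}$'' (hence to $[x,v]=0$ as a sufficient condition) is valid by invariance of the Killing form, and the count $\dim\mathfrak{h}_{0}=\rang(\mathfrak{g})-\dim E_{\pi',\pi''}$ is exactly the codimension of the annihilator of $E_{\pi',\pi''}$ in $\mathfrak{h}$. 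Two points deserve the precision you partly supply. First, once the coefficients are renamed so that $v=\sum_{K\in\mathcal{K}(\pi'')}a_{K}X_{-\varepsilon_{K}}+\sum_{L\in\mathcal{K}(\pi')}b_{L}X_{\varepsilon_{L}}$, your $s_{K}=b_{K}X_{\varepsilon_{K}}+a_{K}X_{-\varepsilon_{K}}$ is precisely the $K$-component of $v$, so $[s_{K},v]=[s_{K},s_{K}]=0$ once strong orthogonality (applied inside $\mathcal{E}_{\pi''}$ for the negative part of $v$ and inside $\mathcal{E}_{\pi'}$ for the positive part, which is legitimate since $K$ lies in both cascades) has killed all cross terms; there is no residual multiple of $h_{\varepsilon_{K}}$ to cancel, which makes the ``symmetric choice of coefficients'' remark superfluous rather than wrong. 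Second, the semisimplicity of $s_{K}$ (an element $bX_{\varepsilon_{K}}+aX_{-\varepsilon_{K}}$ with $ab\neq0$ is $\mathfrak{sl}_{2}$-conjugate to a nonzero multiple of $h_{\varepsilon_{K}}$ over the algebraically closed field $\mathbb{K}$, and abstract and adjoint semisimplicity agree in $\mathfrak{g}$) together with pairwise commutativity gives a subalgebra of commuting semisimple elements of the announced dimension, since the $\varepsilon_{K}$ are pairwise distinct and the $s_{K}$ are independent modulo $\mathfrak{h}$. This is indeed the point where $K\in\mathcal{K}(\pi')\cap\mathcal{K}(\pi'')$ is essential, as you observe.
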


La proposition et le lemme qui suivent permettent de ramener  l'étude de la stabilité des sous-algèbres biparaboliques au cas de rang nul.\\
\begin{pr}
Soient $\mathfrak{g}$ une algèbre de Lie simple, $\mathfrak{q}$ une sous-algèbre biparabolique de rang non nul de $\mathfrak{g}$, $\mathfrak{j}$ une sous-algèbre de Cartan-Duflo de $\mathfrak{q}$ et $\mathfrak{h}$ une sous-algèbre de Cartan de $\mathfrak{g}$ contenant $\mathfrak{j}$ et contenues dans $\mathfrak{q}$. Alors, on a
$$\mathfrak{q}^{\mathfrak{j}}=\mathfrak{z}_{\Delta^{\mathfrak{j}}}\oplus\mathfrak{q}^{\mathfrak{j},1},$$
 o\`{u} $\mathfrak{z}_{\Delta^{\mathfrak{j}}}$ est le centre de $\mathfrak{q}^{\mathfrak{j}}$  et $\mathfrak{q}^{\mathfrak{j},1}$ est un produit direct de sous-algèbres biparaboliques de rang nul.
\end{pr}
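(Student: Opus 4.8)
The plan is to realise $\mathfrak{q}^{\mathfrak{j}}$ as a biparabolic subalgebra of the reductive centraliser $\mathfrak{m}:=\mathfrak{g}^{\mathfrak{j}}$, to break it up along the simple ideals of $\mathfrak{m}$, and then to use that $\mathfrak{j}$ is une sous-alg\`ebre de Cartan-Duflo to force every piece to have rank zero.

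First I would record the structure of $\mathfrak{m}$. Since $\mathfrak{j}\subseteq\mathfrak{h}\subseteq\mathfrak{q}$ is a torus of semisimple elements, its centraliser $\mathfrak{m}=\mathfrak{g}^{\mathfrak{j}}$ is reductive, contains $\mathfrak{h}$, and has root system $\Delta^{\mathfrak{j}}=\{\alpha\in\Delta_{\pi}:\alpha|_{\mathfrak{j}}=0\}$; its centre is $\mathfrak{z}_{\Delta^{\mathfrak{j}}}=\{h\in\mathfrak{h}:\alpha(h)=0\ \forall\alpha\in\Delta^{\mathfrak{j}}\}$, which plainly contains $\mathfrak{j}$. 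As $\mathfrak{j}$ acts semisimply, $\mathfrak{q}^{\mathfrak{j}}=\mathfrak{q}\cap\mathfrak{m}=\mathfrak{h}\oplus\bigoplus_{\alpha}\mathfrak{g}^{\alpha}$, the sum ranging over the roots of $\mathfrak{q}$ that vanish on $\mathfrak{j}$. Taking $\pi^{\mathfrak{j}}$ to be the basis of $\Delta^{\mathfrak{j},+}:=\Delta^{\mathfrak{j}}\cap\Delta_{\pi}^{+}$, I would check that $\mathfrak{p}_{\pi'}^{+}\cap\mathfrak{m}$ contains the Borel $\mathfrak{b}^{+}\cap\mathfrak{m}$ of $\mathfrak{m}$, hence is the standard parabolic of $\mathfrak{m}$ whose negative simple roots are exactly the $\beta\in\pi^{\mathfrak{j}}$ with $\mathfrak{g}^{-\beta}\subseteq\mathfrak{p}_{\pi'}^{+}$, i.e. those in $\pi^{\mathfrak{j}}\cap\Delta_{\pi'}$; symmetrically for $\mathfrak{p}_{\pi''}^{-}\cap\mathfrak{m}$. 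Intersecting, $\mathfrak{q}^{\mathfrak{j}}$ is the biparabolic of $\mathfrak{m}$ attached to $\sigma'=\pi^{\mathfrak{j}}\cap\Delta_{\pi'}$ and $\sigma''=\pi^{\mathfrak{j}}\cap\Delta_{\pi''}$.

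Next I would use the decomposition $\mathfrak{m}=\mathfrak{z}_{\Delta^{\mathfrak{j}}}\oplus\bigoplus_{i}\mathfrak{m}_{i}$ into its centre and the simple ideals $\mathfrak{m}_{i}$ attached to the connected components $C_{i}$ of $\pi^{\mathfrak{j}}$. Since each factor contains the corresponding part of $\mathfrak{h}$, the biparabolic splits accordingly, $\mathfrak{q}^{\mathfrak{j}}=\mathfrak{z}_{\Delta^{\mathfrak{j}}}\oplus\bigoplus_{i}\mathfrak{q}_{i}$, where $\mathfrak{q}_{i}=\mathfrak{q}^{\mathfrak{j}}\cap\mathfrak{m}_{i}$ is the biparabolic of the simple algebra $\mathfrak{m}_{i}$ attached to $\sigma'\cap C_{i}$ and $\sigma''\cap C_{i}$. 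As the rank is additive over direct sums of ideals and a torus has rank equal to its dimension, this yields $\rang(\mathfrak{q}^{\mathfrak{j}})=\dim\mathfrak{z}_{\Delta^{\mathfrak{j}}}+\sum_{i}\rang(\mathfrak{q}_{i})$.

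The decisive input, and the step I expect to be the main obstacle, is the rank equality $\rang(\mathfrak{q}^{\mathfrak{j}})=\dim\mathfrak{j}$. This is where the Cartan-Duflo hypothesis is genuinely used, and it is the content of the reduction Lemma~\ref{rang nul}: taking $f$ fortement r\'eguli\`ere with $\mathfrak{j}$ the reductive part of $\mathfrak{q}(f)$, the inclusion $\mathfrak{j}\subseteq\mathfrak{q}(f)$ means $f$ is fixed by $\mathfrak{j}$, and since $\mathfrak{q}(f)$ is commutative it lies in $\mathfrak{q}^{\mathfrak{j}}$; a Jacobi-identity computation then gives $\mathfrak{q}(f)=\mathfrak{q}^{\mathfrak{j}}(\bar{f})$ for $\bar{f}=f|_{\mathfrak{q}^{\mathfrak{j}}}$, whence $\rang(\mathfrak{q}^{\mathfrak{j}})=\dim\mathfrak{j}$. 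Granting this, and since $\mathfrak{j}\subseteq\mathfrak{z}_{\Delta^{\mathfrak{j}}}$ gives $\dim\mathfrak{z}_{\Delta^{\mathfrak{j}}}\ge\dim\mathfrak{j}$, the additivity formula forces $\sum_{i}\rang(\mathfrak{q}_{i})=0$ and $\dim\mathfrak{z}_{\Delta^{\mathfrak{j}}}=\dim\mathfrak{j}$; in particular every $\mathfrak{q}_{i}$ has rank zero. To finish, I would observe that the centre of each $\mathfrak{q}_{i}$ is a toral subalgebra of $\mathfrak{h}\cap\mathfrak{m}_{i}$ lying in every coadjoint stabiliser, hence in a sous-alg\`ebre de Cartan-Duflo of $\mathfrak{q}_{i}$, which is $\{0\}$ by rank zero; so this centre vanishes. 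Therefore the centre of $\mathfrak{q}^{\mathfrak{j}}$ is precisely $\mathfrak{z}_{\Delta^{\mathfrak{j}}}$, and $\mathfrak{q}^{\mathfrak{j}}=\mathfrak{z}_{\Delta^{\mathfrak{j}}}\oplus\mathfrak{q}^{\mathfrak{j},1}$ with $\mathfrak{q}^{\mathfrak{j},1}=\bigoplus_{i}\mathfrak{q}_{i}$ a direct product of sous-alg\`ebres biparaboliques de rang nul, as required.
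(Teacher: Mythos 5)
Your proof is correct and follows essentially the same route as the paper: decompose $\mathfrak{g}^{\mathfrak{j}}=\mathfrak{z}_{\Delta^{\mathfrak{j}}}\oplus\bigoplus_i\mathfrak{g}_i$ into its centre and simple ideals, and intersect with $\mathfrak{q}$ using the common Cartan $\mathfrak{h}$. You in fact supply the justifications the paper leaves implicit, namely that $\mathfrak{q}^{\mathfrak{j}}$ is a biparabolic of $\mathfrak{g}^{\mathfrak{j}}$, that the factors $\mathfrak{q}_i$ have rank zero (via $\rang(\mathfrak{q}^{\mathfrak{j}})=\dim\mathfrak{j}$ and additivity of the rank), and that $\mathfrak{z}_{\Delta^{\mathfrak{j}}}$ is the centre of $\mathfrak{q}^{\mathfrak{j}}$ itself and not merely of $\mathfrak{g}^{\mathfrak{j}}$.
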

 \begin{proof}
 Soient $\mathfrak{g}$ une algèbre de Lie simple, $\mathfrak{q}$ une sous-algèbre biparabolique de rang non nul de $\mathfrak{g}$, $\mathfrak{j}$ une sous-algèbre de Cartan-Duflo de $\mathfrak{q}$ et $\mathfrak{h}$ une sous-algèbre de Cartan de $\mathfrak{g}$ contenant $\mathfrak{j}$ et contenues dans $\mathfrak{q}$. On a $\dim\mathfrak{j}>0$ et $\mathfrak{j}\subset \mathfrak{h}\subset\mathfrak{q}^{\mathfrak{j}}\subset \mathfrak{g}^{\mathfrak{j}}$ et
$\mathfrak{g}^{\mathfrak{j}}=\mathfrak{h}\oplus (\bigoplus_{\alpha \in \Delta, \alpha|_{\mathfrak{j}}=0}\mathfrak{g}_{\alpha})$.\\
Soit $\mathfrak{z}_{\Delta^{\mathfrak{j}}}=\{\mathfrak{h}\cap(\cap_{\alpha|_{\mathfrak{j}}=0} \ker\alpha)\}$ qui n'est autre que le centre de
$\mathfrak{g}^{\mathfrak{j}}$. Ainsi, on a $$\mathfrak{g}^{\mathfrak{j}}=\mathfrak{z}_{\Delta^{\mathfrak{j}}}\oplus \mathfrak{g}_{1}\oplus
\mathfrak{g}_{2}\oplus...\oplus\mathfrak{g}_{k},$$
o\`{u} les $\mathfrak{g}_{i}$, pour $1\leq i \leq k$, sont des algèbres de Lie simples puisque l'algèbre de Lie $\mathfrak{g}^{\mathfrak{j}}$ est réductive. Comme chacune des sous-algèbres de Lie $\mathfrak{q}^{\mathfrak{j},1}$ et $\mathfrak{g}_{i}$, $1\leq i \leq k$ est somme directe de son intersection avec $\mathfrak{h}$ et des sous-espaces radiciels sous l'action de $\mathfrak{h}$ qu'elle contient, on a
$\mathfrak{q}^{\mathfrak{j}}=\mathfrak{z}_{\Delta^{\mathfrak{j}}}\oplus\mathfrak{q}^{\mathfrak{j},1}$ avec $\mathfrak{q}^{\mathfrak{j},1}$ produit direct des sous-algèbres biparaboliques $\mathfrak{q}_{l}^{\mathfrak{j}}=\mathfrak{q}^{\mathfrak{j}}\cap \mathfrak{g}_{l}$ des $\mathfrak{g}_{l}$, $1\leq l \leq k$ qui sont de rang nul.
\end{proof}
\begin{lem}\label{rang nul}(voir\cite{Ammari})
Soit $\mathfrak{g}$ une algèbre de Lie algébrique, $\mathfrak{j}$ une sous-algèbre de Cartan-Duflo de $\mathfrak{g}$ et   $\mathfrak{g}^{\mathfrak{j}}$ le centralisateur de $\mathfrak{j}$ dans $\mathfrak{g}$. Alors
$$\mathfrak{g} \hbox{ est stable si et seulement si } \mathfrak{g}^{\mathfrak{j}} \hbox{ est stable  }.$$
\end{lem}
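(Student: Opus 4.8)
The plan is to reduce the statement to the algebraic criterion for stability of a single linear form (Lemme \ref{Tauvel Yu}) applied to a strongly regular form, and to exploit the weight decomposition of $\mathfrak{g}$ under the torus $\mathfrak{j}$. Since the sous-alg\`ebres de Cartan-Duflo of $\mathfrak{g}$ are pairwise conjugate under the adjoint group and stability is invariant under automorphisms, I may assume $\mathfrak{j}=\mathfrak{j}_{g}$ for a strongly regular $g\in\mathfrak{g}^{\ast}$. As $\mathfrak{j}$ is an algebraic torus, $\mathfrak{g}^{\mathfrak{j}}$ is an algebraic subalgebra and $\mathfrak{g}$ decomposes into $\mathfrak{j}$-weight spaces $\mathfrak{g}=\mathfrak{g}^{\mathfrak{j}}\oplus\mathfrak{m}$, where $\mathfrak{m}=\bigoplus_{\lambda\neq 0}\mathfrak{g}_{\lambda}$ and $[\mathfrak{g}^{\mathfrak{j}},\mathfrak{g}_{\lambda}]\subseteq\mathfrak{g}_{\lambda}$, $[\mathfrak{m},\mathfrak{g}^{\mathfrak{j}}]\subseteq\mathfrak{m}$. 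Because $\mathfrak{m}$ carries only nonzero weights, restriction $g\mapsto g|_{\mathfrak{g}^{\mathfrak{j}}}$ identifies the $\mathfrak{j}$-invariant forms on $\mathfrak{g}$ with $(\mathfrak{g}^{\mathfrak{j}})^{\ast}$.

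The first key step is an identity of stabilizers. For $g$ strongly regular, $\mathfrak{g}(g)$ is commutative and contains $\mathfrak{j}=\mathfrak{j}_{g}$, so $[\mathfrak{j},\mathfrak{g}(g)]=0$ and therefore $\mathfrak{g}(g)\subseteq\mathfrak{g}^{\mathfrak{j}}$; moreover $\mathfrak{j}\subseteq\mathfrak{g}(g)$ means exactly that $g$ is $\mathfrak{j}$-invariant, hence $g|_{\mathfrak{m}}=0$. Setting $\bar g:=g|_{\mathfrak{g}^{\mathfrak{j}}}$ and using $[x,\mathfrak{m}]\subseteq\mathfrak{m}$ for $x\in\mathfrak{g}^{\mathfrak{j}}$ together with $g|_{\mathfrak{m}}=0$, one gets $g([x,\mathfrak{g}])=\bar g([x,\mathfrak{g}^{\mathfrak{j}}])$ for every $x\in\mathfrak{g}^{\mathfrak{j}}$, whence $\mathfrak{g}(g)=\mathfrak{g}^{\mathfrak{j}}(\bar g)$.

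The second step is to compare the intersections appearing in the Tauvel-Yu criterion. Since $\mathfrak{g}(g)\subseteq\mathfrak{g}^{\mathfrak{j}}=\mathfrak{g}_{0}$, the bracket $[\mathfrak{g},\mathfrak{g}(g)]$ splits into $[\mathfrak{g}^{\mathfrak{j}},\mathfrak{g}(g)]\subseteq\mathfrak{g}^{\mathfrak{j}}$ and $[\mathfrak{m},\mathfrak{g}(g)]\subseteq\mathfrak{m}$; as $\mathfrak{g}^{\mathfrak{j}}\cap\mathfrak{m}=0$ and $\mathfrak{g}(g)\subseteq\mathfrak{g}^{\mathfrak{j}}$, intersecting with $\mathfrak{g}(g)$ annihilates the $\mathfrak{m}$-component and yields $[\mathfrak{g},\mathfrak{g}(g)]\cap\mathfrak{g}(g)=[\mathfrak{g}^{\mathfrak{j}},\mathfrak{g}^{\mathfrak{j}}(\bar g)]\cap\mathfrak{g}^{\mathfrak{j}}(\bar g)$. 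Applying Lemme \ref{Tauvel Yu} once in $\mathfrak{g}$ and once in the algebraic Lie algebra $\mathfrak{g}^{\mathfrak{j}}$, I conclude that $g$ is stable on $\mathfrak{g}$ if and only if $\bar g$ is stable on $\mathfrak{g}^{\mathfrak{j}}$.

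It remains to pass from forms to algebras. For the implication ``$\mathfrak{g}$ stable $\Rightarrow\mathfrak{g}^{\mathfrak{j}}$ stable'', the stable forms form a nonempty open set and the strongly regular forms a nonempty Zariski-dense open set, so their intersection is nonempty; choosing $g$ in it and conjugating so that $\mathfrak{j}_{g}=\mathfrak{j}$, the previous step produces a stable form $\bar g$ on $\mathfrak{g}^{\mathfrak{j}}$. The converse requires lifting a strongly regular stable form $\phi$ on $\mathfrak{g}^{\mathfrak{j}}$ to a strongly regular form $g$ on $\mathfrak{g}$ with $g|_{\mathfrak{g}^{\mathfrak{j}}}=\phi$; since any $\mathfrak{j}$-invariant form vanishes on $\mathfrak{m}$, the only candidate is the zero-extension, and the main obstacle is to show that this extension remains strongly regular, i.e. that its stabilizer acquires no component in $\mathfrak{m}$ (the pairings $\mathfrak{g}_{\lambda}\times\mathfrak{g}_{-\lambda}\to\mathbb{K}$, $(x,y)\mapsto\phi([x,y])$, must have no extra left radical). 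This amounts to the index reduction $\ind\mathfrak{g}=\ind\mathfrak{g}^{\mathfrak{j}}$ together with the matching of the reductive factors, which is precisely the structural information on sous-alg\`ebres de Cartan-Duflo established in \cite{Ammari}; granting it, $\phi=g|_{\mathfrak{g}^{\mathfrak{j}}}$ for a strongly regular $g$, and the equivalence of the preceding step transfers the stability of $\phi$ to $g$, so $\mathfrak{g}$ is stable.
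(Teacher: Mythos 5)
The paper does not actually prove this lemma: it is quoted from \cite{Ammari} with no argument supplied here, so your proposal can only be judged on its own terms. Your overall strategy --- reduce to the Tauvel--Yu criterion (Lemme \ref{Tauvel Yu}) for a strongly regular form and exploit the $\mathfrak{j}$-weight decomposition $\mathfrak{g}=\mathfrak{g}^{\mathfrak{j}}\oplus\mathfrak{m}$ --- is the standard one for this statement, and the two central identities you establish, namely $\mathfrak{g}(g)=\mathfrak{g}^{\mathfrak{j}}(\bar g)$ and $[\mathfrak{g},\mathfrak{g}(g)]\cap\mathfrak{g}(g)=[\mathfrak{g}^{\mathfrak{j}},\mathfrak{g}^{\mathfrak{j}}(\bar g)]\cap\mathfrak{g}^{\mathfrak{j}}(\bar g)$, are correct, as is the forward implication (a nonempty open set of stable forms must meet the dense open set $\mathfrak{g}^{\ast}_{treg}$).

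The converse is where you leave a genuine gap: you reduce it to the existence of a strongly regular $g\in\mathfrak{g}^{\ast}$ restricting to a prescribed generic stable form on $\mathfrak{g}^{\mathfrak{j}}$, and then ``grant'' this by appealing to \cite{Ammari} --- which is circular here, since the lemma under proof is precisely the statement attributed to \cite{Ammari}. The missing step does not, however, require the index identity or a matching of reductive factors as an input; it follows from a short density argument. The $\mathfrak{j}$-invariant forms are exactly $\mathfrak{m}^{\perp}=[\mathfrak{j},\mathfrak{g}]^{\perp}$, a linear subspace of $\mathfrak{g}^{\ast}$ identified with $(\mathfrak{g}^{\mathfrak{j}})^{\ast}$ by restriction. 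Conjugating any strongly regular form so that its Cartan--Duflo torus becomes $\mathfrak{j}$ produces an element of $\mathfrak{g}^{\ast}_{treg}\cap\mathfrak{m}^{\perp}$; this set is therefore a nonempty open, hence dense, subset of the irreducible variety $\mathfrak{m}^{\perp}$, and its image $T\subset(\mathfrak{g}^{\mathfrak{j}})^{\ast}$ under restriction is dense open. If $\mathfrak{g}^{\mathfrak{j}}$ is stable, the nonempty open set of its stable forms meets $T$; for $h$ in the intersection the zero-extension $\tilde h$ is strongly regular, your two identities apply, and Lemme \ref{Tauvel Yu} yields the stability of $\tilde h$, hence of $\mathfrak{g}$. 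With this paragraph inserted your proof closes, and the relation $\mathrm{ind}\,\mathfrak{g}=\mathrm{ind}\,\mathfrak{g}^{\mathfrak{j}}$ you invoke becomes a consequence rather than a hypothesis.
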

\begin{rema}
 D'après ce qui précède et pour donner une réponse positive à la conjecture \ref{Conj}, il suffit de traiter le cas des sous-algèbres biparaboliques  qui sont de rang nul. Pour ce qui concerne les sous-algèbres biparaboliques des algèbres de Lie simples classiques de type $A$, $B$ ou $C$, le problème se ramène au cas des sous-algèbres paraboliques (voir \cite{Panyushev2005}). Dans le cas de type $D$, il y a deux types de sous-algèbres biparaboliques. D'une part celles qui stabilisent une paire de drapeaux opposés de sous-espaces totalement isotropes, pour lesquelles le procédé de réduction de Panyushev ramène encore le problème au cas des sous-algèbres paraboliques. D'autre part celles  qui stabilisent une paire de drapeaux croisés de sous-espaces totalement isotropes, lesquelles  sont toutes quasi-réductives (voir \cite{Panyushev2017} ou \cite{Djebali}). Enfin dans \cite{Ammari} et \cite{Ammari2}, on trouve une réponse positive à cette conjecture pour le cas des sous-algèbres paraboliques d'une algèbre de Lie simple. Par conséquent, il suffit de traiter le cas des sous-algèbres biparaboliques d'une algèbre de Lie simple exceptionnelle qui ne sont pas des paraboliques et qui sont de rang nul pour donner une réponse complète à la conjecture \ref{Conj}.\\

\end{rema}
\section{Stabilité des sous-algèbres biparaboliques $\mathfrak{q}_{\pi^{\prime}, \pi^{\prime\prime}}$}
Soit $\mathfrak{g}$ une algèbre de Lie simple de type exceptionel, $\mathfrak{h}\subset \mathfrak{g}$ une sous-algèbre de Cartan et $\pi$ une base de racines simples du système de racines de $\mathfrak{h}$ dans $\mathfrak{g}$. Soit $\mathfrak{q}=\mathfrak{q}_{\pi^{\prime}, \pi^{\prime\prime}}$ une sous-algèbre biparabolique standard de $\mathfrak{g}$, o\`{u} $\pi^{\prime}$ et $\pi^{\prime\prime}$ sont des parties de $\pi$.\\
Compte tenu du lemme \ref{ty}, si $\mathfrak{q}$ est de rang nul, on a $$k_{\pi^{\prime}}+k_{\pi^{\prime\prime}}=\rang(\mathfrak{g})+\dim (E_{\pi^{\prime}}\cap E_{\pi^{\prime\prime}}) \hbox{ et } \mathcal{K}(\pi^{\prime})\cap \mathcal{K}(\pi^{\prime\prime})=\emptyset$$
au quel cas $$\ind(\mathfrak{q})=k_{\pi^{\prime}}+k_{\pi^{\prime\prime}}-\rang(\mathfrak{g}).$$
Par suite, il nous suffit d'étudier les sous-algèbres biparaboliques  vérifiant la condition $(\ast)$ suivante $$k_{\pi^{\prime}}+k_{\pi^{\prime\prime}}=\rang(\mathfrak{g})+\dim (E_{\pi^{\prime}}\cap E_{\pi^{\prime\prime}}),\,\, k_{\pi^{\prime}}+k_{\pi^{\prime\prime}}>\rang(\mathfrak{g}) \hbox{ et } \mathcal{K}(\pi^{\prime})\cap \mathcal{K}(\pi^{\prime\prime})=\emptyset.$$
Désormais, on suppose que $\mathfrak{q}$ vérifie la condition $(\ast)$ et on désigne par $A$ le sous-espace vectoriel de $\mathfrak{q}^{\ast}$ engendré par $\{X^{\ast}_{\xi}, \xi \in \mathcal{E}_{\pi^{\prime\prime}}\}\cup \{X^{\ast}_{-\xi}, \xi \in \mathcal{E}_{\pi^{\prime}}\}$. Alors $$\dim(A)=k_{\pi^{\prime}}+k_{\pi^{\prime\prime}}\geq \ind(\mathfrak{q}).$$
\begin{pr}
Il existe $f \in A$ tel que $A+\mathfrak{q}.f=\mathfrak{q}^{\ast}$.
\end{pr}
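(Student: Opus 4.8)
The plan is to reformulate the statement as a transversality of $A$ to a coadjoint orbit, and then to reduce it by a dimension count to an intersection being as small as possible. For any $f$ the annihilator in $\mathfrak{q}$ of the orbit tangent space $\mathfrak{q}.f$ is exactly the stabilizer $\mathfrak{q}(f)$, so taking annihilators shows that $A+\mathfrak{q}.f=\mathfrak{q}^{\ast}$ is equivalent to $A^{\circ}\cap\mathfrak{q}(f)=\{0\}$, where $A^{\circ}$ is the annihilator of $A$ in $\mathfrak{q}$; concretely $A^{\circ}=\mathfrak{h}\oplus\bigoplus_{\gamma}\mathfrak{g}^{\gamma}$, the sum running over the roots $\gamma$ of $\mathfrak{q}$ not lying in $\mathcal{E}_{\pi''}\cup(-\mathcal{E}_{\pi'})$. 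Equivalently, the assertion says that the morphism $Q\times A\to\mathfrak{q}^{\ast}$, $(g,a)\mapsto\mathrm{Ad}^{\ast}(g)a$, is dominant, i.e. $\overline{Q\cdot A}=\mathfrak{q}^{\ast}$, where $Q$ is the adjoint group of $\mathfrak{q}$. Since full rank of $(x,a)\mapsto x.f+a$ is a Zariski-open condition on $f\in A$, it suffices to exhibit one suitable $f$, and I would take the regular form $f=\sum_{K\in\mathcal{K}(\pi'')}a_{K}X^{\ast}_{\varepsilon_{K}}+\sum_{L\in\mathcal{K}(\pi')}b_{L}X^{\ast}_{-\varepsilon_{L}}$ with generic nonzero scalars, which is regular by the discussion preceding the Lemme \ref{ty}, so that $\dim\mathfrak{q}(f)=\ind(\mathfrak{q})=k_{\pi'}+k_{\pi''}-\rang(\mathfrak{g})$.

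The key elementary observation is that $\mathfrak{h}.f\subseteq A$: for $h\in\mathfrak{h}$ one finds $h.f=-\sum_{K}\varepsilon_{K}(h)a_{K}X^{\ast}_{\varepsilon_{K}}+\sum_{L}\varepsilon_{L}(h)b_{L}X^{\ast}_{-\varepsilon_{L}}$, and moreover $h\in\mathfrak{q}(f)$ if and only if $h$ lies in $\bigcap_{\xi\in\mathcal{E}_{\pi'}\cup\mathcal{E}_{\pi''}}\ker\xi$. Under $(\ast)$ one has $\dim E_{\pi',\pi''}=k_{\pi'}+k_{\pi''}-\dim(E_{\pi'}\cap E_{\pi''})=\rang(\mathfrak{g})$, hence $\mathfrak{h}\cap\mathfrak{q}(f)=\{0\}$, $\dim(\mathfrak{h}.f)=\rang(\mathfrak{g})$, and $\mathfrak{h}+\mathfrak{q}(f)$ is a direct sum of dimension $\rang(\mathfrak{g})+\ind(\mathfrak{q})=k_{\pi'}+k_{\pi''}=\dim A$. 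A dimension count then shows that $A+\mathfrak{q}.f=\mathfrak{q}^{\ast}$ is equivalent to $\dim(A\cap\mathfrak{q}.f)=\rang(\mathfrak{g})$; since $\mathfrak{h}.f\subseteq A\cap\mathfrak{q}.f$ already has dimension $\rang(\mathfrak{g})$, this amounts to the equality $A\cap\mathfrak{q}.f=\mathfrak{h}.f$. Taking preimages under the surjection $x\mapsto x.f$ onto its image, this is exactly the statement $\{x\in\mathfrak{q}:x.f\in A\}=\mathfrak{h}+\mathfrak{q}(f)$, the inclusion $\supseteq$ being clear.

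It remains to prove this last equality, which is the heart of the matter. Writing $x=h_{x}+\sum_{\delta}x_{\delta}X_{\delta}$ over the roots $\delta$ of $\mathfrak{q}$, the condition $x.f\in A$ splits into the vanishing of the $\mathfrak{h}^{\ast}$-component of $x.f$ together with the vanishing of its components along the non-cascade root directions of $\mathfrak{q}$. The $\mathfrak{h}^{\ast}$-component is the linear form $\sum_{K}a_{K}x_{\varepsilon_{K}}\varepsilon_{K}-\sum_{L}b_{L}x_{-\varepsilon_{L}}\varepsilon_{L}$ on $\mathfrak{h}$, whose vanishing encodes precisely the linear dependences among $\mathcal{E}_{\pi'}\cup\mathcal{E}_{\pi''}$ detected by $E_{\pi'}\cap E_{\pi''}$; the remaining conditions read $\sum_{\delta\,:\,\delta+\gamma\in\mathcal{E}_{\pi''}\cup(-\mathcal{E}_{\pi'})}x_{\delta}\,f([X_{\delta},X_{\gamma}])=0$ for each non-cascade root $\gamma$ of $\mathfrak{q}$. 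I expect the main obstacle to be showing that these two families of linear constraints force $x\in\mathfrak{h}+\mathfrak{q}(f)$, i.e. that no genuine root-vector combination can feed into $A$ without already stabilizing $f$ modulo $\mathfrak{h}$.

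The decisive inputs for this step are the structural properties of the Kostant cascade recalled in the Lemme on $\mathcal{K}(S)$: the members of $\mathcal{E}_{\pi'}$, respectively $\mathcal{E}_{\pi''}$, are pairwise strongly orthogonal; any two members of $\mathcal{K}$ are either nested or have roots that cannot be added; and, crucially under $(\ast)$, $\mathcal{K}(\pi')\cap\mathcal{K}(\pi'')=\emptyset$, so that no cascade direction is shared between the two sides. These properties should be exploited by induction following the recursive construction of $\mathcal{K}$ from the highest root $\varepsilon_{S}$ down to the subsystem $\{\alpha:\langle\alpha,\varepsilon_{S}^{\vee}\rangle=0\}$, peeling off the top cascade roots and propagating the vanishing conditions to the lower ones. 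Carrying this induction through, and thereby identifying $\{x:x.f\in A\}$ with $\mathfrak{h}+\mathfrak{q}(f)$, completes the proof; the genericity of the scalars $a_{K},b_{L}$ is used only to guarantee that the nonzero structure constants $f([X_{\delta},X_{\gamma}])$ do not accidentally conspire to enlarge the solution space.
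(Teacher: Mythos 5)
Your chain of reductions is correct and, as far as it goes, cleaner than what the paper offers: the equivalence $A+\mathfrak{q}.f=\mathfrak{q}^{\ast}\Leftrightarrow A^{\circ}\cap\mathfrak{q}(f)=\{0\}$, the observation that $\mathfrak{h}.f\subseteq A$ with $\dim(\mathfrak{h}.f)=\rang(\mathfrak{g})$ under $(\ast)$ (because $E_{\pi',\pi''}=\mathfrak{h}^{\ast}$), and the dimension count reducing everything to the single equality $\{x\in\mathfrak{q}:x.f\in A\}=\mathfrak{h}+\mathfrak{q}(f)$ are all sound, granting the regularity of the chosen $f$, which the paper does assert for generic coefficients. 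But you have not proved the proposition: the last equality, which you yourself identify as ``the heart of the matter,'' is only announced. The passage ``I expect the main obstacle to be\dots,'' ``these properties should be exploited by induction\dots,'' ``carrying this induction through\dots completes the proof'' is a plan, not an argument. Since every step before it is an equivalence, nothing has actually been gained over the original statement except a reformulation; the entire content of the proposition still sits in the unexecuted induction on the Kostant cascade, and you give no indication of how the strong-orthogonality and nesting properties of $\mathcal{K}(\pi')$ and $\mathcal{K}(\pi'')$ would force the vanishing of the root-vector coefficients of an $x$ with $x.f\in A$. It is far from clear that this induction goes through uniformly, and the burden of proof is on you to carry it out.

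For comparison, the paper's own proof is purely computational: it picks a specific $f\in A$ (with explicit numerical coefficients) and verifies $A+\mathfrak{q}.f=\mathfrak{q}^{\ast}$ with GAP4, separately for each biparabolic in the finite list of those satisfying $(\ast)$; no uniform structural argument is given there either. So your approach is genuinely different in spirit and would be a real improvement if completed --- it would replace a case-by-case machine verification by a conceptual proof --- but in its present state it establishes strictly less than the paper's one-line computational proof, because the decisive verification is missing. Either carry out the cascade induction in detail, or fall back on exhibiting and checking an explicit $f$ as the paper does.
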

\begin{proof}
On choisit $f \in A$ et on utilise GAP pour montrer  le résultat pour cet élément.
\end{proof}

\begin{co}
Les ensembles  $\mathfrak{q}^{\ast}_{reg}\cap A$ et $\mathfrak{q}^{\ast}_{treg}\cap A$ sont des ouverts de Zariski non vides de A.
\end{co}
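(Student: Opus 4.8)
The plan is to deduce both assertions from the preceding proposition by a transversality-and-density argument, the openness part being essentially automatic.

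First I would record that, by the Remarks of Section 2, both $\mathfrak{q}^{\ast}_{reg}$ and $\mathfrak{q}^{\ast}_{treg}$ are non-empty Zariski-open subsets of $\mathfrak{q}^{\ast}$ that are invariant under the coadjoint action of an algebraic group $Q$ with Lie algebra $\mathfrak{q}$. Since $A$ is a linear subspace of $\mathfrak{q}^{\ast}$, the traces $\mathfrak{q}^{\ast}_{reg}\cap A$ and $\mathfrak{q}^{\ast}_{treg}\cap A$ are then Zariski-open in $A$ for the induced topology; hence the entire content of the statement reduces to their non-emptiness.

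For non-emptiness I would exploit the transversality furnished by the proposition. Fix $f\in A$ with $A+\mathfrak{q}.f=\mathfrak{q}^{\ast}$ and consider the action morphism $\Theta\colon Q\times A\to\mathfrak{q}^{\ast}$, $(q,a)\mapsto \Ad^{\ast}(q)a$. Its differential at $(e,f)$ sends $(\xi,v)\in\mathfrak{q}\times A$ to $\xi.f+v$, so its image is exactly $\mathfrak{q}.f+A=\mathfrak{q}^{\ast}$. Thus $\Theta$ is submersive at $(e,f)$ and therefore dominant, so its image contains a non-empty Zariski-open subset $V$ of $\mathfrak{q}^{\ast}$ contained in $Q\cdot A$. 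Since $\mathfrak{q}^{\ast}$ is irreducible and $\mathfrak{q}^{\ast}_{reg}$ (resp. $\mathfrak{q}^{\ast}_{treg}$) is a non-empty open subset, it meets $V$; choosing $\Ad^{\ast}(q)a$ in the intersection with $a\in A$ and using the $Q$-invariance of the regular (resp. strongly regular) locus, the form $a=\Ad^{\ast}(q^{-1})\bigl(\Ad^{\ast}(q)a\bigr)$ is itself regular (resp. strongly regular). Hence $a\in\mathfrak{q}^{\ast}_{reg}\cap A$ (resp. $a\in\mathfrak{q}^{\ast}_{treg}\cap A$), which settles non-emptiness in both cases.

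I do not expect a genuine obstacle at this stage: all the real work sits in the preceding proposition, namely the GAP verification that some $f\in A$ satisfies $A+\mathfrak{q}.f=\mathfrak{q}^{\ast}$. The only points needing a little care are the $Q$-invariance of both loci and the passage from dominance of $\Theta$ to the existence of a point of the regular (resp. strongly regular) locus lying in $A$ itself, which is precisely where $Q$-invariance is used; note that for the strongly regular locus one could alternatively invoke that $\mathfrak{q}$ is of rank zero under $(\ast)$, so that $\mathfrak{q}^{\ast}_{treg}$ and $\mathfrak{q}^{\ast}_{reg}$ coincide.
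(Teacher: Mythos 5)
Votre argument est correct et correspond exactement \`a la d\'emonstration (laiss\'ee implicite) du texte : le corollaire se d\'eduit de la proposition pr\'ec\'edente en observant que le morphisme d'action $Q\times A\to\mathfrak{q}^{\ast}$ est dominant d\`es que $A+\mathfrak{q}.f=\mathfrak{q}^{\ast}$, puis en utilisant l'invariance de $\mathfrak{q}^{\ast}_{reg}$ et de $\mathfrak{q}^{\ast}_{treg}$ sous l'action coadjointe pour ramener dans $A$ un point r\'egulier (resp. fortement r\'egulier). Seule votre remarque finale est \`a supprimer : la condition $(\ast)$ n'entra\^{\i}ne pas que $\mathfrak{q}$ soit de rang nul (le papier signale explicitement des biparaboliques v\'erifiant $(\ast)$ de rang non nul), donc on ne peut pas identifier $\mathfrak{q}^{\ast}_{treg}$ \`a $\mathfrak{q}^{\ast}_{reg}$ par cette voie; cela n'affecte toutefois pas l'argument principal.
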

On veut montrer que l'algèbre de Lie $\mathfrak{q}$ n'est pas stable, si elle n'est pas quasi-réductive. On sait qu'il suffit de le faire lorsque $\mathfrak{q}$ est de rang nul et d'indice non nul, donc lorsqu'elle vérifie la condition $(\ast)$. Cependant, il y a des algèbres biparaboliques qui vérifient la condition $(\ast)$ et qui ne sont pas de rang nul: il faut donc les éliminer de la liste de celles à traiter. Pour ce faire on procède ainsi: on choisit $f \in A$ au hasard, puis à l'aide de GAP4 on calcule son stabilisateur $\mathfrak{q}(f)$ et on vérifie qu'elle est régulière, c'est à dire que $\dim(\mathfrak{q}(f))= \ind(\mathfrak{q})$. On calcule ensuite l'orthogonal $\mathfrak{q}(f)^{\perp_{K}}$ de $\mathfrak{q}(f)$ pour la forme de Killing $K$ de $\mathfrak{g}$, puis son intersection $\mathfrak{q}(f)^{\perp_{K}}\cap \mathfrak{q}(f)$ avec $\mathfrak{q}(f)$. Si ce dernier espace est nul, la forme linéaire $f$ est de type réductif et donc le rang de $\mathfrak{q}$ est strictement positif et on peut l'éliminer. Notons que parmi les sous-algèbres biparaboliques qui restent,  il y en a   qui ne sont pas de rang nul (voir la section $\ref{GAP})$.\\

Après avoir appliqué ce procédé, on constate   que les algèbres restantes sont toutes d'indice 1, 2 ou 3. On désigne $\mathcal{L}$ leur liste.\\

Par ailleurs, on a le résultat suivant
\begin{pr}
Pour la sous-algèbre biparabolique $\mathfrak{q}$ vérifiant la condition $(\ast)$, les assertions suivantes sont équivalentes:\\
(i) $\mathfrak{q}$ n'est pas stable.\\
(ii) Il existe  un ouvert de Zariski  $A^{\prime}$ de $A$  tel que, pour tout $f \in A^{\prime}$, $[\mathfrak{q},\mathfrak{q}(f)]\cap \mathfrak{q}(f)\neq \{0\}$.
\end{pr}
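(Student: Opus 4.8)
The plan is to recast stability through the Tauvel--Yu criterion (Lemma \ref{Tauvel Yu}) and then transport the relevant open condition from all of $\mathfrak{q}^{\ast}$ onto the affine slice $A$, using the transversality established in the preceding proposition. Set
$$\mathcal{S}=\{f\in\mathfrak{q}^{\ast} : [\mathfrak{q},\mathfrak{q}(f)]\cap\mathfrak{q}(f)=\{0\}\},$$
so that by Lemma \ref{Tauvel Yu} the set $\mathcal{S}$ is precisely the set of stable linear forms on $\mathfrak{q}$, and $\mathfrak{q}$ is stable if and only if $\mathcal{S}\neq\emptyset$. I would first record two structural facts about $\mathcal{S}$: it is a (possibly empty) Zariski open subset of $\mathfrak{q}^{\ast}$, and it is invariant under the connected adjoint algebraic group $G$ of $\mathfrak{q}$. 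Openness follows from the very definition of a stable form, since a stable form has an entire neighbourhood of forms whose stabilizers are conjugate, hence themselves stable; $G$-invariance is immediate, as the coadjoint action preserves conjugacy classes of stabilizers.

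The core step is the claim that $\mathcal{S}\neq\emptyset$ if and only if $\mathcal{S}\cap A\neq\emptyset$. One implication is trivial. For the other, let $f_{0}\in A$ be a form with $A+\mathfrak{q}.f_{0}=\mathfrak{q}^{\ast}$, as provided by the preceding proposition, and consider the action map $\mu\colon G\times A\to\mathfrak{q}^{\ast}$, $(g,f)\mapsto \mathrm{Ad}^{\ast}(g)f$. At the point $(e,f_{0})$ its differential sends $(x,a)$ to $x.f_{0}+a$, whose image is $\mathfrak{q}.f_{0}+A=\mathfrak{q}^{\ast}$; since $G\times A$ is irreducible and we are in characteristic zero, $\mu$ is therefore dominant, so $G\cdot A$ contains a dense Zariski open subset of $\mathfrak{q}^{\ast}$. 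If now $\mathcal{S}\neq\emptyset$, then $\mathcal{S}$ is dense, hence meets $G\cdot A$, and the $G$-invariance of $\mathcal{S}$ pushes a stable form back into $A$, giving $\mathcal{S}\cap A\neq\emptyset$.

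Granting the claim, the equivalence is a short argument on the irreducible affine space $A$. Since $\mathcal{S}\cap A$ is Zariski open in $A$, it is either empty or dense. If it is empty, then by the claim $\mathfrak{q}$ is not stable, and taking $A^{\prime}=A$ yields $[\mathfrak{q},\mathfrak{q}(f)]\cap\mathfrak{q}(f)\neq\{0\}$ for every $f\in A^{\prime}$, so assertion (ii) holds. Conversely, if $\mathcal{S}\cap A$ is dense, then every nonempty Zariski open $A^{\prime}\subset A$ meets it, so some $f\in A^{\prime}$ satisfies $[\mathfrak{q},\mathfrak{q}(f)]\cap\mathfrak{q}(f)=\{0\}$; thus (ii) fails, and by the claim this is exactly the case where $\mathfrak{q}$ is stable, i.e. (i) fails. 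Hence (i) and (ii) are equivalent.

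I expect the transversality step to be the main obstacle: one must argue carefully that the single pointwise identity $A+\mathfrak{q}.f_{0}=\mathfrak{q}^{\ast}$ upgrades to dominance of the global action map $\mu$, and then combine density of $G\cdot A$ with the $G$-invariance and openness of $\mathcal{S}$ in order to descend a stable form into $A$. The supporting facts---that $\mathcal{S}$ is open and $G$-invariant, and that $A$ is irreducible with $\mathcal{S}\cap A$ open in it---are standard but load-bearing, since the entire equivalence rests on the dichotomy ``$\mathcal{S}\cap A$ empty or dense'' and hence must be stated with care.
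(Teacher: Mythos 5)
The paper states this proposition with no proof at all (no \emph{proof} environment follows it), so there is nothing internal to compare your argument against; what can be said is that your proof is correct and is visibly the intended one, since it uses exactly the two ingredients the paper has just put in place: the Tauvel--Yu criterion (Lemme \ref{Tauvel Yu}), which identifies the set $\mathcal{S}$ of stable forms with $\{f:[\mathfrak{q},\mathfrak{q}(f)]\cap\mathfrak{q}(f)=\{0\}\}$, and the preceding proposition $A+\mathfrak{q}.f_{0}=\mathfrak{q}^{\ast}$, which is precisely what makes the slice $A$ see the generic behaviour of $\mathfrak{q}^{\ast}$ (the paper's corollary on $\mathfrak{q}^{\ast}_{reg}\cap A$ and $\mathfrak{q}^{\ast}_{treg}\cap A$ is obtained by the same dominance mechanism). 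Your structural facts are all sound: $\mathcal{S}$ is open because a witness neighbourhood for a stable form is a witness for every form in it, $G$-invariance is immediate, surjectivity of $d\mu_{(e,f_{0})}$ plus irreducibility of $G\times A$ gives dominance in characteristic zero (lower semicontinuity of the rank together with generic smoothness of the closure of the image), and density of $G\cdot A$ combined with openness and $G$-invariance of $\mathcal{S}$ descends a stable form into $A$; the final dichotomy on the irreducible space $A$ then closes both implications. The only point worth flagging is that assertion (ii) must be read with $A^{\prime}$ a \emph{nonempty} Zariski open subset of $A$ (otherwise (ii) is vacuously true), which you use implicitly in the converse direction; with that reading your argument is complete and in fact supplies a proof the paper omits.
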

Compte tenu de ce qui précède, pour montrer qu'une sous-algèbre biparabolique d'une algèbre de Lie simple de type exceptionnel  n'est pas stable, il suffit de montrer qu'il existe $h\in \mathfrak{h}$ et $A^{\prime}$  ouvert de Zariski de $A$ tel que
$$\hbox{ pour tout } f \in A^{\prime}, \hbox{ il existe } X \in \mathfrak{q}(f)  \hbox{ différent de 0 tel que } h.f=-f \hbox{ et } h.X=-X$$
Ce résultat est établi dans la section $\ref{GAP}$ pour les sous-algèbres biparaboliques de la liste $\mathcal{L}$.\\

Le résultat suivant est le but principal de ce travail
\begin{theo}\label{théo pr}
Soit $\mathfrak{q}=\mathfrak{q}_{\pi^{\prime}, \pi^{\prime\prime}}$ une sous-algèbre biparabolique d'une algèbre de Lie simple exceptionnelle qui vérifie la condition $(\ast)$. Alors les deux conditions suivantes sont équivalentes:\\
(i) $\rang(\mathfrak{q})=0$ et $\ind(\mathfrak{q})>0$.\\
(ii) Il existe  $A^{\prime}\subset A$ un ouvert de Zariski  et $h \in \mathfrak{h}$ tels que  $ \forall f \in A^{\prime}$,  $h.f=-f$ et $[h,x]=-x$, $\forall x \in \mathfrak{q}(f)$.
\end{theo}
\begin{proof}
$(ii)\Rightarrow (i)$ Supposons qu'il existe un élément $h\in \mathfrak{h}$ et un ouvert de Zariski $A^{\prime}$ de $A$ tels que $h.f=-f$, $\forall f \in A^{\prime}$ et $[h,x]=-x$, $\forall x \in \mathfrak{q}(f)$. Supposons de plus qu'il existe un élément $X\in \mathfrak{q}(f)$ semi-simple. Soient $\lambda$ une valeur propre de $adX$, $V_{\lambda}$ le sous-espace propre correspondant de  $adX$ dans $\mathfrak{q}$ et  $v \in V_{\lambda}$. On a $$h.X.v=[h,X].v+ X.h.v=-X.v+\lambda X.v=(\lambda-1) X.v $$
ainsi et compte tenu de ce qui précède, l'endomorphisme $X$ envoie le sous-espace $V_{\lambda}$ dans $V_{\lambda-1}$. Par suite, l'endomorphisme $X$ est nilpotent. Ceci contredit le fait que $X$ est semi-simple et donc le rang de $\mathfrak{q}$ est nul. D'autre part et comme $\mathfrak{q}$ est non stable, $\mathfrak{q}$ est non quasi-réductive. Ceci implique que l'indice de $\mathfrak{q}$ est strictement positif.\\
$(i)\Rightarrow (ii)$ Soit $\mathfrak{q}=\mathfrak{q}_{\pi^{\prime}, \pi^{\prime\prime}}$ une sous-algèbre biparabolique d'une algèbre de Lie simple exceptionnelle qui vérifie la condition $(\ast)$.  Par construction et en utilisant GAP4 (voir ci-après), on vérifie que si $\mathfrak{q}$ est de rang nul et d'indice strictement positif il existe $h \in \mathfrak{h}$  et $A^{\prime}$ ouvert de Zariski dans $A$ tel que $h.f=-f$, $\forall f \in A^{\prime}$ et $[h,x]=-x$, $\forall x \in \mathfrak{q}(f)$.
\end{proof}
\begin{rema}
 Parmi les sous-algèbres $\mathfrak{q}_{\pi^{\prime}, \pi^{\prime\prime}}$ qui vérifient la condition $(\ast)$, il en existe qui:\\
-  sont quasi-réductives.\\
- qui sont de rang nul et d'indice strictement positif.\\
- qui sont de rang non nul et non quasi-réductives.
\end{rema}

\section{Présentation des calculs avec GAP4}\label{GAP}
Soit $\mathfrak{g}$ une algèbre de Lie simple exceptionnelle et $\mathfrak{q}=\mathfrak{q}_{\pi^{\prime}, \pi^{\prime\prime}}$ une sous-algèbre biparabolique de $\mathfrak{g}$ vérifiant la condition $(\ast)$. Grâce à GAP4, étant donnée une forme linéaire $f \in \mathfrak{q}^{\ast}$, on peut effectuer le calcul du stabilisateur $\mathfrak{q}(f)$  afin de déterminer si $f$ est régulière. Dans ce cas, le calcul à l'aide de GAP4  de  l'intersection  de $\mathfrak{q}(f)$ avec son orthogonal pour la forme de Killing  permet de déterminer la dimension de son facteur réductif.\\

La démarche générale est la suivante: on définit l'algèbre de Lie $\textbf{L}$ dans laquelle on veut travailler grâce à la commande "SimpleLieAlgebra", on définit un système de racines "RootSystem", un système de racines positives correspondant "PositiveRoots"  puis des systèmes de vecteurs positifs et négatifs
associés "PositiveRootVectors" et  "NegativeRootVectors". La commande "CanonicalGenerators" donne une base de la sous-algèbre de Cartan. On peut désormais
faire des calculs dans l'algèbre de Lie $\textbf{L}$.\\

Pour définir la sous-algèbre biparabolique $P:=\mathfrak{q}_{\pi^{\prime}, \pi^{\prime\prime}}$ et ses générateurs, on utilise les commandes "Concatenation" et "Subalgebra". La commande "List(Basis(P))" donne une base de la sous-algèbre biparabolique $P$, que l'on note $bP$. Ensuite, on définit l'élément $v=\sum_{K \in  \mathcal{K}(\pi^{\prime})} a_{K} X_{\varepsilon_{K}}+\sum_{L \in  \mathcal{K}(\pi^{\prime\prime})} b_{L} X_{-\varepsilon_{L}}$ de $\mathfrak{q}_{\pi^{\prime\prime}, \pi^{\prime}}$ et on note $f \in P^{\ast}$ la forme linéaire qui lui est associée ($f=\sum_{K \in \mathcal{K}(\pi^{\prime\prime})} a_{K} \alpha(K) X^{\ast}_{\varepsilon_{K}} + \sum_{L \in \mathcal{K}(\pi^{\prime})} b_{L} \alpha(L) X^{\ast}_{-\varepsilon_{L}}$, o\`{u} pour toute partie connexe $K$ de $\pi$, $\alpha(K)=Kappa(X_{\varepsilon_{K}},X_{-\varepsilon_{K}})$, Kappa  désignant la forme de Killing sur $\textbf{L}$). Notons ici que GAP4 n'est pas adapté au calcul formel (il faut donner donc des valeurs pour les $a_{K}$ et les $b_{L}$). Les étapes suivantes permettent de calculer le stabilisateur de $f$ dans $P$: On calcule tout d'abord l'espace $V$ engendré par les crochets $v \ast bP[i]$, pour $i=1,...,dp$, $dp$ désigne la dimension de la sous-algèbre biparabolique  $P$. On obtient l'orthogonal $K$ de l'espace $V$ pour la forme de Killing avec la commande "KappaPerp". Ainsi, le stabilisateur de $f$ dans  $P$, que l'on note $S$, n'est autre que l'intersection de $K$ et $P$. On vérifie que $f$ est régulière (si $\dim(S)=\ind(P)$). La commande "List(Basis(S))" donne une base du stabilisateur $S$. Ensuite, on calcule l'intersection de l'algèbre de Lie $S$ et son orthogonal pour la forme de Killing dans l'algèbre de Lie  $\textbf{L}$ avec la commande "Intersection(KappaPerp(L,S),S)". Remarquons que  la codimension de cet espace dans $S$ est la dimension d'un facteur réductif de $S$.\\

Si cette codimension est non nulle, l'algèbre de Lie $P$ est de rang non nul et on peut l'éliminer de la liste des sous-algèbres à traiter.\\

Dans le cas contraire , il s'avère que le choix des valeurs $a_{K}$ et $b_{L}$ que nous avons fait est suffisamment générique pour que la forme linéaire soit fortement régulière: en effet, nous sommes capables de trouver un élément $h \in \mathfrak{h}$ vérifiant l'assertion $(ii)$ du théorème \ref{théo pr} pour $f$
dans un ouvert de Zariski $A^{\prime}$ de $A$. Ceci montre que les sous-algèbres biparaboliques de rang nul et d'indice non nul ne sont pas stables et donc
que la conjecture \ref{Conj} est vraie.\\
\begin{rema}
(i) Les calculs effectués avec GAP4 ne nous permettent pas en général de déterminer le rang des sous-algèbres de Lie étudiées. Il permettent simplement de montrer que certaines d'entre-elles sont de rang non nul.\\
(ii) Néanmoins, dans la plupart des cas ces calculs montrent que, pour la forme linéaire $f$ choisie, $\mathfrak{q}(f)$ est réductif de sorte que l'indice et le rang de $\mathfrak{q}$ sont égaux et que $\mathfrak{q}$ est une algèbre de Lie quasi-réductive.
\end{rema}

\section{Résultats des calculs}\label{calculs}

Dans cette partie, nous  donnons pour chaque  algèbre de Lie simple de type exceptionnel, la liste des sous-algèbres biparaboliques $\mathfrak{q}=\mathfrak{q}_{\pi^{\prime}, \pi^{\prime\prime}}$  vérifiant la condition $(\ast)$ en indiquant leur indice et leur rang, puis, pour celles de rang nul et d'indice strictement positif, nous démontrons le théorème \ref{théo pr}. Nous n'explicitons les calculs effectués en se basant sur GAP4 que pour les types  $F_{4}$, $E_{6}$ et $E_{7}$.\\
Comme les sous-algèbres biparaboliques $\mathfrak{q}_{\pi^{\prime}, \pi^{\prime\prime}}$ et $\mathfrak{q}_{\pi^{\prime\prime}, \pi^{\prime}}$ sont conjuguées, on peut supposer que $\pi^{\prime}$ et $\pi^{\prime\prime}$ sont tels que $k_{\pi^{\prime}}+k_{\pi^{\prime\prime}}> \rang(\mathfrak{g})$ et $k_{\pi^{\prime}}> \frac{\rang(\mathfrak{g})}{2}$.\\
De plus, nous distinguerons entre les deux cas complémentaires suivants:\\
$(i)$ $\pi^{\prime}$ est connexe.\\
$(ii)$ $\pi^{\prime}$ n'est pas  connexe et $\pi^{\prime\prime}$ n'est pas connexe ou $k_{\pi^{\prime\prime}}\leq \frac{\rang(\mathfrak{g})}{2}$.\\

\textbf{Calculs dans $F_{4}$}:\\
$$\begin{Dynkin}
   \Dbloc{\Dtext{l}{F_4:}}
   \Dbloc{\Dcirc \Deast \Dtext{t}{\alpha_{1}}}
   \Dbloc{\Dwest \Dcirc \Ddoubleeast  \Dtext{t}{\alpha_{2}}}
   \Drightarrow
   \Dbloc{\Ddoublewest \Dcirc \Deast \Dtext{t}{\alpha_{3}}}
   \Dbloc{\Dwest \Dcirc \Dtext{t}{\alpha_{4}}}
  \end{Dynkin} $$

Il est clair que si la sous-algèbre biparabolique $\mathfrak{q}_{\pi^{\prime}, \pi^{\prime\prime}}$ vérifie la condition $(\ast)$ alors $\pi^{\prime}$ ou $\pi^{\prime\prime}$ est de type $B_{3}$ ou $C_{3}$.\\

Supposons   $\pi^{\prime}$ de type $B_{3}$: $ \pi^{\prime}=\{\alpha_{1},\alpha_{2},\alpha_{3}\}$,  on a $\mathcal{E}_{\pi^{\prime}}=\{\alpha_{1}+2\alpha_{2}+2\alpha_{3},\alpha_{1},\alpha_{3}\}$:\\

\begin{tabular}{|l|l|c|c|c|}
\hline
&$Type$&$ \pi^{\prime\prime}$ & $\ind(\mathfrak{q}_{\pi^{\prime}, \pi^{\prime\prime}})$ & $\rang(\mathfrak{q}_{\pi^{\prime}, \pi^{\prime\prime}})$ \\
\hline
&$A_{1}^{2}$& $\{\alpha_2,\alpha_4\}$ & 1 & 0\\
\hline
&$A_{1}\times A_{2}$& $\{\alpha_1, \alpha_2,\alpha_4\}$ & 1 & 1\\
\hline
&$C_{3}$& $\{\alpha_2, \alpha_3,\alpha_4\}$ & 2 & 2\\
\hline
\end{tabular}\\
\\

Dans toute la suite et pour calculer le stabilisateur $\mathfrak{q}(f)$, on procède comme suit: on fait le calcul pour un $f\in A$ particulier, mais en fait suffisamment générique, obtenant une base de $\mathfrak{q}(f)$ dont les vecteurs s'expriment comme combinaison linéaire effective (c'est à dire dont tous les coefficients sont non nuls) de certains vecteurs radiciels, puis pour un $f$ générique quelconque on vérifie que ce résultat reste vrai.\\

D'après le tableau ci-dessus (voir les calculs ci-après), $\mathfrak{q}=\mathfrak{q}_{\{\{\alpha_{1},\alpha_{2},\alpha_{3}\}, \{\alpha_{2},\alpha_{4}\}\}}$ est la seule sous-algèbre biparabolique de rang nul et d'indice strictement positif:\\
Soit $f=K(v,.)\in A$, o\`{u} $v=a_{1}x_{-\alpha_{2}}+a_{2}x_{-\alpha_{4}}+b_{1}x_{\alpha_{1}}+b_{2}x_{\alpha_{3}}+b_{3}x_{\alpha_{1}+2\alpha_{2}+2\alpha_{3}}$ ( les $a_{i}$  et les $b_{j}$ sont tous non nuls). Dans ce cas, on a $\dim (\mathfrak{q}(f))=1$.\\
Comme $x \in \mathfrak{q}(f)$ si et seulement si $[x,v] \in \mathfrak{q}^{\perp_{K}}$, on obtient $\mathfrak{q}(f)=\mathbb{K}x$ avec
 $$x=x_{\alpha_{2}}+\lambda_{1}\,x_{-\alpha_{1}}+\lambda_{2}\,x_{-\alpha_{3}}+\lambda_{3}\, x_{-\alpha_{1}-\alpha_{2}-\alpha_{3}}+\lambda_{4}\, x_{-\alpha_{2}-2\alpha_{3}}+\lambda_{5}\, x_{-\alpha_{1}-2\alpha_{2}-2\alpha_{3}} $$
o\`{u}\\
 $$\lambda_{1}= \frac{-a_{1}}{2b_{1}}, \lambda_{2}=  \frac{-a_{1}}{2b_{2}}, \lambda_{3}= \frac{b_{2}}{b_{3}}, \lambda_{4}= \frac{b_{1}}{b_{3} } \hbox{ et } \lambda_{5}= \frac{a_{1}}{2b_{3}}.$$

Soit $h \in \mathfrak{h}$ tel que $\alpha_1(h)=\alpha_3(h)=-\alpha_2(h)=-\alpha_4(h)=1$. On a $h.f=-f$ et $[h,x]=-x$ de sorte que l'algèbre de Lie $\mathfrak{q}$ est non stable.\\

Dans cette partie, on donne les calculs effectués par GAP4  montrant que certaines  des sous-algèbres de  la liste précédente sont de rang non nul et donc sont à éliminer de la liste de celles à traiter.\\
\\
$gap>\, L:=SimpleLieAlgebra("F",4,Rationals);;R:=RootSystem(L);$\\
<root system of rank 4>\\
$gap>\, x:=PositiveRootVectors(R);;y:=NegativeRootVectors(R);;$\\
$gap>\, g:=CanonicalGenerators(R);;h:=g[3];;$\\
$gap>\,gP:=Concatenation(x[1]+x{[4]},h,y{[2..4]});;P:=Subalgebra(L,gP);;dp:=Dimension(P);$\\
15\\
$gap>\,a1:=1;;a2:=1$;;\\
$gap>\,b1:=2;;b2:=5;;b3:=1$;;\\
$gap>\, u1:=a1*y[1]+a2*y[4];;u2:=b1*x[2]+b2*x[3]+b3*x[16];;u:=u1+u2$;;\\
$gap>\,bP:=List(Basis(P));$\\
$[ v.1+v.4, v.49, v.50, v.51, v.52, v.26, v.27, v.28, v.4, v.30, v.31, v.33, v.34, v.37, v.40 ]$\\
$gap>\, bP:=List(Basis(P));;l:=[];;for \,i\, in \,[1..dp]\, do\, l[i]:=u*bP[i];od;;l;;$\\
$gap>\, V:=Subspace(L,l);;K:=KappaPerp(L,V);;S:=Intersection(K,P);;dS:=Dimension(S);$\\
1\\
$gap>\, bS:=List(Basis(S));$\\
$[ v.4+(-1/4)*v.26+(-1/10)*v.27+(5)*v.33+(2)*v.34+(1/2)*v.40 ]$\\
$gap>\, KS:=Intersection(KappaPerp(L,S),S);$\\
<vector space of dimension 1 over Rationals>\\
$gap>\,bP:=List(Basis(P));;J:=[];;for\, i \,in\, [1..dp] \,do \,J[i]:=bP[i]*bS[1];od;;J$;;\\
$gap>\,W:=Subspace(P,J);$\\
<vector space over Rationals, with 15 generators>\\
$gap>$ $A:=Intersection(S,W);$\\
<vector space of dimension 1 over Rationals>\\
\\
$gap>\,  L:=SimpleLieAlgebra("F",4,Rationals);;R:=RootSystem(L);;$\\
$gap>\, x:=PositiveRootVectors(R);;y:=NegativeRootVectors(R);;$\\
$gap>\, g:=CanonicalGenerators(R);;h:=g[3];;$\\
$gap>\, gP:=Concatenation(x{[1..2]}+x{[4]},h,y{[2..4]});;P:=Subalgebra(L,gP);;dp:=Dimension(P);$\\
17\\
$gap>\, a1:=1;;a2:=1;;$\\
$gap>\, b1:=2;;b2:=5;;b3:=1;;$\\
$gap>\, u1:=a1*y[6]+a2*y[1];;u2:=b1*x[2]+b2*x[3]+b3*x[16];;u:=u1+u2;;$\\
$gap>\, bP:=List(Basis(P));;l:=[];;for\, i\, in\, [1..dp] \,do\, l[i]:=u*bP[i];od;;l;;$\\
$gap>\, V:=Subspace(L,l);;K:=KappaPerp(L,V);;S:=Intersection(K,P);;dS:=Dimension(S);$\\
1\\
$gap>\, bS:=List(Basis(S));$\\
$[ v.2+(-1/50)*v.6+(-1/200)*v.26+(1/500)*v.27+(1/10)*v.31+(-1/100)*v.40 ]$\\
$gap>\, KS:=Intersection(KappaPerp(L,S),S);$\\
<vector space of dimension 0 over Rationals>\\
\\
$gap>\, L:=SimpleLieAlgebra("F",4,Rationals);;R:=RootSystem(L);;$\\
$gap>\, x:=PositiveRootVectors(R);;y:=NegativeRootVectors(R);;$\\
$gap>\,  g:=CanonicalGenerators(R);;h:=g[3];;$\\
$gap>\, gP:=Concatenation(x[1]+x{[3..4]},h,y{[2..4]});;P:=Subalgebra(L,gP);;dp:=Dimension(P);$\\
22\\
$\,gap> a1:=1;;a2:=1;;a3:=5;;$\\
$gap>\, b1:=2;;b2:=3;;b3:=1;;$\\
$gap>\, u1:=a1*y[4]+a2*y[10]+a3*y[15];;u2:=b1*x[2]+b2*x[3]+b3*x[16];;u:=u1+u2;;$\\
$gap> \,bP:=List(Basis(P));;l:=[];;\,for\, i\, in\, [1..dp]\, do \,l[i]:=u*bP[i];od;;l;;$\\
$gap>\, V:=Subspace(L,l);;K:=KappaPerp(L,V);;S:=Intersection(K,P);;dS:=Dimension(S);$\\
2\\
$gap>\, bS:=List(Basis(S));$\\
$[ v.3+(3/37)*v.4+(-3/37)*v.10+(-1/37)*v.27+(-6/37)*v.28+(9/37)*v.33+(6/37)*v.34, v.4+(73)*v.10+(-37/2)*v.26+(12)*v.27+(146)*v.28+(3)*v.33+(2)*v.34+(37)*v.40 ]$\\
$gap>\,  KS:=Intersection(KappaPerp(L,S),S);$\\
<vector space of dimension 0 over Rationals>\\
\\

Supposons maintenant  $\pi^{\prime}$ de type $C_{3}$: $\pi^{\prime}=\{\alpha_{2},\alpha_{3},\alpha_{4}\}$, ici on a $\mathcal{E}_{\pi^{\prime}}=\{\alpha_{2}+2\alpha_{3}+2\alpha_{4},\alpha_{2}+2\alpha_{3},\alpha_{2}\}$:\\
\\
\begin{tabular}{|l|l|l|c|c|c|}
\hline
&$Type$ & $ \pi^{\prime\prime}$ & $\ind(\mathfrak{q}_{\pi^{\prime}, \pi^{\prime\prime}})$ & $\rang(\mathfrak{q}_{\pi^{\prime}, \pi^{\prime\prime}})$ \\
\hline
&$A_{1}^{2}$& $\{\alpha_1,\alpha_3\}$ & 1 & 1\\
\hline
&$A_{1}^{2}$& $\{\alpha_1, \alpha_4\}$ & 1 & 1\\
\hline
&$A_{1}\times A_{2}$&$\{\alpha_1, \alpha_2,\alpha_4\}$ & 1 & 1\\
\hline
&$A_{1}\times A_{2}$&$\{\alpha_1, \alpha_3,\alpha_4\}$ & 1 & 1\\
\hline
&$B_{3}$&$\{\alpha_1, \alpha_2,\alpha_3\}$ & 2 & 2\\
\hline
\end{tabular}\\
\\

Dans le cas traité et en utilisant "GAP4", on vérifie que toutes  les sous-algèbres biparaboliques qui vérifient la condition $(\ast)$ sont quasi-réductives:\\
\\
$gap>\, L:=SimpleLieAlgebra("F",4,Rationals);;R:=RootSystem(L);;$\\
$gap>\, x:=PositiveRootVectors(R);;y:=NegativeRootVectors(R);;$\\
$gap>\, g:=CanonicalGenerators(R);;h:=g[3];;$\\
$gap>\, gP:=Concatenation(x{[2..3]},h,y[1]+y{[3..4]});;P:=Subalgebra(L,gP);;dp:=Dimension(P);$\\
15\\
$gap>\, a1:=1;;a2:=1;;$\\
$gap>\,  b1:=2;;b2:=5;;b3:=1;;$\\
$gap>\, u1:=a1*y[2]+a2*y[3];;u2:=b1*x[4]+b2*x[10]+b3*x[15];;u:=u1+u2;;$\\
$gap>\, bP:=List(Basis(P));;l:=[];;for\, i\, in \,[1..dp]\, do\, l[i]:=u*bP[i];od;;l;;$\\
$gap>\, V:=Subspace(L,l);;K:=KappaPerp(L,V);;S:=Intersection(K,P);;dS:=Dimension(S);$\\
1\\
$gap>\,bS:=List(Basis(S));$\\
$[ v.3+(-2/5)*v.27+(-1/2)*v.28+(1/5)*v.34 ]$\\
$gap>\,  KS:=Intersection(KappaPerp(L,S),S);$\\
<vector space of dimension 0 over Rationals>\\
\\
$gap>\, L:=SimpleLieAlgebra("F",4,Rationals);;R:=RootSystem(L);;$\\
$gap>\,  x:=PositiveRootVectors(R);;y:=NegativeRootVectors(R);;$\\
$gap>\,  g:=CanonicalGenerators(R);;h:=g[3];;$\\
$gap>\, gP:=Concatenation(x{[1..2]},h,y[1]+y{[3..4]});;P:=Subalgebra(L,gP);;dp:=Dimension(P);$\\
15\\
$gap>\, a1:=1;;a2:=1;;$\\
$gap>\, b1:=2;;b2:=5;;b3:=1;;$\\
$gap>\,  u1:=a1*y[2]+a2*y[1];;u2:=b1*x[4]+b2*x[10]+b3*x[15];;u:=u1+u2;;$\\
$gap>\,  bP:=List(Basis(P));;l:=[];;for\, i \,in\, [1..dp]\, do\, l[i]:=u*bP[i];od;;l;;$\\
$gap>\, V:=Subspace(L,l);;K:=KappaPerp(L,V);;S:=Intersection(K,P);;dS:=Dimension(S);$\\
1\\
$gap>\, bS:=List(Basis(S));$\\
$[ v.1+(-5)*v.25+(-1/5)*v.34+v.39 ]$\\
$gap>\,  KS:=Intersection(KappaPerp(L,S),S);$\\
<vector space of dimension 0 over Rationals>\\
\\
$gap>\, L:=SimpleLieAlgebra("F",4,Rationals);;R:=RootSystem(L);;$\\
$gap>\, x:=PositiveRootVectors(R);;y:=NegativeRootVectors(R);;$\\
$gap>\,  g:=CanonicalGenerators(R);;h:=g[3];;$\\
$gap>\,  gP:=Concatenation(x{[1..2]}+x{[4]},h,y[1]+y{[3..4]});;P:=Subalgebra(L,gP);;dp:=Dimension(P);$\\
17\\
$gap>\, a1:=1;;a2:=1;;$\\
$gap>\, b1:=2;;b2:=5;;b3:=1;;$\\
$gap>\,  u1:=a1*y[6]+a2*y[1];;u2:=b1*x[4]+b2*x[10]+b3*x[15];;u:=u1+u2;$\\
$gap>\, bP:=List(Basis(P));;l:=[];;for\, i\, in \,[1..dp] \,do\, l[i]:=u*bP[i];od;;l;;$\\
$gap>\,  V:=Subspace(L,l);;K:=KappaPerp(L,V);;S:=Intersection(K,P);;dS:=Dimension(S);$\\
1\\
$gap>\, bS:=List(Basis(S));$\\
$[ v.1+(-5)*v.25+(-1/5)*v.34+v.39 ]$\\
$gap>\, KS:=Intersection(KappaPerp(L,S),S);$\\
<vector space of dimension 0 over Rationals>\\
\\
$gap>\, L:=SimpleLieAlgebra("F",4,Rationals);;R:=RootSystem(L);;$\\
$gap>\,  x:=PositiveRootVectors(R);;y:=NegativeRootVectors(R);;$\\
$gap> \, g:=CanonicalGenerators(R);;h:=g[3];;$\\
$gap>\, gP:=Concatenation(x{[1..3]},h,y[1]+y{[3..4]});;P:=Subalgebra(L,gP);;dp:=Dimension(P);$\\
17\\
$gap>\, a1:=1;;a2:=1;;$\\
$gap>\,  b1:=2;;b2:=5;;b3:=1;;$\\
$gap> \,u1:=a1*y[5]+a2*y[2];;u2:=b1*x[4]+b2*x[10]+b3*x[15];;u:=u1+u2;;$\\
$gap>\, bP:=List(Basis(P));;l:=[];;for\, i\, in\, [1..dp] \,do\, l[i]:=u*bP[i];od;;l;;$\\
$gap>\, V:=Subspace(L,l);;K:=KappaPerp(L,V);;S:=Intersection(K,P);;dS:=Dimension(S);$\\
1\\
$gap>\,bS:=List(Basis(S));$\\
$[ v.5+(-1/2)*v.28+(-2)*v.29+v.39 ]$
$gap>\, KS:=Intersection(KappaPerp(L,S),S);$\\
<vector space of dimension 0 over Rationals>\\

\textbf{Calculs dans $E_{6}$}:\\
$$\begin{Dynkin}
\Dbloc{\Dtext{l}{E_6:}}
\Dbloc{\Dcirc \Deast\Dtext{t}{\alpha_{1}} }
 \Dbloc{\Dwest\Dcirc \Deast\Dtext{t}{\alpha_{3}} }
 \Dbloc{\Dwest \Dcirc \Dsouth \Deast\Dtext{t}{\alpha_{4}} }
 \Dbloc{\Dwest \Dcirc \Deast\Dtext{t}{\alpha_{5}}}
\Dbloc{ \Dwest \Dcirc\Dtext{t}{\alpha_{6}} }
\Dskip
\Dspace \Dspace \Dspace \Dbloc{\Dnorth \Dcirc\Dtext{l}{\alpha_{2}}}
\end{Dynkin}$$

Il est clair que si la sous-algèbre biparabolique $\mathfrak{q}_{\pi^{\prime}, \pi^{\prime\prime}}$ vérifie la condition $(\ast)$ alors $\pi^{\prime}$ ou $\pi^{\prime\prime}$ est de type $D_{4}$ ou $D_{5}$.\\
\\

Supposons  $\pi^{\prime}$ de type $D_{4}$, on a $\pi^{\prime}=\{\alpha_{2},\alpha_{3},\alpha_{4},\alpha_{5}\}$. Ici, on a $\mathcal{E}_{\pi^{\prime}}=\{\alpha_{2}+\alpha_{3}+2\alpha_{4}+\alpha_{5},\alpha_{2},\alpha_{3},\alpha_{5}\}$:\\
\\
\begin{tabular}{|l|l|l|c|c|c|}
\hline
&$Type$ & $ \pi^{\prime\prime}$ & $\ind(\mathfrak{q}_{\pi^{\prime}, \pi^{\prime\prime}})$ & $\rang(\mathfrak{q}_{\pi^{\prime}, \pi^{\prime\prime}})$ \\
\hline
&$A_{1}^{3}$& $\{\alpha_1,\alpha_4,\alpha_6\}$ & 1 & 0\\
\hline
&$A_{1}^{2}\times A_{2}$& $\{\alpha_1,\alpha_{2}, \alpha_4,\alpha_6\}$ & 1 & 1\\
\hline
&$A_{1}\times A_{4}$&$\{\alpha_1, \alpha_2,\alpha_3,\alpha_4,\alpha_6\}$ & 1 & 1\\
\hline
&$A_{1}\times A_{4}$&$\{\alpha_1, \alpha_2,\alpha_4,\alpha_5,\alpha_6\}$ & 1 & 1\\
\hline
\end{tabular}\\
\\

Dans ce cas et d'après le tableau ci-dessus (voir les calculs ci-après), $$\mathfrak{q}=\mathfrak{q}_{\{\{\alpha_{2},\alpha_{3},\alpha_{5},\alpha_{4}\},\{\alpha_{1},\alpha_{4},\alpha_{6}\}\}}$$ est la seule sous-algèbre biparabolique de rang nul et d'indice strictement positif:\\
Soit $f=K(v,.)\in A$, o\`{u} $v=a_{1}\,x_{-\alpha_{1}}+a_{2}\,x_{-\alpha_{4}}+a_{3}\,x_{-\alpha_{6}}+b_{1}\,x_{\alpha_{2}}+b_{2}\,
x_{\alpha_{3}}+b_{3}\,x_{\alpha_{5}}+b_{4}\,x_{\alpha_{2}+\alpha_{3}+2\alpha_{4}+\alpha_{5}}$ (les $a_{i}$  et les $b_{j}$ sont tous non nuls). Dans ce cas, on a $\dim (\mathfrak{q}(f))=1$.\\
Comme $x \in \mathfrak{q}(f)$ si et seulement si $[x,v] \in \mathfrak{q}^{\perp_{K}}$, on obtient $\mathfrak{q}(f)=\mathbb{K}x$ avec\\
 $x=x_{\alpha_{4}}+\lambda_{1}\,x_{-\alpha_{2}}+\lambda_{2}\,x_{-\alpha_{3}}+\lambda_{3}\, x_{-\alpha_{5}}+\lambda_{4}\, x_{-\alpha_{2}-\alpha_{3}-\alpha_{4}}+\lambda_{5}\,x_{-\alpha_{2}-\alpha_{4}-\alpha_{5}} +\lambda_{6}\,x_{-\alpha_{3}-\alpha_{4}-\alpha_{5}}+\lambda_{7}\,x_{-\alpha_{2}-\alpha_{3}-2\alpha_{4}-\alpha_{5}}.$\\
o\`{u}\\
$ \lambda_{1}= \frac{-a_{2}}{2b_{1}}$, $\lambda_{2}= \frac{-a_{2}}{2b_{2}}$, $\lambda_{3}= \frac{a_{2}}{2b_{3}}$, $\lambda_{4}= \frac{b_{3}}{b_{4}}$, $\lambda_{5}= \frac{-b_{2}}{b_{4}}$, $\lambda_{6}= \frac{-b_{1}}{b_{4}}$ et  $\lambda_{7}= \frac{a_{2}}{2b_{4}}$.\\

Soit $h \in \mathfrak{h}$ tel que  $\alpha_2(h)=\alpha_3(h)=\alpha_5(h)=-\alpha_1(h)=-\alpha_4(h)=-\alpha_6(h)=1$. On a $h.f=-f$ et  $[h,x]=-x$ de sorte que l'algèbre de Lie $\mathfrak{q}$ est non stable.\\

Dans cette partie, on donne les calculs effectués par GAP4  montrant que certaines  des sous-algèbres de  la liste sont de rang non nul et donc sont à éliminer de la liste de celles à traiter.\\
\\
$gap> L:=SimpleLieAlgebra("E",6,Rationals);;R:=RootSystem(L);;$\\
$gap> x:=PositiveRootVectors(R);;y:=NegativeRootVectors(R);;$\\
$gap> g:=CanonicalGenerators(R);;h:=g[3];;$\\
$gap> gP:=Concatenation(x{[1]}+x{[4]}+x{[6]},h,y{[2..5]});;P:=Subalgebra(L,gP);;dp:=Dimension(P);$\\
$21$\\
$gap> a1:=1;;a2:=1;;a3:=3;;a4:=2;;$\\
$gap> b1:=3;;b2:=13;;b3:=1;;$\\
$gap> u1:=b1*y[1]+b2*y[4]+b3*y[6];;u2:=a1*x[2]+a2*x[3]+a3*x[5]+a4*x[24];;u:=u1+u2;;$\\
$gap> bP:=List(Basis(P));;l:=[];;for i in [1..dp] do l[i]:=u*bP[i];od;;l;;$\\
$gap> V:=Subspace(L,l);;K:=KappaPerp(L,V);;S:=Intersection(K,P);;dS:=Dimension(S);$\\
$1$\\
$gap> bS:=List(Basis(S));$\\
$[ v.4+(-13/2)*v.38+(-13/2)*v.39+(-13/6)*v.41+(3/2)*v.49+(-1/2)*v.50+(-1/2)*v.51+(13/4)*v.60 ]$\\
$gap> KS:=Intersection(KappaPerp(P,S),S);$\\
$<vector space of dimension 1 over Rationals>$\\

$gap> L:=SimpleLieAlgebra("E",6,Rationals);;R:=RootSystem(L);;$\\
$gap> x:=PositiveRootVectors(R);;y:=NegativeRootVectors(R);;$\\
$gap> g:=CanonicalGenerators(R);;h:=g[3];;$\\
$gap> gP:=Concatenation(x{[1..2]}+x{[4]}+x{[6]},h,y{[2..5]});;P:=Subalgebra(L,gP);;dp:=Dimension(P);$\\
$23$\\
$gap> a1:=1;;a2:=1;;a3:=3;;a4:=2;;$\\
$gap> b1:=3;;b2:=13;;b3:=1;;$\\
$gap> u1:=b1*y[1]+b2*y[8]+b3*y[6];;u2:=a1*x[2]+a2*x[3]+a3*x[5]+a4*x[24];;u:=u1+u2;;$\\
$gap>  bP:=List(Basis(P));;l:=[];;for i in [1..dp] do l[i]:=u*bP[i];od;;l;;$\\
$gap> V:=Subspace(L,l);;K:=KappaPerp(L,V);;S:=Intersection(K,P);;dS:=Dimension(S);$\\
$1$\\
$gap> bS:=List(Basis(S));$\\
$[ v.2+(-13/3)*v.8+(-169/6)*v.38+(169/6)*v.39+(169/18)*v.41+(13/2)*v.45+(-13/6)*v.46+(-169/12)*v.60 ]$\\
$gap> KS:=Intersection(KappaPerp(P,S),S);$\\
$<vector space of dimension 0 over Rationals>$\\
\\
$gap> L:=SimpleLieAlgebra("E",6,Rationals);;R:=RootSystem(L);;$\\
$gap> x:=PositiveRootVectors(R);;y:=NegativeRootVectors(R);;$\\
$gap> g:=CanonicalGenerators(R);;h:=g[3];;$\\
$gap> gP:=Concatenation(x{[1..4]}+x{[6]},h,y{[2..5]});;P:=Subalgebra(L,gP);;dp:=Dimension(P);$\\
$29$\\
$gap> a1:=1;;a2:=1;;a3:=3;;a4:=2;;$\\
$gap> b1:=3;;b2:=13;;b3:=1;;$\\
$gap> u1:=b1*y[9]+b2*y[17]+b3*y[6];;u2:=a1*x[2]+a2*x[3]+a3*x[5]+a4*x[24];;u:=u1+u2;;$\\
$gap>  bP:=List(Basis(P));;l:=[];;for i in [1..dp] do l[i]:=u*bP[i];od;;l;;$\\
$gap> V:=Subspace(L,l);;K:=KappaPerp(L,V);;S:=Intersection(K,P);;dS:=Dimension(S);$\\
$1$\\
$gap> bS:=List(Basis(S));$\\
$[ v.1+(-26/9)*v.3+(26/9)*v.9+(-13/3)*v.38+(13/3)*v.39+(-13/9)*v.41+(-13/3)*v.44+(13/9)*v.46+(13/6)*v.60 ]$\\
$gap> KS:=Intersection(KappaPerp(P,S),S);$\\
$<vector space of dimension 0 over Rationals>$\\

$gap> L:=SimpleLieAlgebra("E",6,Rationals);;R:=RootSystem(L);;$\\
$gap> x:=PositiveRootVectors(R);;y:=NegativeRootVectors(R);;$\\
$gap> g:=CanonicalGenerators(R);;h:=g[3];;$\\
$gap> gP:=Concatenation(x{[1..2]}+x{[4..6]},h,y{[2..5]});;P:=Subalgebra(L,gP);;dp:=Dimension(P);$\\
$29$\\
$gap> a1:=1;;a2:=1;;a3:=3;;a4:=2;;$\\
$gap> b1:=3;;b2:=13;;b3:=1;;$\\
$gap> u1:=b1*y[1]+b2*y[10]+b3*y[20];;u2:=a1*x[2]+a2*x[3]+a3*x[5]+a4*x[24];;u:=u1+u2;;$\\
$gap>  bP:=List(Basis(P));;l:=[];;for i in [1..dp] do l[i]:=u*bP[i];od;;l;;$\\
$gap>  V:=Subspace(L,l);;K:=KappaPerp(L,V);;S:=Intersection(K,P);;dS:=Dimension(S);$\\
$1$\\
$gap> bS:=List(Basis(S));$\\
$[ v.5+(2197/2)*v.6+(-13)*v.10+(169/2)*v.38+(169/2)*v.39+(-169/6)*v.41+(-13/2)*v.44+(-13/2)*v.45+(-169/4)*v.60 ]$\\
$gap> KS:=Intersection(KappaPerp(P,S),S);$\\
$<vector space of dimension 0 over Rationals>$\\

Supposons maintenant   $\pi^{\prime}$ de type $D_{5}$, par symétrie on prend  $\pi^{\prime}=\{\alpha_{1},\alpha_{2},\alpha_{3},\alpha_{4},\alpha_{5}\}$. Ici, on a $\mathcal{E}_{\pi^{\prime}}=\{\alpha_{1}+\alpha_{2}+2\alpha_{3}+2\alpha_{4}+\alpha_{5},\alpha_{2}+\alpha_{4}+\alpha_{5},\alpha_{1},\alpha_{4}\}$:\\
\\
\begin{tabular}{|l|l|l|c|c|c|}
\hline
&$Type$ & $ \pi^{\prime\prime}$ & $\ind(\mathfrak{q}_{\pi^{\prime}, \pi^{\prime\prime}})$ & $\rang(\mathfrak{q}_{\pi^{\prime}, \pi^{\prime\prime}})$ \\
\hline
&$A_{1}^{3}$& $\{\alpha_2,\alpha_3,\alpha_6\}$ & 1 & 0\\
\hline
&$A_{1}^{2}\times A_{2}$& $\{\alpha_1,\alpha_{2}, \alpha_3,\alpha_6\}$ & 1 & 1\\
\hline
&$A_{1}^{2}\times A_{2}$&$\{\alpha_2, \alpha_3,\alpha_5,\alpha_6\}$ & 1 & 0\\
\hline
&$A_{1}\times A_{2}^{2}$&$\{\alpha_1, \alpha_2,\alpha_3,\alpha_5,\alpha_6\}$ & 1 & 1\\
\hline
&$A_{1}\times A_{4}$&$\{\alpha_1, \alpha_2,\alpha_3,\alpha_4,\alpha_6\}$ & 1 & 1\\
\hline
\end{tabular}\\
\\

Dans ce cas et d'après le tableau ci-dessus (voir les calculs ci-après), les sous-algèbres biparaboliques $\mathfrak{q}_{\{\{\alpha_{1},\alpha_{2},\alpha_{3},\alpha_{4},\alpha_{5}\},\{\alpha_{2},\alpha_{3},\alpha_{6}\}\}}$ et $\mathfrak{q}_{\{\{\alpha_{1},\alpha_{2},\alpha_{3},\alpha_{4},\alpha_{5}\},\{\alpha_{2},\alpha_{3},\alpha_{5},\alpha_{6}\}\}}$ sont de rang nul et d'indice strictement positif.\\

Soient $\mathfrak{q}=\mathfrak{q}_{\{\{\alpha_{1},\alpha_{2},\alpha_{3},\alpha_{4},\alpha_{5}\},\{\alpha_{2},\alpha_{3},\alpha_{6}\}\}}$ et $f=K(v,.)\in A$, o\`{u} $v=a_{1}\,x_{-\alpha_{2}}+a_{2}\,x_{-\alpha_{3}}+a_{3}\,x_{-\alpha_{6}}+b_{1}\,x_{\alpha_{1}}+b_{2}\,
x_{\alpha_{4}}+b_{3}\,x_{\alpha_{2}+\alpha_{4}+\alpha_{5}}+b_{4}\,x_{\alpha_{1}+\alpha_{2}+2\alpha_{3}+2\alpha_{4}+\alpha_{5}}$ (les $a_{i}$  et les $b_{j}$ sont tous non nuls). Dans ce cas, on a $\dim (\mathfrak{q}(f))=1$.\\

Comme $x \in \mathfrak{q}(f)$ si et seulement si $[x,v] \in \mathfrak{q}^{\perp_{K}}$, on obtient $\mathfrak{q}(f)=\mathbb{K}x$ avec\\
 $x=x_{\alpha_{3}}+\lambda_{1}\,x_{-\alpha_{1}}+\lambda_{2}\,x_{-\alpha_{4}}+\lambda_{3}\, x_{-\alpha_{1}-\lambda_{3}-\alpha_{4}}\,+ \lambda_{4}\, x_{-\alpha_{2}-\alpha_{4}-\alpha_{5}}+\lambda_{5}\,x_{-\alpha_{1}-\alpha_{2}-\alpha_{3}-\alpha_{4}-\alpha_{5}} +\lambda_{6}\,x_{-\alpha_{2}-\alpha_{3}-2\alpha_{4}-\alpha_{5}}+\lambda_{7}\,x_{-\alpha_{1}-\alpha_{2}-2\alpha_{3}-2\alpha_{4}-\alpha_{5}}.$\\
o\`{u}\\
$\lambda_{1}=\frac{-a_{2}}{2b_{1}}$, $\lambda_{2}= \frac{-a_{2}}{2b_{2}}$, $\lambda_{3}= \frac{-b_{3}}{b_{4}}$, $\lambda_{4}= \frac{-a_{2}}{2b_{3}}$, $\lambda_{5}= \frac{-b_{2}}{b_{4}}$, $\lambda_{6}= \frac{b_{1}}{b_{4}}$ et $\lambda_{7}= \frac{a_{2}}{2b_{4}}$.\\

Soit $h \in \mathfrak{h}$ tel que  $\alpha_1(h)=\alpha_4(h)=\alpha_5(h)=1=-\alpha_2(h)=-\alpha_3(h)=-\alpha_6(h)=1$. On a $h.f=-f$ et $[h,x]=-x$ de sorte que l'algèbre de Lie $\mathfrak{q}$ est non stable.\\

Soient maintenant $\mathfrak{q}=\mathfrak{q}_{\{\{\alpha_{1},\alpha_{2},\alpha_{3},\alpha_{4},\alpha_{5}\},\{\alpha_{2},\alpha_{3},\alpha_{5},\alpha_{6}\}\}}$
et $f=K(v,.)\in A$, o\`{u} $v=a_{1}\,x_{-\alpha_{2}}+a_{2}\,x_{-\alpha_{3}}+a_{3}\,x_{-\alpha_{5}-\alpha_{6}}+b_{1}\,x_{\alpha_{1}}+b_{2}\,
x_{\alpha_{4}}+b_{3}\,x_{\alpha_{2}+\alpha_{4}+\alpha_{5}}+b_{4}\,x_{\alpha_{1}+\alpha_{2}+2\alpha_{3}+2\alpha_{4}+\alpha_{5}}$ (les $a_{i}$  et les $b_{j}$ sont tous non nuls). Dans ce cas, on a $\dim (\mathfrak{q}(f))=1$.\\

Comme $x \in \mathfrak{q}(f)$ si et seulement si $[x,v] \in \mathfrak{q}^{\perp_{K}}$, on obtient $\mathfrak{q}(f)=\mathbb{K}x$ avec\\
 $x=x_{\alpha_{3}}+\lambda_{1}\,x_{-\alpha_{1}}+\lambda_{2}\,x_{-\alpha_{4}}+\lambda_{3}\, x_{-\alpha_{1}-\lambda_{3}-\alpha_{4}}\,+ \lambda_{4}\, x_{-\alpha_{2}-\alpha_{4}-\alpha_{5}}+\lambda_{5}\,x_{-\alpha_{1}-\alpha_{2}-\alpha_{3}-\alpha_{4}-\alpha_{5}} +\lambda_{6}\,x_{-\alpha_{2}-\alpha_{3}-2\alpha_{4}-\alpha_{5}}+\lambda_{7}\,x_{-\alpha_{1}-\alpha_{2}-2\alpha_{3}-2\alpha_{4}-\alpha_{5}}.$\\
o\`{u}\\
     $\lambda_{1}= \frac{-a_{2}}{2b_{1}}$, $\lambda_{2}= \frac{-a_{2}}{2b_{2}}$, $\lambda_{3}= \frac{-b_{3}}{b_{4}}$, $\lambda_{4}=\frac{-a_{2}}{2b_{3}}$, $\lambda_{5}= \frac{-b_{2}}{b_{4}}$, $\lambda_{6}= \frac{b_{1}}{b_{4}}$ et $\lambda_{7}=\frac{a_{2}}{2b_{4}}$.\\

Soit $h \in \mathfrak{h}$ tel que  $\alpha_1(h)=\alpha_4(h)=\alpha_5(h)=-\alpha_2(h)=-\alpha_3(h)=-1$ et $\alpha_6(h)=-2$. On a $h.f=-f$ et $[h,x]=-x$ de sorte que l'algèbre de Lie $\mathfrak{q}$ est non stable.\\

Dans cette partie, on donne les calculs effectués par GAP4  montrant que certaines  des sous-algèbres de  la liste sont de rang non nul et donc sont à éliminer de la liste de celles à traiter.\\
\\
$gap>  L:=SimpleLieAlgebra("E",6,Rationals);;R:=RootSystem(L);;$\\
$gap>  x:=PositiveRootVectors(R);;y:=NegativeRootVectors(R);;$\\
$gap> g:=CanonicalGenerators(R);;h:=g[3];;$\\
$gap>  gP:=Concatenation(x{[2..3]}+[x[6]],h,y{[1..5]});;P:=Subalgebra(L,gP);;dp:=Dimension(P);$\\
$29$\\
$gap> a1:=1;;a2:=1;;a3:=3;;a4:=2;;$\\
$gap> b1:=3;;b2:=13;;b3:=1;;$\\
$gap> u1:=b1*y[2]+b2*y[3]+b3*y[6];;u2:=a1*x[1]+a2*x[4]+a3*x[14]+a4*x[29];;u:=u1+u2;;$\\
$gap> bP:=List(Basis(P));;l:=[];;for i in [1..dp] do l[i]:=u*bP[i];od;;l:;$\\
$gap>  V:=Subspace(L,l);;K:=KappaPerp(L,V);;S:=Intersection(K,P);;dS:=Dimension(S);$\\
$1$\\
$gap> KS:=Intersection(KappaPerp(L,S),S);$\\
$<vector space of dimension 1 over Rationals>$\\
$gap> bS:=List(Basis(S));$\\
$[ v.3+(-13/2)*v.37+(-13/2)*v.40+(-3/2)*v.48+(-13/6)*v.50+(-1/2)*v.58+(1/2)*v.60+(13/4)*v.65 ]$\\
\\
$gap> L:=SimpleLieAlgebra("E",6,Rationals);;R:=RootSystem(L);;$\\
$gap> x:=PositiveRootVectors(R);;y:=NegativeRootVectors(R);;$\\
$gap> g:=CanonicalGenerators(R);;h:=g[3];;$\\
$gap> gP:=Concatenation(x{[2..3]}+x{[5..6]},h,y{[1..5]});;P:=Subalgebra(L,gP);;dp:=Dimension(P);$\\
$31$\\
$gap> a1:=1;;a2:=1;;a3:=3;;a4:=2;;$\\
$gap> b1:=3;;b2:=13;;b3:=1;;$\\
$gap> u1:=b1*y[2]+b2*y[3]+b3*y[11];;u2:=a1*x[1]+a2*x[4]+a3*x[14]+a4*x[29];;u:=u1+u2;;$\\
$gap> bP:=List(Basis(P));;l:=[];;for i in [1..dp] do l[i]:=u*bP[i];od;;l;;$\\
$gap>  V:=Subspace(L,l);;K:=KappaPerp(L,V);;S:=Intersection(K,P);;dS:=Dimension(S);$\\
$1$\\
$gap> KS:=Intersection(KappaPerp(L,S),S);$\\
$<vector space of dimension 1 over Rationals>$\\
$gap>  bS:=List(Basis(S));$\\
$[ v.3+(-13/2)*v.37+(-13/2)*v.40+(-3/2)*v.48+(-13/6)*v.50+(-1/2)*v.58+(1/2)*v.60+(13/4)*v.65 ]$\\*
\\
$gap> L:=SimpleLieAlgebra("E",6,Rationals);;R:=RootSystem(L);;$\\
$gap> x:=PositiveRootVectors(R);;y:=NegativeRootVectors(R);;$\\
$gap> g:=CanonicalGenerators(R);;h:=g[3];;$\\
$gap>  gP:=Concatenation(x{[1..3]}+x{[5..6]},h,y{[1..5]});;P:=Subalgebra(L,gP);;dp:=Dimension(P);$\\
$33$\\
$gap> a1:=1;;a2:=1;;a3:=3;;a4:=2;;$\\
$gap>  b1:=3;;b2:=13;;b3:=1;;$\\
$gap>  u1:=b1*y[2]+b2*y[7]+b3*y[11];;u2:=a1*x[1]+a2*x[4]+a3*x[14]+a4*x[29];;u:=u1+u2;;$\\
$gap>  bP:=List(Basis(P));;l:=[];;for i in [1..dp] do l[i]:=u*bP[i];od;;l;;$\\
$gap> V:=Subspace(L,l);;K:=KappaPerp(L,V);;S:=Intersection(K,P);;dS:=Dimension(S);$\\
$1$\\
$gap> KS:=Intersection(KappaPerp(L,S),S);$\\
$<vector space of dimension 0 over Rationals>$\\
\\
$gap> L:=SimpleLieAlgebra("E",6,Rationals);;R:=RootSystem(L);;$\\
$gap> x:=PositiveRootVectors(R);;y:=NegativeRootVectors(R);;$\\
$gap> g:=CanonicalGenerators(R);;h:=g[3];;$\\
$gap> gP:=Concatenation(x{[1..4]}+x{[6]},h,y{[1..5]});;P:=Subalgebra(L,gP);;dp:=Dimension(P);$\\
$37$\\
$gap> a1:=1;;a2:=-1;;a3:=1;;a4:=-1;;$\\
$gap>  b1:=3;;b2:=-2;;b3:=1;;$\\
$gap> u1:=b1*y[9]+b2*y[17]+b3*y[6];;u2:=a1*x[1]+a2*x[4]+a3*x[14]+a4*x[29];;u:=u1+u2;;$\\
$gap> bP:=List(Basis(P));;l:=[];;for i in [1..dp] do l[i]:=u*bP[i];od;;l;;$\\
$gap> V:=Subspace(L,l);;K:=KappaPerp(L,V);;S:=Intersection(K,P);;dS:=Dimension(S);$\\
$1$
$gap> KS:=Intersection(KappaPerp(L,S),S);$\\
$<vector space of dimension 0 over Rationals>$\\
\\
$gap> L:=SimpleLieAlgebra("E",6,Rationals);;R:=RootSystem(L);;$\\
$gap> x:=PositiveRootVectors(R);;y:=NegativeRootVectors(R);;$\\
$gap> g:=CanonicalGenerators(R);;h:=g[3];;$\\
$gap> gP:=Concatenation(x{[1..3]}+x{[6]},h,y{[1..5]});;P:=Subalgebra(L,gP);;dp:=Dimension(P);$\\
$31$\\
$gap> a1:=1;;a2:=-1;;a3:=1;;a4:=-1;;$\\
$gap>  b1:=3;;b2:=-2;;b3:=1;;$\\
$gap> u1:=b1*y[7]+b2*y[2]+b3*y[6];;u2:=a1*x[1]+a2*x[4]+a3*x[14]+a4*x[29];;u:=u1+u2;;$\\
$gap> bP:=List(Basis(P));;l:=[];;for i in [1..dp] do l[i]:=u*bP[i];od;;l;;$\\
\\
$gap>  V:=Subspace(L,l);;K:=KappaPerp(L,V);;S:=Intersection(K,P);;dS:=Dimension(S);$\\
$1$\\
$gap> KS:=Intersection(KappaPerp(L,S),S);$\\
$<vector space of dimension 0 over Rationals>$\\
\\
\\

\textbf{Calculs dans $E_{7}$ }:\\

$$\begin{Dynkin}
\Dbloc{\Dtext{l}{E_7:}}
\Dbloc{\Dcirc \Deast\Dtext{t}{\alpha_{1}} }
 \Dbloc{\Dwest\Dcirc \Deast\Dtext{t}{\alpha_{3}} }
 \Dbloc{\Dwest \Dcirc \Dsouth \Deast\Dtext{t}{\alpha_{4}} }
 \Dbloc{\Dwest \Dcirc \Deast\Dtext{t}{\alpha_{5}}}
\Dbloc{ \Dwest  \Deast \Dcirc\Dtext{t}{\alpha_{6}} }
\Dbloc{ \Dwest \Dcirc\Dtext{t}{\alpha_{7}} }
\Dskip
\Dspace \Dspace \Dspace \Dbloc{\Dnorth \Dcirc\Dtext{l}{\alpha_{2}}}
\end{Dynkin}$$
\\

Remarquons tout d'abord que si  la sous-algèbre biparabolique $\mathfrak{q}_{\pi^{\prime}, \pi^{\prime\prime}}$ vérifie la condition $(\ast)$ alors $\pi^{\prime}$ est tel que $\mathcal{K}(\pi^{\prime})$ est de cardinal supérieur ou égal à 4. \\
\\
\textbf{Cas o\`{u} $\pi^{\prime}$  est connexe: }\\

Alors $\pi^{\prime}$ peut être de type $D_{4}$, $D_{5}$, $D_{6}$ ou $E_{6}$. On vérifie que si $\pi^{\prime}$ est de type $D_{4}$ la condition $(\ast)$ ne peut être satisfaite.\\

Supposons    $\pi^{\prime}$ de type $D_{5}$,   $\pi^{\prime}=\{\alpha_{1},\alpha_{2},\alpha_{3},\alpha_{4},\alpha_{5}\}$. Ici, on a $\mathcal{E}_{\pi^{\prime}}=\{\alpha_{1}+\alpha_{2}+2\alpha_{3}+2\alpha_{4}+\alpha_{5},\alpha_{2}+\alpha_{4}+\alpha_{5},\alpha_{1},\alpha_{4}\}$:\\
\\
\begin{tabular}{|l|l|c|c|c|}
\hline
&$Type$ & $ \pi^{\prime\prime}$ & $\ind(\mathfrak{q}_{\pi^{\prime}, \pi^{\prime\prime}})$ & $\rang(\mathfrak{q}_{\pi^{\prime}, \pi^{\prime\prime}})$ \\
\hline
&$A_{1}^{2}\times A_{3}$& $\{\alpha_2,\alpha_3,\alpha_{5},\alpha_6,\alpha_{7}\}$ & 1 & 0\\
\hline
&$A_{1}\times A_{2}\times A_{3}$& $\{\alpha_1,\alpha_{2}, \alpha_3,\alpha_{5},\alpha_6,\alpha_{7}\}$ & 1 & 1\\
\hline
&$D_{6}$&$\{\alpha_2, \alpha_3,\alpha_{4},\alpha_5,\alpha_6,\alpha_{7}\}$ & 3 & 3\\
\hline
\end{tabular}\\
\\

Dans ce cas et d'après le tableau ci-dessus, $\mathfrak{q}=\mathfrak{q}_{\{\{\alpha_1,\alpha_{2},\alpha_{3},\alpha_{4},\alpha_{5}\},\{\alpha_{2},\alpha_{3},\alpha_5,\alpha_{6},\alpha_7\}\}}$ est la seule sous-algèbre biparabolique de rang nul et d'indice strictement positif:\\
Soit $f=K(v,.)\in A$, o\`{u} $v=a_{1}\,x_{-\alpha_{2}}+a_{2}\,x_{-\alpha_{3}}+a_{3}\,x_{-\alpha_5-\alpha_{6}-\alpha_7}+a_{4}\,x_{-\alpha_{6}}+b_{1}\,x_{\alpha_{1}}+b_{2}\,
x_{\alpha_{4}}+b_{3}\,x_{\alpha_2+\alpha_4+\alpha_{5}}+b_{4}\,x_{\alpha_1+\alpha_{2}+2\alpha_{3}+2\alpha_{4}+\alpha_{5}}$ (les $a_{i}$  et les $b_{j}$ sont tous non nuls). Dans ce cas, on a $\mathfrak{q}(f)=\mathbb{K}x$ avec\\
 $x=x_{\alpha_{3}}+\lambda_{1}\,x_{-\alpha_{1}}+\lambda_{2}\,x_{-\alpha_{4}}+\lambda_{3}\, x_{-\alpha_1-\alpha_3-\alpha_{4}}+\lambda_{4}\, x_{-\alpha_{2}-\alpha_{4}-\alpha_{5}}+\lambda_{5}\,x_{-\alpha_1-\alpha_{2}-\alpha_3-\alpha_{4}-\alpha_{5}} +\lambda_{6}\,x_{-\alpha_{2}-\alpha_3-2\alpha_{4}-\alpha_{5}}+\lambda_{7}\,x_{-\alpha_1-\alpha_{2}-2\alpha_{3}-2\alpha_{4}-\alpha_{5}},$\\
\\
o\`{u} $\lambda_{1},...,\lambda_{7}$ sont des éléments non nuls de $\mathbb{K}$ (dépendant du choix des $a_{i}$ et des $b_{i}$). Soit $h \in \mathfrak{h}$ tel que  $$\alpha_1(h)=\alpha_4(h)=\alpha_5(h)=-\alpha_2(h)=-\alpha_3(h)=-\alpha_6(h)=-\alpha_7(h)=1.$$
On a $h.f=-f$ et $[h,x]=-x$ de sorte que l'algèbre de Lie $\mathfrak{q}$ est non stable.\\
\\
\newpage
Supposons    $\pi^{\prime}$ de type $D_{5}$,  on a $\pi^{\prime}=\{\alpha_{2},\alpha_{3},\alpha_{4},\alpha_{5},\alpha_{6}\}$. Ici, on a $\mathcal{E}_{\pi^{\prime}}=\{\alpha_{2}+\alpha_{3}+2\alpha_{4}+2\alpha_{5}+\alpha_{6},\alpha_{2}+\alpha_{3}+\alpha_{4},\alpha_{4},\alpha_{6}\}$:\\
\\
\begin{tabular}{|l|l|c|c|c|}
\hline
&$Type$ & $ \pi^{\prime\prime}$ & $\ind(\mathfrak{q}_{\pi^{\prime}, \pi^{\prime\prime}})$ & $\rang(\mathfrak{q}_{\pi^{\prime}, \pi^{\prime\prime}})$ \\
\hline
&$A_{1}^{4} $& $\{\alpha_1,\alpha_2,\alpha_{5},\alpha_{7}\}$ & 1 & 0\\
\hline
&$A_{1}^{3}\times A_{2}$& $\{\alpha_1,\alpha_{2}, \alpha_3,\alpha_{5},\alpha_{7}\}$ & 1 & 0\\
\hline
\end{tabular}\\
\\

Dans ce cas et d'après le tableau ci-dessus, les sous-algèbres biparaboliques\\ $\mathfrak{q_{1}}=\mathfrak{q}_{\{\{\alpha_2,\alpha_{3},\alpha_{4},\alpha_{5},\alpha_{6}\},\{\alpha_{1},\alpha_{2},\alpha_5,\alpha_7\}\}}$ et $\mathfrak{q_{2}}=\mathfrak{q}_{\{\{\alpha_{2},\alpha_{3},\alpha_{4},\alpha_{5},\alpha_6\},\{\alpha_{1},\alpha_{2},\alpha_3,\alpha_{5},\alpha_7\}\}} $\\
sont de rang nul et d'indice strictement positif:\\
Soit $f=K(v,.)\in A$, o\`{u} $v=a_{1}\,x_{-\alpha_{1}}+a_{2}\,x_{-\alpha_{2}}+a_{3}\,x_{-\alpha_5}+a_{4}\,x_{-\alpha_{7}}+b_{1}\,x_{\alpha_{4}}+b_{2}\,
x_{\alpha_{6}}+b_{3}\,x_{\alpha_2+\alpha_3+\alpha_{4}}+b_{4}\,x_{\alpha_2+\alpha_{3}+2\alpha_{4}+2\alpha_{5}+\alpha_6}$ (les $a_{i}$  et les $b_{j}$ sont tous non nuls). Dans ce cas, on a $\mathfrak{q_{1}}(f)=\mathbb{K}x$ avec\\
 $x=x_{\alpha_{5}}+\lambda_{1}\,x_{-\alpha_{4}}+\lambda_{2}\,x_{-\alpha_{6}}+\lambda_{3}\, x_{-\alpha_2-\alpha_3-\alpha_{4}}+\lambda_{4}\, x_{-\alpha_{4}-\alpha_{5}-\alpha_{6}}+\lambda_{5}\,x_{-\alpha_2-\alpha_{3}-2\alpha_4-\alpha_{5}} +\lambda_{6}\,x_{-\alpha_{2}-\alpha_3-\alpha_{4}-\alpha_{5}-\alpha_6}+\lambda_{7}\,x_{-\alpha_2-\alpha_{3}-2\alpha_{4}-2\alpha_{5}-\alpha_{6}},$\\
\\
o\`{u} $\lambda_{1},...,\lambda_{7}$ sont des éléments non nuls de $\mathbb{K}$ (dépendant du choix des $a_{i}$ et des $b_{i}$). Soit $h \in \mathfrak{h}$ tel que  $$\alpha_3(h)=\alpha_4(h)=\alpha_6(h)=-\alpha_1(h)=-\alpha_2(h)=-\alpha_5(h)=-\alpha_7(h)=1.$$
On a $h.f=-f$ et $[h,x]=-x$ de sorte que l'algèbre de Lie $\mathfrak{q_{1}}$ est non stable.\\

Soit $f=K(v,.)\in A$, o\`{u} $v=a_{1}\,x_{-\alpha_{1}-\alpha_3}+a_{2}\,x_{-\alpha_{2}}+a_{3}\,x_{-\alpha_5}+a_{4}\,x_{-\alpha_{7}}+b_{1}\,x_{\alpha_{4}}+b_{2}\,
x_{\alpha_{6}}+b_{3}\,x_{\alpha_2+\alpha_3+\alpha_{4}}+b_{4}\,x_{\alpha_2+\alpha_{3}+2\alpha_{4}+2\alpha_{5}+\alpha_6}$ (les $a_{i}$  et les $b_{j}$ sont tous non nuls). Dans ce cas, on vérifie que  $\mathfrak{q_{2}}(f)= \mathfrak{q_{1}}(f)$. Par suite et d'après ce qui précède, l'algèbre de Lie $\mathfrak{q_{2}}$ est non stable.
\\
\newpage
Supposons   $\pi^{\prime}$ de type $D_{6}$,  on a $\pi^{\prime}=\{\alpha_{2},\alpha_{3},\alpha_{4},\alpha_{5},\alpha_{6},\alpha_{7}\}$. Ici, on a $\mathcal{E}_{\pi^{\prime}}=\{\alpha_{2}+\alpha_{3}+2\alpha_{4}+2\alpha_{5}+2\alpha_{6}+\alpha_{7},\alpha_{2}+\alpha_{3}+2\alpha_{4}+\alpha_{5},
\alpha_{2},\alpha_{3},\alpha_{5},\alpha_{7}\}$:\\
\\
\begin{tabular}{|l|l|l|l|l|l|l|c|c|c|}
\hline
&$Type$ & $ \pi^{\prime\prime}$ & $\ind(\mathfrak{q}_{\pi^{\prime}, \pi^{\prime\prime}})$ & $\rang(\mathfrak{q}_{\pi^{\prime}, \pi^{\prime\prime}})$ \\
\hline
&$A_{1}^{2}$& $\{\alpha_1,\alpha_4\}$ & 1 & 0\\
\hline
&$A_{1}^{2}$& $\{\alpha_1,\alpha_6\}$ & 1 & 0\\
\hline
&$A_{1}\times A_{2}$& $\{\alpha_1,\alpha_2,\alpha_4\}$ & 1 & 1\\
\hline
&$A_{1}\times A_{2}$& $\{\alpha_1,\alpha_4,\alpha_5\}$ & 1 & 1\\
\hline
&$A_{1}\times A_{2}$& $\{\alpha_1,\alpha_5,\alpha_6\}$ & 1 & 1\\
\hline
&$A_{1}\times A_{2}$& $\{\alpha_1,\alpha_6,\alpha_7\}$ & 1 & 1\\
\hline
&$A_{1}\times A_{2}$& $\{\alpha_1,\alpha_3,\alpha_6\}$ & 1 & 0\\
\hline
&$A_{2}\times A_{2}$& $\{\alpha_1,\alpha_3,\alpha_5,\alpha_6\}$ & 1 & 1\\
\hline
&$A_{2}\times A_{2}$& $\{\alpha_1,\alpha_3,\alpha_6,\alpha_7\}$ & 1 & 1\\
\hline
&$A_{4}$& $\{\alpha_1,\alpha_2,\alpha_3,\alpha_4\}$ & 1 & 1\\
\hline
&$A_{4}$& $\{\alpha_1,\alpha_3,\alpha_4,\alpha_5\}$ & 1 & 1\\
\hline
&$A_{1}^{3}$& $\{\alpha_1,\alpha_4,\alpha_6\}$ & 2 & 0\\
\hline
&$A_{1}^{2}\times A_{2}$& $\{\alpha_1,\alpha_{4}, \alpha_6,\alpha_7\}$ & 2 & $1\leq \rang(\mathfrak{q}_{\pi^{\prime}, \pi^{\prime\prime}}) \leq 2$\\
\hline
&$A_{1}^{2}\times A_{2}$&$\{\alpha_1, \alpha_2,\alpha_4,\alpha_6\}$ & 2 & $1\leq \rang(\mathfrak{q}_{\pi^{\prime}, \pi^{\prime\prime}}) \leq 2$\\
\hline
&$A_{1}\times A_{2}^{2}$&$\{\alpha_1, \alpha_2,\alpha_4,\alpha_6,\alpha_7\}$ & 2 & 2\\
\hline
&$A_{1}\times A_{3}$&$\{\alpha_1, \alpha_5,\alpha_6,\alpha_7\}$ & 2 & 2\\
\hline
&$A_{1}\times A_{3}$&$\{\alpha_1, \alpha_2,\alpha_4,\alpha_5\}$ & 2 & 2\\
\hline
&$A_{1}\times A_{4}$&$\{\alpha_1, \alpha_4,\alpha_5,\alpha_6,\alpha_7\}$ & 2& 0\\
\hline
&$A_{1}\times A_{4}$&$\{\alpha_1, \alpha_2,\alpha_4,\alpha_5,\alpha_6\}$ & 2 & 0\\
\hline
&$A_{1}\times A_{4}$&$\{\alpha_1, \alpha_2,\alpha_3,\alpha_4,\alpha_6\}$ & 2 & $1\leq \rang(\mathfrak{q}_{\pi^{\prime}, \pi^{\prime\prime}}) \leq 2$\\
\hline
&$A_{2}\times A_{3}$&$\{\alpha_1, \alpha_3,\alpha_5,\alpha_6,\alpha_7\}$ & 2 & 2\\
\hline
&$A_{2}\times A_{4}$&$\{\alpha_1, \alpha_2,\alpha_3,\alpha_4,\alpha_6,\alpha_7\}$ & 2 & 2\\
\hline
&$A_{5}$&$\{\alpha_1, \alpha_3,\alpha_4,\alpha_5,\alpha_6\}$ & 2 & 2\\
\hline
&$A_{6}$&$\{\alpha_1, \alpha_3,\alpha_4,\alpha_5,\alpha_6,\alpha_7\}$ & 2 & 2\\
\hline
&$E_{6}$&$\{\alpha_1, \alpha_2,\alpha_3,\alpha_4,\alpha_5,\alpha_6\}$ & 3 & 3\\
\hline
\end{tabular}\\
\\

Par exemple et d'après le tableau ci-dessus, $\mathfrak{q}=\mathfrak{q}_{\{\{\alpha_2,\alpha_{3},\alpha_{4},\alpha_{5},\alpha_{6},\alpha_7\},\{\alpha_{1},\alpha_{4},\alpha_{6}\}\}}$ est une sous-algèbre biparabolique de rang nul et d'indice strictement positif:\\
Soit $f=K(v,.)\in A$, o\`{u} $v=a_{1}\,x_{-\alpha_{1}}+a_{2}\,x_{-\alpha_{4}}+a_{3}\,x_{-\alpha_{6}}+b_{1}\,x_{\alpha_{2}}+b_{2}\,
x_{\alpha_{3}}+b_{3}\,x_{\alpha_{5}}+b_{4}\,x_{\alpha_{7}}+b_{5}\,x_{\alpha_{2}+\alpha_{3}+2\alpha_{4}+\alpha_{5}}+b_{6}\,x_{\alpha_{2}+\alpha_{3}+2\alpha_{4}+2\alpha_{5}+2\alpha_{6}+\alpha_{7}}$ (les $a_{i}$  et les $b_{j}$ sont tous non nuls). Dans ce cas, on a $\mathfrak{q}(f)=\mathbb{K}x \oplus \mathbb{K} y$ avec\\
 $x=x_{\alpha_{4}}+\lambda_{1}\,x_{-\alpha_{2}}+\lambda_{2}\,x_{-\alpha_{3}}+\lambda_{3}\, x_{-\alpha_5}+\lambda_{4}\, x_{-\alpha_{2}-\alpha_{3}-\alpha_{4}}+\lambda_{5}\, x_{-\alpha_{2}-\alpha_{4}-\alpha_{5}}+\lambda_{6}\,x_{\alpha_3-\alpha_{4}-\alpha_{5}} +\lambda_{7}\,x_{-\alpha_{2}-\alpha_3-2\alpha_{4}-\alpha_{5}}$\\
et\\
 $y=x_{\alpha_{6}}+\lambda^{\prime}_{1}\,x_{-\alpha_{5}}+\lambda^{\prime}_{2}\,x_{-\alpha_{7}}+\lambda^{\prime}_{3}\, x_{-\alpha_5-\alpha_6-\alpha_7}+\lambda^{\prime}_{4}\, x_{-\alpha_{2}-\alpha_{3}-2\alpha_{4}-\alpha_5}+\lambda^{\prime}_{5}\, x_{-\alpha_{2}-\alpha_3-2\alpha_4-2\alpha_{5}-\alpha_{6}}+\lambda^{\prime}_{6}\,x_{-\alpha_{2}-\alpha_3-2\alpha_4-\alpha_{5}-\alpha_{6}-\alpha_{7}} +\lambda^{\prime}_{7}\,x_{-\alpha_{2}-\alpha_3-2\alpha_4-2\alpha_{5}-2\alpha_{6}-\alpha_{7}}$\\
\\
o\`{u} les $\lambda_{i}$ et $\lambda^{\prime}_{i}$ sont des éléments non nuls de $\mathbb{K}$ (dépendant du choix des $a_{i}$ et des $b_{i}$). Soit $h \in \mathfrak{h}$ tel que  $\alpha_2(h)=\alpha_3(h)=\alpha_5(h)=\alpha_7(h)=-\alpha_1(h)=-\alpha_4(h)=-\alpha_6(h)=1$. On a $h.f=-f$, $[h,x]=-x$ et $[h,y]=-y$ de sorte que l'algèbre de Lie $\mathfrak{q}$ est non stable.\\

On procède de la même manière pour les autres sous-algèbres biparaboliques de rang nul  et d'indice strictement positif.

Supposons  $\pi^{\prime}$ de type $E_{6}$, on a  $\pi^{\prime}=\{\alpha_1,\alpha_{2},\alpha_{3},\alpha_{4},\alpha_{5},\alpha_{6}\}$. Ici, on a $\mathcal{E}_{\pi^{\prime}}=\{\alpha_{1}+2\alpha_{2}+2\alpha_{3}+3\alpha_{4}+2\alpha_{5}+\alpha_{6},\alpha_{1}+\alpha_{3}+\alpha_{4}+\alpha_{5}+\alpha_6,
\alpha_{3}+\alpha_{4}+\alpha_{5},\alpha_{4}\}$:\\
\\
\begin{tabular}{|l|l|l|c|c|c|}
\hline
&$Type$ & $ \pi^{\prime\prime}$ & $\ind(\mathfrak{q}_{\pi^{\prime}, \pi^{\prime\prime}})$ & $\rang(\mathfrak{q}_{\pi^{\prime}, \pi^{\prime\prime}})$ \\
\hline
&$A_{1}^{4}$& $\{\alpha_1,\alpha_2,\alpha_5,\alpha_7\}$ & 1 & 0\\
\hline
&$A_{1}^{3}\times A_{2}$&$\{\alpha_1,\alpha_2, \alpha_3,\alpha_5,\alpha_7\}$ & 1 & 0\\
\hline
&$A_{1}^{2}\times A_{3}$&$\{\alpha_2, \alpha_3,\alpha_5,\alpha_6,\alpha_7\}$ & 1 & 0\\
\hline
&$A_{1}\times  A_{2}\times A_{3}$&$\{\alpha_1, \alpha_2,\alpha_3,\alpha_5,\alpha_6,\alpha_7\}$ & 1 & 0\\
\hline
&$A_{1}\times A_{5}$&$\{\alpha_1, \alpha_2,\alpha_4,\alpha_5,\alpha_6,\alpha_7\}$ & 1 & 1\\
\hline
\end{tabular}\\
\\

Par exemple et d'après le tableau ci-dessus, $\mathfrak{q}=\mathfrak{q}_{\{\{\alpha_1,\alpha_{2},\alpha_{3},\alpha_{4},\alpha_{5},\alpha_{6}\},\{\alpha_{2},\alpha_{3},\alpha_5,\alpha_{6},\alpha_7\}\}}$ est une sous-algèbre biparabolique de rang nul et d'indice strictement positif:\\
Soit $f=K(v,.)\in A$, o\`{u} $v=a_{1}\,x_{-\alpha_{1}}+a_{2}\,x_{-\alpha_{2}}+a_{3}\,x_{-\alpha_5}+a_{4}\,x_{-\alpha_{7}}+b_{1}\,x_{\alpha_{4}}+b_{2}\,
x_{\alpha_3+\alpha_{4}+\alpha_5}+b_{3}\,x_{\alpha_1+\alpha_3+\alpha_4+\alpha_5+\alpha_{6}}+b_{4}\,x_{\alpha_1+2\alpha_{2}+2\alpha_{3}+3\alpha_{4}+2\alpha_{5}+\alpha_6}$ (les $a_{i}$  et les $b_{j}$ sont tous non nuls). Dans ce cas, on a $\mathfrak{q}(f)=\mathbb{K}x$ avec\\
 $x=x_{\alpha_{2}}+\lambda_{1}\,x_{-\alpha_{4}}+\lambda_{2}\,x_{-\alpha_3-\alpha_{4}-\alpha_5}+\lambda_{3}\, x_{-\alpha_1-\alpha_3-\alpha_{4}-\alpha_5-\alpha_6}+\lambda_{4}\, x_{-\alpha_{2}-\alpha_3-2\alpha_{4}-\alpha_{5}}+\lambda_{5}\,x_{-\alpha_1-\alpha_{2}-\alpha_3-2\alpha_{4}-\alpha_{5}-\alpha_6} +\lambda_{6}\,x_{-\alpha_1-\alpha_{2}-2\alpha_3-2\alpha_{4}-2\alpha_{5}-\alpha_6}+\lambda_{7}\,x_{-\alpha_1-2\alpha_{2}-2\alpha_{3}-3\alpha_{4}-2\alpha_{5}}-\alpha_6,$\\
\\
o\`{u} $\lambda_{1},...,\lambda_{7}$ sont des éléments non nuls de $\mathbb{K}$ (dépendant du choix des $a_{i}$ et des $b_{i}$). Soit $h \in \mathfrak{h}$ tel que  $$\alpha_3(h)=\alpha_4(h)=\alpha_6(h)=-\alpha_1(h)=-\alpha_2(h)=-\alpha_5(h)=-\alpha_7(h)=1.$$
 On a $h.f=-f$ et $[h,x]=-x$ de sorte que l'algèbre de Lie $\mathfrak{q}$ est non stable.\\

On procède de la même manière pour les autres sous-algèbres biparaboliques de rang nul  et d'indice strictement positif.\\
\\
\\
\textbf{Cas o\`{u} $\pi^{\prime}$ est non connexe et $\pi^{\prime\prime}$ est non connexe ou tel que $\mathcal{K}(\pi^{\prime\prime})$ est de cardinal inférieur ou égal à 3: }\\

Dans ce cas, il est clair que si la sous-algèbre biparabolique $\mathfrak{q}_{\pi^{\prime}, \pi^{\prime\prime}}$ vérifie la condition $(\ast)$ alors $\pi^{\prime}$ ou $\pi^{\prime\prime}$ est de type $A_{1}^{2}\times A_{1}^{2}$, $A_{1}^{2}\times A_{1}\times A_{2}$, $A_{1}^{2}\times A_{3}$, $A_{1}\times A_{2}\times A_{3}$, $A_{1}\times A_{5}$, $D_{4}\times A_{1}$ ou  $D_{5}\times A_{1}$ .\\

Supposons   $\pi^{\prime}$ de type $A_{1}^{2}\times A_{3}$,   $\pi^{\prime}=\{\alpha_1,\alpha_{2},\alpha_{4},\alpha_{5},\alpha_{7}\}$. Ici, on a $\mathcal{E}_{\pi^{\prime}}=\{\alpha_{2}+\alpha_{4}+\alpha_{5},\alpha_{1},\alpha_{4},\alpha_7\}$:\\
\\
\begin{tabular}{|l|l|c|c|c|}
\hline
&$Type$ & $ \pi^{\prime\prime}$ & $\ind(\mathfrak{q}_{\pi^{\prime}, \pi^{\prime\prime}})$ & $\rang(\mathfrak{q}_{\pi^{\prime}, \pi^{\prime\prime}})$ \\
\hline
&$A_{1}^{2}\times A_{3} $& $\{\alpha_2,\alpha_3,\alpha_5,\alpha_6,\alpha_7\}$ & 1 & 1\\
\hline
&$A_{1}\times A_{2}\times A_{3}$& $\{\alpha_1,\alpha_2, \alpha_3,\alpha_5,\alpha_6,\alpha_7\}$ & 1 & 1\\
\hline
\end{tabular}\\
\\

Supposons   $\pi^{\prime}$ de type $A_{1}\times A_{5}$,   $\pi^{\prime}=\{\alpha_1,\alpha_{2},\alpha_{4},\alpha_{5},\alpha_6,\alpha_{7}\}$. Ici, on a $\mathcal{E}_{\pi^{\prime}}=\{\alpha_{2}+\alpha_{4}+\alpha_{5}+\alpha_{6}+\alpha_{7},\alpha_{4}+\alpha_{5}+\alpha_{6},\alpha_{5},\alpha_{1}\}$:\\
\\
\begin{tabular}{|l|l|c|c|c|}
\hline
&$Type$ & $ \pi^{\prime\prime}$ & $\ind(\mathfrak{q}_{\pi^{\prime}, \pi^{\prime\prime}})$ & $\rang(\mathfrak{q}_{\pi^{\prime}, \pi^{\prime\prime}})$ \\
\hline
&$A_{1}^{2}\times A_{3} $& $\{\alpha_2,\alpha_3,\alpha_5,\alpha_6,\alpha_7\}$ & 1 & 1\\
\hline
&$A_{1}\times A_{2}\times A_{3}$& $\{\alpha_1,\alpha_2, \alpha_3,\alpha_5,\alpha_6,\alpha_7\}$ & 1 & 1\\
\hline
\end{tabular}\\
\\

Supposons   $\pi^{\prime}$ de type $D_{4}\times A_{1}$,   $\pi^{\prime}=\{\alpha_2,\alpha_{3},\alpha_{4},\alpha_{5},\alpha_{7}\}$. Ici, on a $\mathcal{E}_{\pi^{\prime}}=\{\alpha_{2}+\alpha_{3}+2\alpha_{4}+\alpha_{5},\alpha_{2},\alpha_{3},\alpha_{5},\alpha_7\}$:\\
\\
\begin{tabular}{|l|l|l|l|l|l|c|c|c|}
\hline
&$Type$ & $ \pi^{\prime\prime}$ & $\ind(\mathfrak{q}_{\pi^{\prime}, \pi^{\prime\prime}})$ & $\rang(\mathfrak{q}_{\pi^{\prime}, \pi^{\prime\prime}})$ \\
\hline
&$A_{1}^{3}$& $\{\alpha_1,\alpha_4,\alpha_6\}$ & 1 & 0\\
\hline
&$A_{1}^{2}\times A_{2}$& $\{\alpha_1,\alpha_2, \alpha_4,\alpha_6\}$ & 1 & 1\\
\hline
&$A_{1}^{2}\times A_{2}$&$\{\alpha_1,\alpha_4,\alpha_6,\alpha_7\}$ & 1 & 0\\
\hline
&$A_{1}\times A_{2}^{2}$&$\{\alpha_1, \alpha_2,\alpha_4,\alpha_6,\alpha_7\}$ & 1 & 1\\
\hline
&$ A_{1}\times A_{3}$&$\{\alpha_1,\alpha_5,\alpha_6,\alpha_7\}$ & 1 & 1\\
\hline
&$A_{2}\times A_{3}$&$\{\alpha_1,\alpha_3,\alpha_5,\alpha_6,\alpha_7\}$ & 1 & 1\\
\hline
&$A_{1}\times A_{4}$&$\{\alpha_1, \alpha_2,\alpha_4,\alpha_5,\alpha_6\}$ & 1 & 1\\
\hline
&$A_{1}\times A_{4}$&$\{\alpha_1, \alpha_2,\alpha_3,\alpha_4,\alpha_6\}$ & 1 & 1\\
\hline
&$A_{1}\times A_{4}$&$\{\alpha_1, \alpha_4,\alpha_5,\alpha_6,\alpha_7\}$ & 1 & 0\\
\hline
&$A_{2}\times A_{4}$&$\{\alpha_1, \alpha_2,\alpha_3,\alpha_4,\alpha_6,\alpha_7\}$ & 1 & 1\\
\hline
&$A_{6}$&$\{\alpha_1, \alpha_3,\alpha_4,\alpha_5,\alpha_6,\alpha_7\}$ & 1 & 1\\
\hline
\end{tabular}\\
\\

Supposons   $\pi^{\prime}$ de type $D_{5}\times A_{1}$,   $\pi^{\prime}=\{\alpha_1,\alpha_2,\alpha_{3},\alpha_{4},\alpha_{5},\alpha_{7}\}$. Ici, on a $\mathcal{E}_{\pi^{\prime}}=\{\alpha_1+\alpha_{2}+2\alpha_{3}+2\alpha_{4}+\alpha_{5},\alpha_{2}+\alpha_{4}+\alpha_{5},\alpha_1,\alpha_4,\alpha_7\}$:\\
\\
\begin{tabular}{|l|l|l|l|l|l|l|c|c|c|}
\hline
&$Type$ & $ \pi^{\prime\prime}$ & $\ind(\mathfrak{q}_{\pi^{\prime}, \pi^{\prime\prime}})$ & $\rang(\mathfrak{q}_{\pi^{\prime}, \pi^{\prime\prime}})$ \\
\hline
&$A_{1}^{3}$& $\{\alpha_2,\alpha_3,\alpha_6\}$ & 1 & 0\\
\hline
&$A_{1}^{2}\times A_{2}$& $\{\alpha_2,\alpha_3, \alpha_5,\alpha_6\}$ & 1 & 0\\
\hline
&$A_{1}^{2}\times A_{2}$&$\{\alpha_2,\alpha_3,\alpha_6,\alpha_7\}$ & 1 & 0\\
\hline
&$A_{1}\times A_{2}^{2}$&$\{\alpha_1, \alpha_2,\alpha_3,\alpha_6,\alpha_7\}$ & 1 & 1\\
\hline
&$A_{1}\times A_{2}^{2} $&$\{\alpha_1, \alpha_2,\alpha_3,\alpha_5,\alpha_6\}$ & 1 & 1\\
\hline
&$A_{1}\times A_{3}$&$\{\alpha_2,\alpha_5,\alpha_6,\alpha_7\}$ & 1 & 1\\
\hline
&$A_{1}\times A_{3}$&$\{\alpha_3,\alpha_5,\alpha_6,\alpha_7\}$ & 1 & 0\\
\hline
&$A_{2}\times A_{3}$&$\{\alpha_1, \alpha_3,\alpha_5,\alpha_6,\alpha_7\}$ & 1 & 1\\
\hline
&$A_{1}\times A_{4}$&$\{\alpha_1, \alpha_2,\alpha_3,\alpha_4,\alpha_6\}$ & 1 & 1\\
\hline
&$A_{2}\times A_{4}$&$\{\alpha_1, \alpha_2,\alpha_3,\alpha_4,\alpha_6,\alpha_7\}$ & 1 & 1\\
\hline
&$ A_{5}$&$\{\alpha_2, \alpha_4,\alpha_5,\alpha_6,\alpha_7\}$ & 1 & 1\\
\hline
&$ A_{5}$&$\{\alpha_3, \alpha_4,\alpha_5,\alpha_6,\alpha_7\}$ & 1 & 0\\
\hline
&$ A_{6}$&$\{\alpha_1, \alpha_3,\alpha_4,\alpha_5,\alpha_6,\alpha_7\}$ & 1 & 1\\
\hline
&$A_{1}^{2}\times A_{3} $&$\{\alpha_2, \alpha_3,\alpha_5,\alpha_6,\alpha_7\}$ & 2& $1\leq \rang(\mathfrak{q}_{\pi^{\prime}, \pi^{\prime\prime}}) \leq 2$\\
\hline
&$A_{1}\times A_{2}\times A_{3} $&$\{\alpha_1,\alpha_2, \alpha_3,\alpha_5,\alpha_6,\alpha_7\}$ & 2 & 2\\
\hline
\end{tabular}\\
\\
\\
\\
\textbf{Calculs dans $E_{8}$ }:\\

$$\begin{Dynkin}
\Dbloc{\Dtext{l}{E_8:}}
\Dbloc{\Dcirc \Deast\Dtext{t}{\alpha_{1}} }
 \Dbloc{\Dwest\Dcirc \Deast\Dtext{t}{\alpha_{3}} }
 \Dbloc{\Dwest \Dcirc \Dsouth \Deast\Dtext{t}{\alpha_{4}} }
 \Dbloc{\Dwest \Dcirc \Deast\Dtext{t}{\alpha_{5}}}
\Dbloc{ \Dwest  \Deast \Dcirc\Dtext{t}{\alpha_{6}} }
\Dbloc{ \Dwest \Deast \Dcirc\Dtext{t}{\alpha_{7}} }
\Dbloc{ \Dwest \Dcirc\Dtext{t}{\alpha_{8}} }
\Dskip
\Dspace \Dspace \Dspace \Dbloc{\Dnorth \Dcirc\Dtext{l}{\alpha_{2}}}
\end{Dynkin}$$
\\

Remarquons tout d'abord que si la sous-algèbre biparabolique $\mathfrak{q}_{\pi^{\prime}, \pi^{\prime\prime}}$ vérifie la condition $(\ast)$ alors $\pi^{\prime}$ est tel que $\mathcal{K}(\pi^{\prime})$ est de cardinal supérieur ou égal à 5. \\
\\
\\
\textbf{Cas o\`{u} $\pi^{\prime}$ est connexe: }
\\

Supposons  que  $\pi^{\prime}$ de type $D_{6}$,   $\pi^{\prime}=\{\alpha_{2},\alpha_{3},\alpha_{4},\alpha_{5},\alpha_{6},\alpha_{7}\}$. Ici, on a $\mathcal{E}_{\pi^{\prime}}=\{\alpha_{2}+\alpha_{3}+2\alpha_{4}+2\alpha_{5}+2\alpha_{6}+\alpha_{7},\alpha_{2}+\alpha_{3}+2\alpha_{4}+\alpha_{5},\alpha_{7},
\alpha_{2},\alpha_{3},\alpha_{5}\}$.\\

On commence tout d'abord par le cas o\`{u} le cardinal de $\mathcal{K}(\pi^{\prime\prime})$ est égal à $3$:\\
\\
\begin{tabular}{|l|l|l|l|l|l|l|c|c|c|}
\hline
&$Type$ & $ \pi^{\prime\prime}$ & $\ind(\mathfrak{q}_{\pi^{\prime}, \pi^{\prime\prime}})$ & $\rang(\mathfrak{q}_{\pi^{\prime}, \pi^{\prime\prime}})$ \\
\hline
&$A_{1}^{3}$& $\{\alpha_1,\alpha_4,\alpha_8\}$ &  1& 0\\
\hline
&$A_{1}^{3}$& $\{\alpha_1,\alpha_{6}, \alpha_8\}$ & 1 &0 \\
\hline
&$A_{1}^{2}\times A_{2}$&$\{\alpha_1, \alpha_4,\alpha_7,\alpha_8\}$ & 1 & 0\\
\hline
&$A_{1}^{2}\times A_{2}$&$\{\alpha_1, \alpha_3,\alpha_6,\alpha_8\}$ & 1 & 0\\
\hline
&$A_{1}\times A_{2}^{2}$&$\{\alpha_1, \alpha_2,\alpha_4,\alpha_7,\alpha_8\}$ & 1 & 1\\
\hline
&$A_{1}\times A_{2}^{2}$&$\{\alpha_1, \alpha_4,\alpha_5,\alpha_7,\alpha_8\}$ & 1 & 1\\
\hline
&$A_{1}\times A_{2}^{2}$&$\{\alpha_1, \alpha_3,\alpha_5,\alpha_6,\alpha_8\}$ &  1&1 \\
\hline
&$A_{1}\times A_{4}$&$\{\alpha_1, \alpha_5,\alpha_6,\alpha_7,\alpha_8\}$ & 1 &1 \\
\hline
&$A_{1}\times A_{4}$&$\{\alpha_1, \alpha_2,\alpha_3,\alpha_4,\alpha_8\}$ & 1 & 1\\
\hline
&$A_{1}\times A_{4}$&$\{\alpha_1, \alpha_3,\alpha_4,\alpha_5,\alpha_8\}$ & 1& 1\\
\hline
&$A_{2}\times A_{4}$&$\{\alpha_1, \alpha_3,\alpha_5,\alpha_6,\alpha_7,\alpha_8\}$ & 1 &1 \\
\hline
&$A_{2}\times A_{4}$&$\{\alpha_1, \alpha_2,\alpha_3,\alpha_4,\alpha_7,\alpha_8\}$ & 1 & 1\\
\hline
&$A_{2}\times A_{4}$&$\{\alpha_1, \alpha_3,\alpha_4,\alpha_5,\alpha_7,\alpha_8\}$ & 1 & 1\\
\hline
\end{tabular}\\
\\

Cas o\`{u} le cardinal de $\mathcal{K}(\pi^{\prime\prime})$ est égal à 4:\\
\\
\begin{tabular}{|l|l|l|l|c|c|c|}
\hline
&$Type$ & $ \pi^{\prime\prime}$ & $\ind(\mathfrak{q}_{\pi^{\prime}, \pi^{\prime\prime}})$ & $\rang(\mathfrak{q}_{\pi^{\prime}, \pi^{\prime\prime}})$ \\
\hline
&$A_{1}^{4}$& $\{\alpha_1,\alpha_4,\alpha_6,\alpha_8\}$ & 2 & 0\\
\hline
&$A_{1}^{3} \times A_{2}$& $\{\alpha_1,\alpha_2,\alpha_4,\alpha_6,\alpha_8\}$ &2  &$1\leq \rang(\mathfrak{q}_{\pi^{\prime}, \pi^{\prime\prime}}) \leq 2 $\\
\hline
&$A_{1}^{2}\times A_{3}$&$\{\alpha_1, \alpha_2,\alpha_4,\alpha_5,\alpha_8\}$ &2  & 2\\
\hline
&$A_{1}^{2}\times A_{4}$&$\{\alpha_1, \alpha_2,\alpha_4,\alpha_5,\alpha_6,\alpha_8\}$ & 2 & 0\\
\hline
&$A_{1}^{2}\times A_{4}$&$\{\alpha_1, \alpha_2,\alpha_3,\alpha_4,\alpha_6,\alpha_8\}$ &  2& 1$\leq \rang(\mathfrak{q}_{\pi^{\prime}, \pi^{\prime\prime}}) \leq 2$\\
\hline
&$A_{1}\times A_{2}\times A_{3}$&$\{\alpha_1, \alpha_2,\alpha_4,\alpha_5,\alpha_7,\alpha_8\}$ & 2 & 2\\
\hline
&$A_{1}\times A_{5}$&$\{\alpha_1, \alpha_3,\alpha_4, \alpha_5,\alpha_6,\alpha_8\}$ &2  & 2\\
\hline
&$A_{1}\times A_{5}$&$\{\alpha_1, \alpha_4,\alpha_5,\alpha_6,\alpha_7,\alpha_8\}$ &2  & 2\\
\hline
&$A_{1}\times A_{6}$&$\{\alpha_1, \alpha_2,\alpha_4,\alpha_5,\alpha_6,\alpha_7,\alpha_8\}$ &2  &2 \\
\hline
\end{tabular}\\
\\

Cas o\`{u} le cardinal de $\mathcal{K}(\pi^{\prime\prime})$ est égal à 5:\\
\\
\begin{tabular}{|l|l|l|l|c|c|c|}
\hline
&$Type$ & $ \pi^{\prime\prime}$ & $\ind(\mathfrak{q}_{\pi^{\prime}, \pi^{\prime\prime}})$ & $\rang(\mathfrak{q}_{\pi^{\prime}, \pi^{\prime\prime}})$ \\
\hline
&$D_{5}\times A_{1}$& $\{\alpha_1,\alpha_2,\alpha_3,\alpha_4,\alpha_5,\alpha_8\}$ &3  & 3\\
\hline
&$E_{6}\times A_{1}$& $\{\alpha_1,\alpha_2,\alpha_3,\alpha_4,\alpha_5,\alpha_6,\alpha_8\}$ & 3 &3 \\
\hline
&$D_{5}\times A_{2}$& $\{\alpha_1, \alpha_2,\alpha_3,\alpha_4,\alpha_5,\alpha_7,\alpha_8\}$ & 3 & 3\\
\hline
\end{tabular}\\
\\

Supposons  maintenant  $\pi^{\prime}$ de type $D_{7}$, on a $\pi^{\prime}=\{\alpha_{2},\alpha_{3},\alpha_{4},\alpha_{5},\alpha_{6},\alpha_{7},\alpha_{8}\}$. Ici, on a $\mathcal{E}_{\pi^{\prime}}=\{\alpha_{2}+\alpha_{3}+2\alpha_{4}+2\alpha_{5}+2\alpha_{6}+2\alpha_{7}+\alpha_{8},\alpha_{2}+\alpha_{3}+2\alpha_{4}+2\alpha_{5}+
\alpha_{6},\alpha_{2}+\alpha_{3}+\alpha_{4},\alpha_{6},\alpha_{4},\alpha_{8}\}$.\\

On commence tout d'abord par le cas o\`{u} le cardinal de $\mathcal{K}(\pi^{\prime\prime})$ est égal à $3$:\\
\\
\begin{tabular}{|l|l|l|l|l|l|l|c|c|c|}
\hline
&$Type$ & $ \pi^{\prime\prime}$ & $\ind(\mathfrak{q}_{\pi^{\prime}, \pi^{\prime\prime}})$ & $\rang(\mathfrak{q}_{\pi^{\prime}, \pi^{\prime\prime}})$ \\
\hline
&$A_{1}^{3}$& $\{\alpha_1,\alpha_2,\alpha_5\}$ & 1 & 0\\
\hline
&$A_{1}^{3}$& $\{\alpha_1,\alpha_{2}, \alpha_7\}$ & 1 & 0\\
\hline
&$A_{1}^{2}\times A_{2}$&$\{\alpha_1, \alpha_2,\alpha_5,\alpha_6\}$ & 1 &1 \\
\hline
&$A_{1}^{2}\times A_{2}$&$\{\alpha_1, \alpha_2,\alpha_6,\alpha_7\}$ & 1 & 1\\
\hline
&$A_{1}^{2}\times A_{2}$&$\{\alpha_1, \alpha_2,\alpha_7,\alpha_8\}$ & 1 & 1\\
\hline
&$A_{1}^{2}\times A_{2}$&$\{\alpha_1, \alpha_2,\alpha_3,\alpha_5\}$ & 1 & 0\\
\hline
&$A_{1}^{2}\times A_{2}$&$\{\alpha_1, \alpha_2,\alpha_3,\alpha_7\}$ & 1 & 0\\
\hline
&$A_{1}\times A_{2}^{2}$&$\{\alpha_1, \alpha_2,\alpha_4,\alpha_6,\alpha_7\}$ & 1 & 1\\
\hline
&$A_{1}\times A_{2}^{2}$&$\{\alpha_1, \alpha_2,\alpha_4,\alpha_7,\alpha_8\}$ & 1 & 1\\
\hline
&$A_{1}\times A_{2}^{2}$&$\{\alpha_1, \alpha_2,\alpha_3,\alpha_5,\alpha_6\}$ & 1 & 1\\
\hline
&$A_{1}\times A_{2}^{2}$&$\{\alpha_1, \alpha_2,\alpha_3,\alpha_6,\alpha_7\}$ &1  & 1\\
\hline
&$A_{1}\times A_{2}^{2}$&$\{\alpha_1, \alpha_2,\alpha_3,\alpha_7,\alpha_8\}$ & 1 & 1\\
\hline
&$A_{1}\times A_{3}$&$\{\alpha_1, \alpha_3,\alpha_4,\alpha_7\}$ & 1 & 0\\
\hline
&$A_{2}\times A_{3}$&$\{\alpha_1, \alpha_3,\alpha_4,\alpha_6,\alpha_7\}$ & 1 & 1\\
\hline
&$A_{2}\times A_{3}$&$\{\alpha_1, \alpha_3,\alpha_4,\alpha_7,\alpha_8\}$ & 1& 1\\
\hline
&$A_{1}\times A_{4}$&$\{\alpha_1, \alpha_2,\alpha_4,\alpha_5,\alpha_6\}$ & 1 &1 \\
\hline
&$A_{1}\times A_{4}$&$\{\alpha_1, \alpha_2,\alpha_3,\alpha_4,\alpha_7\}$ & 1 & 0\\
\hline
&$A_{1}\times A_{4}$&$\{\alpha_1, \alpha_3,\alpha_4,\alpha_5,\alpha_7\}$ & 1 & 0\\
\hline
&$A_{2}\times A_{4}$&$\{\alpha_1, \alpha_2,\alpha_3,\alpha_4,\alpha_6,\alpha_7\}$ & 1 & 1\\
\hline
&$A_{2}\times A_{4}$&$\{\alpha_1, \alpha_2,\alpha_3,\alpha_4,\alpha_7,\alpha_8\}$ &  1&1 \\
\hline
&$A_{2}\times A_{4}$&$\{\alpha_1, \alpha_3,\alpha_4,\alpha_5,\alpha_7,\alpha_8\}$ &  1&1 \\
\hline
&$A_{6}$&$\{\alpha_1, \alpha_3,\alpha_4,\alpha_5,\alpha_6,\alpha_7\}$ & 1 & 1\\
\hline
\end{tabular}\\
\\
\newpage
Cas o\`{u} le cardinal de $\mathcal{K}(\pi^{\prime\prime})$ est supérieur ou égal à 4:\\
\\
\begin{tabular}{|l|l|l|l|l|l|l|c|c|c|}
\hline
&$Type$ & $ \pi^{\prime\prime}$ & $\ind(\mathfrak{q}_{\pi^{\prime}, \pi^{\prime\prime}})$ & $\rang(\mathfrak{q}_{\pi^{\prime}, \pi^{\prime\prime}})$ \\
\hline
&$A_{1}^{4}$& $\{\alpha_1,\alpha_2,\alpha_5, \alpha_7\}$ &2  & 0\\
\hline
&$A_{1}^{3}\times A_{2}$& $\{\alpha_1,\alpha_{2}, \alpha_5,\alpha_7,\alpha_8\}$ & 2 & $1\leq \rang(\mathfrak{q}_{\pi^{\prime}, \pi^{\prime\prime}}) \leq 2$\\
\hline
&$A_{1}^{3}\times A_{2}$&$\{\alpha_1,\alpha_{3}, \alpha_2,\alpha_5,\alpha_7\}$ & 2 & 0\\
\hline
&$A_{1}^{2}\times A_{2}^{2}$&$\{\alpha_1,\alpha_{2}, \alpha_3,\alpha_5,\alpha_7,\alpha_8\}$ &2  & $1\leq \rang(\mathfrak{q}_{\pi^{\prime}, \pi^{\prime\prime}}) \leq 2$\\
\hline
&$A_{1}^{2}\times A_{3}$&$\{\alpha_1, \alpha_2,\alpha_6,\alpha_7,\alpha_8\}$ & 2 & 2\\
\hline
&$A_{1}\times A_{2}\times A_{3}$&$\{\alpha_1, \alpha_2,\alpha_4,\alpha_6,\alpha_7,\alpha_8\}$ & 2 & 2\\
\hline
&$A_{1}\times A_{2}\times A_{3}$&$\{\alpha_1, \alpha_2,\alpha_3,\alpha_6,\alpha_7,\alpha_8\}$ &  2& 2\\
\hline
&$A_{1}^{2}\times A_{4}$&$\{\alpha_1, \alpha_2,\alpha_5,\alpha_6,\alpha_7,\alpha_8\}$ & 2 &2 \\
\hline
&$A_{1}\times A_{2}\times A_{4}$&$\{\alpha_1, \alpha_2, \alpha_3,\alpha_5,\alpha_6,\alpha_7,\alpha_8\}$ &2  & 2\\
\hline
&$A_{1}\times A_{5}$&$\{\alpha_1, \alpha_2,\alpha_4,\alpha_5,\alpha_6,\alpha_7\}$ & 2 & 2\\
\hline
&$A_{1}\times A_{6}$&$\{\alpha_1, \alpha_2,\alpha_4,\alpha_5,\alpha_6,\alpha_7,\alpha_8\}$ & 2 &2 \\
\hline
&$A_{3}\times A_{3}$&$\{\alpha_1, \alpha_3,\alpha_4,\alpha_6,\alpha_7,\alpha_8\}$ & 2 & 2\\
\hline
&$A_{3}\times A_{4}$&$\{\alpha_1,\alpha_2, \alpha_3,\alpha_4,\alpha_6,\alpha_7,\alpha_8\}$ &2  &2 \\
\hline
&$A_{7}$&$\{\alpha_1,\alpha_3,\alpha_4, \alpha_5,\alpha_6,\alpha_7,\alpha_8\}$ & 2 & 2\\
\hline
&$E_{7}$&$\{\alpha_1,\alpha_2,\alpha_3, \alpha_4,\alpha_5,\alpha_6,\alpha_7\}$ & 5 & 5\\
\hline

\end{tabular}\\
\\

Supposons  maintenant   $\pi^{\prime}$ de type $E_{7}$, on a  $\pi^{\prime}=\{\alpha_{1},\alpha_{2},\alpha_{3},\alpha_{4},\alpha_{5},\alpha_{6},\alpha_{7}\}$. Ici, on a $\mathcal{E}_{\pi^{\prime}}=\{2\alpha_1+2\alpha_{2}+3\alpha_{3}+4\alpha_{4}+3\alpha_{5}+2\alpha_{6}+\alpha_{7},\alpha_{2}+\alpha_{3}+2\alpha_{4}+2\alpha_{5}+
2\alpha_{6}+\alpha_7,\alpha_{2}+\alpha_{3}+2\alpha_{4}+\alpha_5,\alpha_{7},\alpha_{2},\alpha_{3},\alpha_5\}$.\\

On commence tout d'abord par le cas o\`{u} le cardinal de $\mathcal{K}(\pi^{\prime\prime})$ est égal à $2$:\\
\\
\\
\begin{tabular}{|l|l|l|l|c|c|c|}
\hline
&$Type$ & $ \pi^{\prime\prime}$ & $\ind(\mathfrak{q}_{\pi^{\prime}, \pi^{\prime\prime}})$ & $\rang(\mathfrak{q}_{\pi^{\prime}, \pi^{\prime\prime}})$ \\
\hline
&$A_{1}^{2}$& $\{\alpha_1,\alpha_8\}$ &1  & 0\\
\hline
&$A_{1}^{2}$& $\{\alpha_4,\alpha_8\}$ & 1 & 0\\
\hline
&$A_{1}^{2}$& $\{\alpha_6,\alpha_8\}$ & 1 &0 \\
\hline
&$A_{1}\times A_{2}$& $\{\alpha_1,\alpha_3,\alpha_8\}$ &  1&1 \\
\hline
&$A_{1}\times A_{2}$& $\{\alpha_3,\alpha_4,\alpha_8\}$ & 1 & 1\\
\hline
&$A_{1}\times A_{2}$& $\{\alpha_2, \alpha_4,\alpha_8\}$ &1  & 1\\
\hline
&$A_{1}\times A_{2}$& $\{\alpha_4,\alpha_5,\alpha_8\}$ & 1 & 1\\
\hline
&$A_{1}\times A_{2}$& $\{\alpha_5,\alpha_6,\alpha_8\}$ & 1 & 1\\
\hline
&$A_{1}\times A_{2}$& $\{\alpha_1,\alpha_7,\alpha_8\}$ & 1 & 0\\
\hline
&$A_{1}\times A_{2}$& $\{\alpha_4,\alpha_7,\alpha_8\}$ & 1 & 0\\
\hline
&$A_{2}\times A_{2}$& $\{\alpha_1,\alpha_3,\alpha_7,\alpha_8\}$ & 1 & 1\\
\hline
&$A_{2}\times A_{2}$& $\{\alpha_3,\alpha_4,\alpha_7,\alpha_8\}$ & 1 & 1\\
\hline
&$A_{2}\times A_{2}$& $\{ \alpha_2,\alpha_4,\alpha_7,\alpha_8\}$ & 1 & 1\\
\hline
&$A_{2}\times A_{2}$& $\{\alpha_4,\alpha_5,\alpha_7,\alpha_8\}$ & 1 & 1\\
\hline
&$A_{4}$& $\{\alpha_5,\alpha_6,\alpha_7,\alpha_8\}$ & 1 & 1\\
\hline
\end{tabular}\\
\\
\newpage
Cas o\`{u} le cardinal de $\mathcal{K}(\pi^{\prime\prime})$ est  égal à 3:\\
\\
\begin{tabular}{|l|l|l|l|l|l|l|c|c|c|}
\hline
&$Type$ & $ \pi^{\prime\prime}$ & $\ind(\mathfrak{q}_{\pi^{\prime}, \pi^{\prime\prime}})$ & $\rang(\mathfrak{q}_{\pi^{\prime}, \pi^{\prime\prime}})$ \\
\hline
&$A_{1}^{3}$& $\{\alpha_1,\alpha_4,\alpha_8\}$ & 2 & 0\\
\hline
&$A_{1}^{3}$& $\{\alpha_1,\alpha_{6}, \alpha_8\}$ & 2 & 0\\
\hline
&$A_{1}^{3}$&$\{ \alpha_4,\alpha_6,\alpha_8\}$ & 2 & 0\\
\hline
&$A_{1}^{2}\times A_{2}$&$\{\alpha_1, \alpha_2,\alpha_4,\alpha_8\}$ &2  &$1\leq \rang(\mathfrak{q}_{\pi^{\prime}, \pi^{\prime\prime}}) \leq 2 $\\
\hline
&$A_{1}^{2}\times A_{2}$&$\{\alpha_1, \alpha_3,\alpha_6,\alpha_8\}$ & 2 & $1\leq \rang(\mathfrak{q}_{\pi^{\prime}, \pi^{\prime\prime}}) \leq 2$\\
\hline
&$A_{1}^{2}\times A_{2}$&$\{\alpha_3, \alpha_4,\alpha_6,\alpha_8\}$ &2  &$1\leq \rang(\mathfrak{q}_{\pi^{\prime}, \pi^{\prime\prime}}) \leq 2$ \\
\hline
&$A_{1}^{2}\times A_{2}$&$\{\alpha_2, \alpha_4,\alpha_6,\alpha_8\}$ & 2 &$ 1\leq \rang(\mathfrak{q}_{\pi^{\prime}, \pi^{\prime\prime}}) \leq 2$\\
\hline
&$A_{1}^{2}\times A_{2}$&$\{\alpha_1, \alpha_4,\alpha_7,\alpha_8\}$ & 2 & 0\\
\hline
&$A_{1}^{2}\times A_{2}$&$\{\alpha_1, \alpha_4,\alpha_5,\alpha_8\}$ & 2 & $1\leq \rang(\mathfrak{q}_{\pi^{\prime}, \pi^{\prime\prime}}) \leq 2$\\
\hline
&$A_{1}^{2}\times A_{2}$&$\{\alpha_1, \alpha_5,\alpha_6,\alpha_8\}$ & 2 & $1\leq \rang(\mathfrak{q}_{\pi^{\prime}, \pi^{\prime\prime}}) \leq 2$\\
\hline
&$A_{1}\times A_{2}^{2}$&$\{\alpha_1, \alpha_3,\alpha_5,\alpha_6,\alpha_8\}$ &2  & 2\\
\hline
&$A_{1}\times A_{2}^{2}$&$\{\alpha_1, \alpha_2,\alpha_4,\alpha_7,\alpha_8\}$ & 2 &$1\leq \rang(\mathfrak{q}_{\pi^{\prime}, \pi^{\prime\prime}}) \leq 2 $\\
\hline
&$A_{1}\times A_{2}^{2}$&$\{\alpha_1, \alpha_4,\alpha_5,\alpha_7,\alpha_8\}$ & 2 & $1\leq \rang(\mathfrak{q}_{\pi^{\prime}, \pi^{\prime\prime}}) \leq 2$\\
\hline
&$A_{1}\times A_{3}$&$\{\alpha_2, \alpha_3,\alpha_4,\alpha_8\}$ &2  & 2\\
\hline
&$A_{1}\times A_{3}$&$\{\alpha_3, \alpha_4,\alpha_5,\alpha_8\}$ & 2 & 2\\
\hline
&$A_{1}\times A_{3}$&$\{\alpha_2, \alpha_4,\alpha_5,\alpha_8\}$ & 2 &2 \\
\hline
&$A_{2}\times A_{3}$&$\{\alpha_2, \alpha_3,\alpha_4,\alpha_7,\alpha_8\}$ &2 & 2\\
\hline
&$A_{2}\times A_{3}$&$\{\alpha_3, \alpha_4,\alpha_5,\alpha_7,\alpha_8\}$ & 2 & 2\\
\hline
&$A_{2}\times A_{3}$&$\{\alpha_2, \alpha_4,\alpha_5,\alpha_7,\alpha_8\}$ & 2 & 2\\
\hline
&$A_{1}\times A_{4}$&$\{\alpha_1, \alpha_2,\alpha_3,\alpha_4,\alpha_8\}$ & 2 &2 \\
\hline
&$A_{1}\times A_{4}$&$\{\alpha_1, \alpha_3,\alpha_4,\alpha_5,\alpha_8\}$ & 2 & 2\\
\hline
&$A_{1}\times A_{4}$&$\{\alpha_3, \alpha_4,\alpha_5,\alpha_6,\alpha_8\}$ & 2 & 2\\
\hline
&$A_{1}\times A_{4}$&$\{\alpha_1, \alpha_5,\alpha_6,\alpha_7,\alpha_8\}$ & 2 & $1\leq \rang(\mathfrak{q}_{\pi^{\prime}, \pi^{\prime\prime}}) \leq 2$\\
\hline
&$A_{2}\times A_{4}$&$\{\alpha_1, \alpha_2,\alpha_3,\alpha_4,\alpha_7,\alpha_8\}$ & 2 & 2\\
\hline
&$A_{2}\times A_{4}$&$\{\alpha_1, \alpha_3,\alpha_4,\alpha_5,\alpha_7,\alpha_8\}$ & 2 & 2\\
\hline
&$A_{2}\times A_{4}$&$\{\alpha_1, \alpha_3,\alpha_5,\alpha_6,\alpha_7,\alpha_8\}$ & 2 &2 \\
\hline
&$A_{5}$&$\{\alpha_4,\alpha_5,\alpha_6,\alpha_7,\alpha_8\}$ & 2 & 2\\
\hline
&$A_{6}$&$\{\alpha_2, \alpha_4,\alpha_5,\alpha_6,\alpha_7,\alpha_8\}$ & 2 & 2\\
\hline
&$A_{6}$&$\{\alpha_3,\alpha_4,\alpha_5,\alpha_6,\alpha_7,\alpha_8\}$ & 2 & 2\\
\hline
\end{tabular}\\
\\
\\
\newpage
Cas o\`{u} le  cardinal de $\mathcal{K}(\pi^{\prime\prime})$ est supérieur ou égal à 4:\\
\\
\begin{tabular}{|l|l|l|l|l|l|l|c|c|c|}
\hline
&$Type$ & $ \pi^{\prime\prime}$ & $\ind(\mathfrak{q}_{\pi^{\prime}, \pi^{\prime\prime}})$ & $\rang(\mathfrak{q}_{\pi^{\prime}, \pi^{\prime\prime}})$ \\
\hline
&$A_{1}^{4}$& $\{\alpha_1,\alpha_4,\alpha_6, \alpha_8\}$ & 3 & 0\\
\hline
&$A_{1}^{3}\times A_{2}$& $\{\alpha_1,\alpha_{2}, \alpha_4,\alpha_6,\alpha_8\}$ &3 &$1\leq \rang(\mathfrak{q}_{\pi^{\prime}, \pi^{\prime\prime}}) \leq 3 $ \\
\hline
&$A_{1}^{2}\times A_{3}$&$\{\alpha_2,\alpha_{3}, \alpha_4,\alpha_6,\alpha_8\}$ & 3 &$1\leq \rang(\mathfrak{q}_{\pi^{\prime}, \pi^{\prime\prime}}) \leq 3$ \\
\hline
&$A_{1}^{2}\times A_{3}$&$\{\alpha_1,\alpha_{2}, \alpha_4,\alpha_5,\alpha_8\}$ & 3 &$1\leq \rang(\mathfrak{q}_{\pi^{\prime}, \pi^{\prime\prime}}) \leq 2 $\\
\hline
&$A_{1}\times A_{2}\times A_{3}$&$\{\alpha_1, \alpha_2,\alpha_4,\alpha_5,\alpha_7,\alpha_8\}$ &3  &$1\leq \rang(\mathfrak{q}_{\pi^{\prime}, \pi^{\prime\prime}}) \leq 3$ \\
\hline
&$A_{1}^{2}\times A_{4}$&$\{\alpha_1, \alpha_2,\alpha_3,\alpha_4,\alpha_6,\alpha_8\}$ & 3 & $1\leq \rang(\mathfrak{q}_{\pi^{\prime}, \pi^{\prime\prime}}) \leq 3$\\
\hline
&$A_{1}^{2}\times A_{4}$&$\{\alpha_1, \alpha_2,\alpha_4,\alpha_5,\alpha_6,\alpha_8\}$ & 3 & $1\leq \rang(\mathfrak{q}_{\pi^{\prime}, \pi^{\prime\prime}}) \leq 3$\\
\hline
&$A_{1}\times A_{5}$&$\{\alpha_1, \alpha_3,\alpha_4,\alpha_5,\alpha_6,\alpha_8\}$ & 3 & $1\leq \rang(\mathfrak{q}_{\pi^{\prime}, \pi^{\prime\prime}}) \leq 3$\\
\hline
&$A_{1}\times A_{5}$&$\{\alpha_1, \alpha_4, \alpha_5,\alpha_6,\alpha_7,\alpha_8\}$ & 3 &$1\leq \rang(\mathfrak{q}_{\pi^{\prime}, \pi^{\prime\prime}}) \leq 3$ \\
\hline
&$A_{1}\times A_{6}$&$\{\alpha_1, \alpha_2,\alpha_4,\alpha_5,\alpha_6,\alpha_7,\alpha_8\}$ & 3 & $1\leq \rang(\mathfrak{q}_{\pi^{\prime}, \pi^{\prime\prime}}) \leq 3$\\
\hline
&$D_{5}\times A_{1}$&$\{\alpha_1, \alpha_2,\alpha_3,\alpha_4,\alpha_5,\alpha_8\}$ &  4& 4\\
\hline
&$D_{5}\times A_{1}$&$\{\alpha_2, \alpha_3,\alpha_4,\alpha_5,\alpha_6,\alpha_8\}$ & 4 & 4\\
\hline
&$E_{6}\times A_{1}$&$\{\alpha_1,\alpha_2, \alpha_3,\alpha_4,\alpha_5,\alpha_6,\alpha_8\}$ & 4 &4 \\
\hline
&$D_{5}\times A_{2}$&$\{\alpha_1,\alpha_2,\alpha_3, \alpha_4,\alpha_5,\alpha_7,\alpha_8\}$ & 4 & 4\\
\hline
&$D_{7}$&$\{\alpha_2,\alpha_3, \alpha_4,\alpha_5,\alpha_6,\alpha_7,\alpha_8\}$ & 5 & 5\\
\hline
\end{tabular}\\
\\
\\
\\
\textbf{Cas o\`{u} $\pi^{\prime}$ est  non connexe et $\pi^{\prime\prime}$ est non connexe ou tel que $\mathcal{K}(\pi^{\prime\prime})$  est de cardinal inférieur ou égal à 4: }
\\

Dans ce cas, il est clair que si la sous-algèbre biparabolique $\mathfrak{q}_{\pi^{\prime}, \pi^{\prime\prime}}$ vérifie la condition $(\ast)$ alors $\pi^{\prime}$ ou $\pi^{\prime\prime}$ est de type $D_{4}\times A_{1}$, $D_{4}\times A_{2}$, $D_{5}\times A_{1}$, $D_{5}\times A_{2}$ ou $E_{6}\times A_{1}$.\\
\\
1. Supposons   $\pi^{\prime}$ de type $D_{4}\times A_{1}$:   $\pi^{\prime}=\{\alpha_2,\alpha_{3},\alpha_{4},\alpha_{5},\alpha_{7}\}$ ou $\pi^{\prime}=\{\alpha_2,\alpha_{3},\alpha_{4},\alpha_{5},\alpha_{8}\}$.\\
a. Soit  $\pi^{\prime}=\{\alpha_2,\alpha_{3},\alpha_{4},\alpha_{5},\alpha_{7}\}$. Ici, on a $\mathcal{E}_{\pi^{\prime}}=\{\alpha_{2}+\alpha_{3}+2\alpha_{4}+\alpha_{5},\alpha_{2},\alpha_{3},\alpha_{5},\alpha_7\}$:\\
\\
\begin{tabular}{|l|l|l|l|l|l|c|c|c|}
\hline
&$Type$ & $ \pi^{\prime\prime}$ & $\ind(\mathfrak{q}_{\pi^{\prime}, \pi^{\prime\prime}})$ & $\rang(\mathfrak{q}_{\pi^{\prime}, \pi^{\prime\prime}})$ \\
\hline
&$A_{1}^{4}$& $\{\alpha_1,\alpha_4,\alpha_6,\alpha_8\}$ & 1 & 0\\
\hline
&$A_{1}^{3}\times A_{2}$& $\{\alpha_1,\alpha_2, \alpha_4,\alpha_6,\alpha_8\}$ & 1 & 1\\
\hline
&$A_{1}^{2}\times A_{4}$&$\{\alpha_1,\alpha_2,\alpha_4,\alpha_5,\alpha_6,\alpha_8\}$ & 1 & 1\\
\hline
&$A_{1}^{2}\times A_{4}$&$\{\alpha_1, \alpha_2,\alpha_3,\alpha_4,\alpha_6,\alpha_8\}$ & 1 & 1\\
\hline
&$A_{1}\times A_{5}$&$\{\alpha_1,\alpha_4,\alpha_5,\alpha_6,\alpha_7,\alpha_8\}$ & 1 & 1\\
\hline
&$A_{1}\times A_{6}$&$\{\alpha_1, \alpha_2,\alpha_4,\alpha_5,\alpha_6,\alpha_7,\alpha_8\}$ & 1 & 0\\
\hline
\end{tabular}\\
\\
\\
b. Soit  $\pi^{\prime}=\{\alpha_2,\alpha_{3},\alpha_{4},\alpha_{5},\alpha_{8}\}$. Ici, on a $\mathcal{E}_{\pi^{\prime}}=\{\alpha_{2}+\alpha_{3}+2\alpha_{4}+\alpha_{5},\alpha_{2},\alpha_{3},\alpha_{5},\alpha_8\}$:\\
\\
\begin{tabular}{|l|l|l|l|l|l|c|c|c|}
\hline
&$Type$ & $ \pi^{\prime\prime}$ & $\ind(\mathfrak{q}_{\pi^{\prime}, \pi^{\prime\prime}})$ & $\rang(\mathfrak{q}_{\pi^{\prime}, \pi^{\prime\prime}})$ \\
\hline
&$A_{1}^{2}\times A_{3}$& $\{\alpha_1,\alpha_4,\alpha_6,\alpha_7,\alpha_8\}$ & 1 & 0\\
\hline
&$A_{1}\times A_{2}\times A_{3}$& $\{\alpha_1,\alpha_2, \alpha_4,\alpha_6,\alpha_7,\alpha_8\}$ & 1 & 1\\
\hline
&$A_{1}\times A_{5}$&$\{\alpha_1,\alpha_4,\alpha_5,\alpha_6,\alpha_7,\alpha_8\}$ & 1 & 0\\
\hline
&$A_{1}\times  A_{6}$&$\{\alpha_1, \alpha_2,\alpha_4,\alpha_5,\alpha_6,\alpha_7,\alpha_8\}$ & 1 & 1\\
\hline
&$ A_{3}\times A_{4}$&$\{\alpha_1, \alpha_2,\alpha_3,\alpha_4,\alpha_6,\alpha_7,\alpha_8\}$ & 1 & 1\\
\hline
\end{tabular}\\
\\
\\
2. Supposons   $\pi^{\prime}$ de type $D_{4}\times A_{2}$:   $\pi^{\prime}=\{\alpha_2,\alpha_{3},\alpha_{4},\alpha_{5},\alpha_{7},\alpha_8\}$. Ici, on a $\mathcal{E}_{\pi^{\prime}}=\{\alpha_{2}+\alpha_{3}+2\alpha_{4}+\alpha_{5},\alpha_{2},\alpha_{3},\alpha_{5},\alpha_7+\alpha_8\}$:\\
\\
\begin{tabular}{|l|l|l|l|l|l|c|c|c|}
\hline
&$Type$ & $ \pi^{\prime\prime}$ & $\ind(\mathfrak{q}_{\pi^{\prime}, \pi^{\prime\prime}})$ & $\rang(\mathfrak{q}_{\pi^{\prime}, \pi^{\prime\prime}})$ \\
\hline
&$A_{1}^{4}$& $\{\alpha_1,\alpha_4,\alpha_6,\alpha_8\}$ & 1 & 0\\
\hline
&$A_{1}^{3}\times A_{2}$& $\{\alpha_1,\alpha_2, \alpha_4,\alpha_6,\alpha_8\}$ &1  & 0\\
\hline
&$A_{1}^{2}\times A_{3}$&$\{\alpha_1,\alpha_4,\alpha_6,\alpha_7,\alpha_8\}$ & 1 & 0\\
\hline
&$A_{1}^{2}\times A_{4}$&$\{\alpha_1, \alpha_2,\alpha_4,\alpha_5,\alpha_6,\alpha_8\}$ & 1 & 1\\
\hline
&$A_{1}^{2}\times A_{4}$&$\{\alpha_1, \alpha_2,\alpha_3,\alpha_4,\alpha_6,\alpha_8\}$ & 1 &1 \\
\hline
&$A_{1}\times A_{2}\times A_{3}$&$\{\alpha_1, \alpha_2,\alpha_4,\alpha_6,\alpha_7,\alpha_8\}$ &1  & 1\\
\hline
&$ A_{1}\times A_{5}$&$\{\alpha_1, \alpha_4,\alpha_5,\alpha_6,\alpha_7,\alpha_8\}$ & 1 & 1\\
\hline
&$A_{1}\times A_{6}$&$\{\alpha_1,\alpha_2,\alpha_4,\alpha_5,\alpha_6,\alpha_7,\alpha_8\}$ & 1 & 1\\
\hline
&$A_{3}\times A_{4}$&$\{\alpha_1, \alpha_2,\alpha_3,\alpha_4,\alpha_6,\alpha_7,\alpha_8\}$ & 1 & 1\\
\hline
\end{tabular}\\
\\
\\
3. Supposons   $\pi^{\prime}$ de type $D_{5}\times A_{1}$:   $\pi^{\prime}=\{\alpha_1,\alpha_{2},\alpha_{3},\alpha_{4},\alpha_5,\alpha_{7}\}$ ou $\pi^{\prime}=\{\alpha_1,\alpha_2,\alpha_{3},\alpha_{4},\alpha_{5},\alpha_{8}\}$ ou $\pi^{\prime}=\{\alpha_2,\alpha_{3},\alpha_{4},\alpha_{5},\alpha_6,\alpha_{8}\}$ .\\
a. Soit  $\pi^{\prime}=\{\alpha_1,\alpha_2,\alpha_{3},\alpha_{4},\alpha_{5},\alpha_{7}\}$. Ici, on a $\mathcal{E}_{\pi^{\prime}}=\{\alpha_1+\alpha_{2}+2\alpha_{3}+2\alpha_{4}+\alpha_{5},\alpha_{2}+\alpha_{4}+\alpha_{5},\alpha_1,\alpha_4,\alpha_7\}$:\\
\\
\begin{tabular}{|l|l|l|l|l|l|c|c|c|}
\hline
&$Type$ & $ \pi^{\prime\prime}$ & $\ind(\mathfrak{q}_{\pi^{\prime}, \pi^{\prime\prime}})$ & $\rang(\mathfrak{q}_{\pi^{\prime}, \pi^{\prime\prime}})$ \\
\hline
&$A_{1}^{4}$& $\{\alpha_2,\alpha_3,\alpha_6,\alpha_8\}$ & 1 & 0\\
\hline
&$A_{1}^{3}\times A_{2}$& $\{\alpha_1,\alpha_2, \alpha_3,\alpha_6,\alpha_8\}$ &  1& 1\\
\hline
&$A_{1}^{3}\times A_{2}$& $\{\alpha_2,\alpha_3, \alpha_5,\alpha_6,\alpha_8\}$ &  1& 0\\
\hline
&$A_{1}^{2}\times A_{2}^{2}$& $\{\alpha_1,\alpha_2, \alpha_3,\alpha_5,\alpha_6,\alpha_8\}$ & 1 & 1\\
\hline
&$A_{1}^{2}\times A_{4}$&$\{\alpha_1,\alpha_2,\alpha_3,\alpha_4,\alpha_6,\alpha_8\}$ &1  &1 \\
\hline
&$A_{1}^{2}\times A_{4}$&$\{\alpha_2,\alpha_3,\alpha_5,\alpha_6,\alpha_7,\alpha_8\}$ & 1 & 0\\
\hline
&$A_{1}\times A_{2}\times A_{4}$& $\{\alpha_1,\alpha_2, \alpha_3,\alpha_5,\alpha_6,\alpha_7,\alpha_8\}$ & 1 & 1\\
\hline
&$A_{7}$&$\{\alpha_1, \alpha_3,\alpha_4,\alpha_5,\alpha_6,\alpha_7,\alpha_8\}$ & 1 &0 \\
\hline
\end{tabular}\\
\\
\\
b. Soit  $\pi^{\prime}=\{\alpha_1,\alpha_2,\alpha_{3},\alpha_{4},\alpha_{5},\alpha_{8}\}$. Ici, on a $\mathcal{E}_{\pi^{\prime}}=\{\alpha_1+\alpha_{2}+2\alpha_{3}+2\alpha_{4}+\alpha_{5},\alpha_{2}+\alpha_{4}+\alpha_{5},\alpha_1,\alpha_4,\alpha_8\}$:\\
\\
\begin{tabular}{|l|l|l|l|l|l|c|c|c|}
\hline
&$Type$ & $ \pi^{\prime\prime}$ & $\ind(\mathfrak{q}_{\pi^{\prime}, \pi^{\prime\prime}})$ & $\rang(\mathfrak{q}_{\pi^{\prime}, \pi^{\prime\prime}})$ \\
\hline
&$A_{1}^{2}\times A_{3}$& $\{\alpha_2,\alpha_3,\alpha_6,\alpha_7,\alpha_8\}$ & 1 & 0\\
\hline
&$A_{1}^{2}\times  A_{3}$& $\{\alpha_2, \alpha_3,\alpha_5,\alpha_6,\alpha_7\}$ & 1 & 0\\
\hline
&$A_{1}\times A_{2}\times A_{3}$& $\{\alpha_1,\alpha_2,\alpha_3, \alpha_6,\alpha_7,\alpha_8\}$ &1  & 1\\
\hline
&$A_{1}\times A_{2}\times  A_{3}$& $\{\alpha_1,\alpha_2, \alpha_3,\alpha_5,\alpha_6,\alpha_7\}$ & 1 & 1\\
\hline
&$A_{3}\times A_{4}$&$\{\alpha_1,\alpha_2,\alpha_3,\alpha_4,\alpha_6,\alpha_7,\alpha_8\}$ &1  & 1\\
\hline
&$A_{7}$&$\{\alpha_1, \alpha_3,\alpha_4,\alpha_5,\alpha_6,\alpha_7,\alpha_8\}$ &1  & 1\\
\hline
\end{tabular}\\
\\
\\
c. Soit  $\pi^{\prime}=\{\alpha_2,\alpha_{3},\alpha_{4},\alpha_{5},\alpha_{6}\,\alpha_8\}$. Ici, on a $\mathcal{E}_{\pi^{\prime}}=\{\alpha_2+\alpha_{3}+2\alpha_{4}+2\alpha_{5}+\alpha_{6},\alpha_{2}+\alpha_{3}+\alpha_{4},\alpha_4,\alpha_6,\alpha_8\}$:\\
\\
\begin{tabular}{|l|l|l|l|l|l|c|c|c|}
\hline
&$Type$ & $ \pi^{\prime\prime}$ & $\ind(\mathfrak{q}_{\pi^{\prime}, \pi^{\prime\prime}})$ & $\rang(\mathfrak{q}_{\pi^{\prime}, \pi^{\prime\prime}})$ \\
\hline
&$A_{1}^{4}$& $\{\alpha_1,\alpha_2,\alpha_5,\alpha_7\}$ & 1 & 0\\
\hline
&$A_{1}^{3}\times A_{2}$& $\{\alpha_1,\alpha_2, \alpha_3,\alpha_5,\alpha_7\}$ &1  & 0\\
\hline
&$A_{1}^{3}\times A_{2}$& $\{\alpha_1,\alpha_2, \alpha_5,\alpha_7,\alpha_8\}$ & 1 & 0\\
\hline
&$A_{1}^{2}\times A_{2}^{2}$& $\{\alpha_1,\alpha_2, \alpha_3,\alpha_5,\alpha_7,\alpha_8\}$ &1  &0 \\
\hline
&$A_{1}^{2}\times A_{3}$&$\{\alpha_1,\alpha_2,\alpha_6,\alpha_7,\alpha_8\}$ & 1 & 1\\
\hline
&$A_{1}^{2}\times A_{4}$&$\{\alpha_1,\alpha_2,\alpha_5,\alpha_6,\alpha_7,\alpha_8\}$ & 1 & 0\\
\hline
&$A_{1}\times A_{2}\times A_{3}$& $\{\alpha_1,\alpha_2, \alpha_3,\alpha_6,\alpha_7,\alpha_8\}$ & 1 & 1\\
\hline
&$A_{1}\times A_{2}\times A_{3}$& $\{\alpha_1,\alpha_2, \alpha_4,\alpha_6,\alpha_7,\alpha_8\}$ & 1 & 1\\
\hline
&$A_{1}\times A_{2}\times A_{4}$& $\{\alpha_1,\alpha_2, \alpha_3,\alpha_5,\alpha_6,\alpha_7,\alpha_8\}$ & 1 & 0\\
\hline
&$A_{1}\times A_{6}$& $\{\alpha_1,\alpha_2, \alpha_4,\alpha_5,\alpha_6,\alpha_7,\alpha_8\}$ & 1 & 1\\
\hline
&$A_{3}\times A_{3}$& $\{\alpha_1,\alpha_3, \alpha_4,\alpha_6,\alpha_7,\alpha_8\}$ &  1&1 \\
\hline
&$A_{3}\times A_{4}$& $\{\alpha_1,\alpha_2,\alpha_3, \alpha_4,\alpha_6,\alpha_7,\alpha_8\}$ & 1 & 1\\
\hline
\end{tabular}\\
\\
\\
3. Supposons maintenant  $\pi^{\prime}$ de type $D_{5}\times A_{2}$. Ici, on a  $\pi^{\prime}=\{\alpha_1,\alpha_{2},\alpha_{3},\alpha_{4},\alpha_5,\alpha_{7},\alpha_8\}$ et $\mathcal{E}_{\pi^{\prime}}=\{\alpha_1+\alpha_{2}+2\alpha_{3}+2\alpha_{4}+\alpha_{5},\alpha_{2}+\alpha_{4}+\alpha_{5},\alpha_1,\alpha_4,\alpha_7+\alpha_8\}$:\\
\\
\begin{tabular}{|l|l|l|l|l|l|c|c|c|}
\hline
&$Type$ & $ \pi^{\prime\prime}$ & $\ind(\mathfrak{q}_{\pi^{\prime}, \pi^{\prime\prime}})$ & $\rang(\mathfrak{q}_{\pi^{\prime}, \pi^{\prime\prime}})$ \\
\hline
&$A_{1}^{4}$& $\{\alpha_2,\alpha_3,\alpha_6,\alpha_8\}$ & 1 & 0\\
\hline
&$A_{1}^{3}\times A_{2}$& $\{\alpha_1,\alpha_2, \alpha_3,\alpha_6,\alpha_8\}$ & 1 & 1\\
\hline
&$A_{1}^{3}\times A_{2}$& $\{\alpha_2,\alpha_3, \alpha_5,\alpha_6,\alpha_8\}$ &  1& 0\\
\hline
&$A_{1}^{2}\times A_{2}^{2}$& $\{\alpha_1,\alpha_2, \alpha_3,\alpha_5,\alpha_6,\alpha_8\}$ & 1 &1 \\
\hline
&$A_{1}^{2}\times A_{3}$&$\{\alpha_2,\alpha_3,\alpha_6,\alpha_7,\alpha_8\}$ & 1 &0 \\
\hline
&$A_{1}^{2}\times A_{3}$&$\{\alpha_2,\alpha_3,\alpha_5,\alpha_6,\alpha_7\}$ & 1 & 0\\
\hline
&$A_{1}^{2}\times A_{4}$& $\{\alpha_1,\alpha_2, \alpha_3,\alpha_4,\alpha_6,\alpha_8\}$ & 1 & 1\\
\hline
&$A_{1}^{2}\times A_{4}$& $\{\alpha_2,\alpha_3, \alpha_5,\alpha_6,\alpha_7,\alpha_8\}$ & 1 & 0\\
\hline
&$A_{1}\times A_{2}\times A_{3}$&$\{\alpha_1,\alpha_2,\alpha_3,\alpha_5,\alpha_6,\alpha_7\}$ & 1 & 1\\
\hline
&$A_{1}\times A_{2}\times A_{3}$&$\{\alpha_1,\alpha_2,\alpha_3,\alpha_6,\alpha_7,\alpha_8\}$ & 1 & 1\\
\hline
&$A_{1}\times A_{2}\times A_{4}$&$\{\alpha_1,\alpha_2,\alpha_3,\alpha_5,\alpha_6,\alpha_7,\alpha_8\}$ & 1 & 1\\
\hline
&$A_{3}\times A_{4}$&$\{\alpha_1,\alpha_2,\alpha_3,\alpha_4,\alpha_6,\alpha_7,\alpha_8\}$ & 1 & 1\\
\hline
&$A_{7}$&$\{\alpha_1, \alpha_3,\alpha_4,\alpha_5,\alpha_6,\alpha_7,\alpha_8\}$ & 1 & 1\\
\hline
\end{tabular}\\
\\
\\
4. Supposons maintenant  $\pi^{\prime}$ de type $E_{6}\times A_{1}$. Ici, on a  $\pi^{\prime}=\{\alpha_1,\alpha_{2},\alpha_{3},\alpha_{4},\alpha_5,\alpha_{6},\alpha_8\}$ et $\mathcal{E}_{\pi^{\prime}}=\{\alpha_1+2\alpha_{2}+2\alpha_{3}+3\alpha_{4}+2\alpha_{5}+\alpha_6,\alpha_{1}+\alpha_{3}+\alpha_{4}+\alpha_{5}+\alpha_{6},
\alpha_{3}+\alpha_{4}+\alpha_{5},\alpha_4,\alpha_8\}$:\\
\\
\begin{tabular}{|l|l|l|l|l|l|c|c|c|}
\hline
&$Type$ & $ \pi^{\prime\prime}$ & $\ind(\mathfrak{q}_{\pi^{\prime}, \pi^{\prime\prime}})$ & $\rang(\mathfrak{q}_{\pi^{\prime}, \pi^{\prime\prime}})$ \\
\hline
&$A_{1}^{4}$& $\{\alpha_1,\alpha_2,\alpha_5,\alpha_7\}$ & 1 & 0\\
\hline
&$A_{1}^{3}\times A_{2}$& $\{\alpha_1,\alpha_2, \alpha_5,\alpha_7,\alpha_8\}$ & 1 & 0\\
\hline
&$A_{1}^{3}\times A_{2}$& $\{\alpha_1,\alpha_2, \alpha_3,\alpha_5,\alpha_7\}$ & 1 & 0\\
\hline
&$A_{1}^{2}\times A_{2}^{2}$& $\{\alpha_1,\alpha_2, \alpha_3,\alpha_5,\alpha_7,\alpha_8\}$ &1  &0 \\
\hline
&$A_{1}^{2}\times A_{3}$&$\{\alpha_2,\alpha_3,\alpha_6,\alpha_7,\alpha_8\}$ & 1 & 0\\
\hline
&$A_{1}^{2}\times A_{3}$&$\{\alpha_2,\alpha_3,\alpha_5,\alpha_6,\alpha_7\}$ &1  & 0\\
\hline
&$A_{1}\times A_{2}\times A_{3}$& $\{\alpha_1,\alpha_2, \alpha_3,\alpha_6,\alpha_7,\alpha_8\}$ & 1 &0 \\
\hline
&$A_{1}\times A_{2}\times A_{3}$&$\{\alpha_1,\alpha_2,\alpha_3,\alpha_5,\alpha_6,\alpha_7\}$ &1  & 0\\
\hline
&$A_{1}^{2}\times A_{4}$&$\{\alpha_1,\alpha_2,\alpha_5,\alpha_6,\alpha_7,\alpha_8\}$ &  1& 0\\
\hline
&$A_{1}\times A_{2}\times A_{4}$&$\{\alpha_1,\alpha_2,\alpha_3,\alpha_5,\alpha_6,\alpha_7,\alpha_8\}$ & 1 & 0\\
\hline
&$A_{1}\times A_{5}$&$\{\alpha_1,\alpha_2,\alpha_4,\alpha_5,\alpha_6,\alpha_7\}$ & 1 & 1\\
\hline
&$A_{3}\times A_{3}$&$\{\alpha_1,\alpha_3,\alpha_4,\alpha_6,\alpha_7,\alpha_8\}$ & 1 & 1\\
\hline
&$A_{3}\times A_{4}$&$\{\alpha_1,\alpha_2,\alpha_3,\alpha_4,\alpha_6,\alpha_7,\alpha_8\}$ & 1 & 0\\
\hline
&$A_{7}$&$\{\alpha_1, \alpha_3,\alpha_4,\alpha_5,\alpha_6,\alpha_7,\alpha_8\}$ & 1 & 1\\
\hline
\end{tabular}\\

\bibliographystyle{smfplain}
\bibliography{Stability for biparabolic subalgebras of simple Lie algebras}

\end{document}